\documentclass[graybox]{svmult}

\usepackage{type1cm}        
\usepackage{makeidx}         
\usepackage{graphicx}        
\usepackage{multicol}        
\usepackage[bottom]{footmisc}

\usepackage{amsfonts}
\usepackage{mathrsfs}

\usepackage{newtxtext}       %
\usepackage[varvw]{newtxmath}       

\usepackage{tikz}

\usepackage{mathdots}

\usetikzlibrary{arrows.meta}

\def\N{\mathbb{N}}
\def\Z{\mathbb{Z}}
\def\R{\mathbb{R}}

\def\B{\mathcal{B}}
\def\F{\mathcal{F}}
\def\G{\mathcal{G}}

\def\PP{\mathcal{P}}

\def\U{\mathcal{U}}
\def\V{\mathcal{V}}
\def\W{\mathcal{W}}

\def\UU{\mathfrak{U}}

\def\s{\mathfrak{s}}

\def\hN{{}^*\N}

\def\hA{{}^*\!A}
\def\hB{{}^*B}
\def\hX{{}^*X}
\def\hf{{}^*\!f}
\def\hg{{}^*g}

\def\ueq{{\,{\sim}_{{}_{\!\!\!\!\! u}}\;}}

\def\fe{\le_{\textrm{fe}}}

\makeatletter
\newcommand{\ostar}{\mathbin{\mathpalette\make@circled\star}}
\newcommand{\make@circled}[2]{%
  \ooalign{$\m@th#1\smallbigcirc{#1}$\cr\hidewidth$\m@th#1#2$\hidewidth\cr}%
}
\newcommand{\smallbigcirc}[1]{%
  \vcenter{\hbox{\scalebox{0.77778}{$\m@th#1\bigcirc$}}}%
}
\makeatother

\makeindex             

\begin{document}

\title*{Hypernatural numbers in
arithmetic Ramsey theory}
\author{Mauro Di Nasso\orcidID{0000-0001-6103-9775} }
\institute{Mauro Di Nasso \at Universit\`a di Pisa, Italy, \email{mauro.di.nasso@unipi.it}}
%
%
\maketitle


\abstract{
The hypernatural numbers $\hN$ of nonstandard analysis
have recently proven to be an effective tool in arithmetic Ramsey theory.
After introducing the fundamental ``nonstandard" notions,
we present several examples to illustrate the use of this technique in practice. 
In particular, we provide brief nonstandard proofs of some recent results 
concerning the monochromaticity of certain families of finite and infinite configurations.}

\section*{Introduction}

In recent years, the use of nonstandard models of natural numbers
has proved useful in the study of problems in 
combinatorics of numbers (see the monograph \cite{dgl} and references therein).
In particular, several applications have been found in 
the area called ``arithmetic Ramsey theory"
which focuses on the existence, for any finite coloring (partition)
of the natural numbers, of monochromatic patterns defined 
by arithmetic operations.

This article aims to introduce the hypernatural numbers $\hN$ of 
nonstandard analysis at a basic level, present their fundamental properties, 
and show a series of applications to problems in arithmetic Ramsey theory. 

After preliminary definitions and properties, the fundamental notions of largeness,
namely thickness, syndeticity, and piecewise syndeticity, are presented 
in the nonstandard settings, where their topological content is made explicit.
We recall the notion of finite embeddability between sets of natural numbers,
and show that it admits a simple and natural characterization in nonstandard terms.

The tools of $u$-equivalence and iterated hyper-extensions are then
introduced, and used to give brief nonstandard proofs
of several recent theorems on the existence or nonexistence of certain 
monochromatic patterns. 
For example, we provide short nonstandard proofs
of the following results:\footnote
{See Theorems \ref{quasi-Pythagoras}, \ref{exponential}, \ref{sumsquotients},
and Corollary \ref{cor-3sumproducts}.}
\begin{itemize}
\item
\emph{There exists a finite coloring of $\N$ such that
no configuration of the form $x, y, x^2+y^2$ is monochromatic.}
\item
\emph{For every finite coloring of $\N$ there exists a monochromatic 
``exponential" configuration of the form $x, y, y^x$.}
\item
\emph{For every finite coloring of $\N$ there exists a monochromatic ``sum and quotient"
configuration $x, y, x+y, \frac{y}{x}$.}
\item
\emph{For every finite coloring of $\N$ there exists a monochromatic
configuration of the form
$x,y,z,x+y,x\cdot z,y\cdot z,x\cdot z+y\cdot z$ where $x<y<z$.}
\end{itemize}

We also introduce ``Ramsey pairs," a nonstandard notion that has proven useful 
in studying the monochromaticity of certain infinite configurations. 
In this regard, we provide brief proofs of the following:\footnote
{See Theorems \ref{differenceRamsey}, \ref{2n+m}, and \ref{n^2+m}.}
\begin{itemize}
\item
\emph{For every finite coloring of $\N$ there exists an increasing sequence $(a_n)_{n\in\N}$ such
that $a_n$ and $a_m-a_n$ are monochromatic for all $n<m$.}
\item
\emph{There exists a finite coloring of $\N$ such that for every increasing sequence
$(a_n)_{n\in\N}$ the elements $a_n, 2a_n+a_m$ are \emph{not} all monochromatic for $n<m$.}
\item
\emph{There exists a finite coloring of $\N$ such that for every increasing sequence
$(a_n)_{n\in\N}$ the elements $a_n, a_n^2+a_m$ are \emph{not} all monochromatic for $n<m$.}
\end{itemize}

The last two results answered negatively to open questions posed in \cite{kmrr}.

\smallskip
There are strong connections between the use of hypernatural numbers
and iterated hyper-extensions, and algebra in the space of ultrafilters over $\N$;
this point will be clarified in this paper. 
However, here we assume no prior knowledge, so also readers that
are unfamiliar with ultrafilters, or simply dislike them, are encouraged to read
all the main parts of this article.

\smallskip
I thank the anonymous referee for carefully reading this article,
and for the valuable comments that helped improve several
aspects of the presentation.

\smallskip
This article is dedicated to Vitaly Bergelson on the occasion of his 75th birthday. 
He is a big fan of ultrafilters; I hope he can also appreciate hypernatural numbers, 
since they are, after all, ultrafilters in disguise.

\section{The hypernatural numbers $\hN$}

As a preliminary remark, I would like to address those researchers 
who are not familiar with nonstandard analysis and who have often
a somewhat diffident approach towards its methods.
The point I would like to emphasize is that, ultimately, hypernatural numbers 
are intuitively easy to understand, since in most respects they closely resemble 
familiar natural numbers.
They are endowed with addition and product operations and a total order, 
such that $(\hN,+,\cdot,<)$ is a discretely ordered commutative semiring that 
extends that of the natural numbers. The usual ``finite" numbers $\N$ are 
just an initial segment of $\hN$, which also contains ``infinite" numbers 
greater than all finite numbers.
Furthermore, hypernatural numbers possess many more properties, 
beyond that of being a semiring, that make them similar to natural numbers.
Indeed, the following general principle holds:
\begin{itemize}
\item
\emph{Extension principle}:\
All subsets $A\subseteq\N^d$ 
and all functions $f:\N^d\to\N$ have ``coherent" hyper-extensions ${}^*A\subseteq\hN^d$
and ${}^*f:\hN^d\to\hN$, respectively, where the set ${}^*A$ is a superset of $A$,
and the function ${}^*f$ is an extension of $f$.
More generally, all mathematical objects $X$ have
``coherent" hyper-extensions $\hX$.

It is assumed that hyper-extensions of numbers are trivial, \emph{i.e.},
${}^*n=n$ for every $n\in\N$, and that $\hN$ is a proper superset of $\N$.
\end{itemize}

We used the word ``coherent" to mean that every ``elementary property"
of the considered objects holds if and only if the same property holds for the corresponding
hyper-extensions. This is the content of the following

\begin{itemize}
\item
\emph{Transfer principle}:\
For every $X_1,\ldots,X_n$ and for
every ``elementary property" $P$:
$$P(X_1,\ldots,X_k)\Longleftrightarrow P(\hX_1,\ldots,\hX_n)$$
\end{itemize}

By ``elementary property" we mean any property that talks about elements
of the considered objects, but not about subsets or higher order objects.\footnote
{For instance, the \emph{well-ordering principle} of the natural numbers is
not an elementary property because it talks about subsets, and in fact
it holds for $\N$ but fails in $\hN$. To see this, observe that if a number $\nu\in\hN$
is infinite then also its predecessor $\nu-1$ is infinite, and hence the set 
of infinite numbers doesn't have a least element.}

Sometimes, we will use the expression ``by backward \emph{transfer}" to mean that
we are using the right-to-left implication ``$\Longleftarrow$" in the equivalence above.

\smallskip
Nonstandard analysis typically focuses on the \emph{continuum} setting,
and its starting point is the set ${}^*\R$ of hyperreal numbers, 
which is an ordered field that properly extends the real line,
and hence contains infinitesimal and infinite numbers.
In more advanced topics one also
considers hyper-extensions of topological spaces, groups, rings,
spaces of functions, metric spaces, and so forth
(see the volume \cite{lw} and references therein
for a general overview of nonstandard analysis).

Here we focus on the \emph{discrete} setting, and
our basic nonstandard framework will be given by the hypernatural numbers $\hN$.

Below are a few basic properties of hypernatural numbers.

\begin{proposition}
\

\begin{enumerate}
\item
The ``finite" natural numbers are an initial segment of the hypernatural numbers, \emph{i.e.}, 
if $\nu\in\hN\setminus\N$ then $\nu>n$ for every $n\in\N$.
\item
Let $A\subseteq\N$. Then $A=\hA\cap\N$.
\item
Let $A\subseteq\N$. Then $A$ if finite if and only if $A=\hA$.
\end{enumerate}
\end{proposition}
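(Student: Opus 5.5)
All three items follow from the transfer principle applied to elementary statements about individual numbers and finite lists. We also use two facts from the extension principle: ${}^*n=n$ for $n\in\N$, and $\hN\supsetneq\N$.

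\emph{Item (1).} Fix $n\in\N$. In $\N$ the elementary property ``for every $x\in\N$, either $x=0$ or $x=1$ or \dots\ or $x=n$, or else $x>n$'' holds. Transfer gives that every $\nu\in\hN$ satisfies $\nu=0\vee\dots\vee\nu=n\vee\nu>n$; here we use ${}^*k=k$ and that ${}^*\!<$ extends $<$, which is what we mean by writing $<$ on $\hN$. If $\nu\notin\N$, none of the equalities can hold, so $\nu>n$. Since $n$ was arbitrary, $\nu$ exceeds every finite number.

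\emph{Item (2).} The inclusion $A\subseteq\hA\cap\N$ is immediate because $\hA\supseteq A$. For the converse, take $n\in\N\setminus A$. Then ``$n\notin A$'' is an elementary property of $n$ and $A$. By transfer, ${}^*n\notin\hA$, that is, $n\notin\hA$. Hence $\hA\cap\N\subseteq A$.

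\emph{Item (3).} Suppose $A=\{a_1,\dots,a_k\}$ is finite. The property ``for all $x$, $x\in A\leftrightarrow x=a_1\vee\dots\vee x=a_k$'' is elementary. Transfer yields $\hA=\{{}^*a_1,\dots,{}^*a_k\}=\{a_1,\dots,a_k\}=A$; the case $A=\emptyset$ is included by transferring ``$\forall x\,(x\notin A)$''. Conversely, suppose $A$ is infinite, so $A$ is unbounded. The property ``for every $x\in\N$ there is $y\in A$ with $y>x$'' is elementary, so by transfer it holds for $\hN$ and $\hA$. Pick any $\nu\in\hN\setminus\N$, which exists because $\hN\supsetneq\N$. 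There is then some $\mu\in\hA$ with $\mu>\nu$. By item (1), $\nu$ is larger than every finite number, so $\mu>\nu$ forces $\mu\notin\N$. Thus $\hA\neq A$.

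The main point needing care is item (1). The statement ``$\nu>n$ for all $n$'' is not itself transferable, since it quantifies over all of $\N$ rather than over $\hN$. The trick is to fix $n$ first and transfer the finite disjunction. After that, (2) and (3) follow routinely.
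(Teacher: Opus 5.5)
Your proof is correct. For items (1) and (2), and for the implication ``$A$ finite $\Rightarrow A=\hA$'' in (3), it is the paper's argument. The paper phrases (2) as the single chain $n\in A\leftrightarrow{}^*n\in\hA\leftrightarrow n\in\hA$, which covers your two inclusions at once.

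The one genuine difference is the converse in (3).

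\emph{Paper's route.} The paper fixes a bijection $f:\N\to A$ and transfers bijectivity to $\hf:\hN\to\hA$. Since $\hf$ is injective and agrees with $f$ on $\N$, every $\hf(\nu)$ with $\nu$ infinite lies in $\hA\setminus A$.

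\emph{Your route.} You instead transfer the unboundedness of $A$. You then combine it with item (1), and implicitly with the transferred asymmetry of the order, to obtain some $\mu\in\hA$ lying above an infinite $\nu$.

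\emph{Comparison.} Your argument needs only a simple $\forall\exists$ statement about the order, and it gives the extra information that $\hA$ is cofinal in $\hN$. The paper's argument does not use item (1) or the ordering at all, and it shows exactly that $\hA\setminus A=\hf(\hN\setminus\N)$. It also adapts directly, using an injection $\N\to A$, to infinite sets $A$ that are not subsets of $\N$.
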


\begin{proof}
(1). For every $n\in\N$, consider 
the formula ``$\forall x\in\N\ (x\ne 1\land \ldots\land x\ne n)\to x>n$".
By \emph{transfer}, we obtain that ``$\forall x\in\hN\ (x\ne 1\land \ldots\land x\ne n)\to x>n$", and so,
if $\nu\in\hN\setminus\N$ then $\nu>n$.

(2) is obtained by the following chain of equivalences that hold for every $n\in\N$:
``$n\in A\leftrightarrow {}^*n\in\hA\leftrightarrow n\in \hA$" (recall that ${}^*n=n$ for every $n\in\N$).

\smallskip
(3). Let $A=\{a_1,\ldots,a_k\}$ be finite. By \emph{transfer} from
$$``\forall x\in\N\ (x\in A\leftrightarrow (x=a_1\lor\ldots\lor x=a_k))"$$
we obtain:
``$\forall x\in\hN\ (x\in\hA\leftrightarrow (x=a_1\lor\ldots\lor x=a_k))$."
(Recall that we are assuming ${}^*a=a$ for every $a\in\N$.)

Conversely, given an infinite $A\subseteq\N$, pick a bijective function $f:\N\to A$. 
Then $\hf:\hN\to\hA$ is also bijective.
Since $\hf$ is an extension of $f$, all images $\hf(\nu)$ where
$\nu$ is infinite belong to $\hA\setminus\{f(n)\mid n\in\N\}=\hA\setminus A$, 
and so $A$ is a proper subset of $\hA$.
\end{proof}

Other simple direct consequences of \emph{transfer} are itemized below,
where $A$ and $B$ are arbitrary sets.\footnote
{To prove $(1)$ one applies \emph{transfer} to all formulas ``$a\in A$".
(2) is obtained from the equivalence: ``$(\exists x\ x\in A)\leftrightarrow(\exists x\ x\in\hA)$".
Properties (3), (4), (5), and (6) are proved by considering the formulas:
``$\forall x\ (x\in A\cap B\leftrightarrow (x\in A)\land(x\in B))$",
``$\forall x\ (x\in A\cup B\leftrightarrow (x\in A)\lor(x\in B))$", and
``$\forall x\ (x\in A\setminus B\leftrightarrow (x\in A)\land(x\notin B))$",
``$\forall x\ (x\in A\times B\leftrightarrow (\exists a\in A\ \exists b\in B\ x=(a,b)))$", respectively.
Finally, to prove (7) one applies \emph{transfer} to the formula:
``$\forall X\ (X\in\PP(A)\to(\forall x\in X\ x\in A))$".}
\begin{enumerate}
\item
$\{{}^*a\mid a\in A\}\subseteq\hA$.
\item
\emph{$A\ne\emptyset$ if and only if $\hA\ne\emptyset$.}
\item
${}^*(A\cap B)=\hA\cap\hB$.
\item
${}^*(A\cup B)=\hA\cup\hB$.
\item
${}^*(A\setminus B)=\hA\setminus\hB$.
\item
${}^*(A\times B)=\hA\times\hB$.
\item
${}^*\PP(A)\subseteq\PP(\hA)$.
\end{enumerate}

The last property is particularly relevant, and the elements of ${}^*\PP(A)$
are named \emph{internal subsets} of $\hA$.
For every infinite set $A$ one can show that the inclusion ${}^*\PP(A)\subsetneq\PP(\hA)$
is proper, and hence there exist subsets $X\subseteq\hA$ that are \emph{external},
\emph{i.e.} they are not internal. 

The importance of the internal subsets of $\hA$
is that they share the same elementary property as the subsets of $A$;
\emph{e.g.}, all nonempty internal subsets of $\hN$ have a least element,
and all bounded nonempty internal subsets of $\hN$ have a greatest element.
These two properties are easily proved by \emph{transfer}, using the elementary properties:
\begin{itemize}
\item
\emph{``$\forall X\in\PP(\N)\ (X\ne\emptyset\to X\ \text{has a least element})$"}, and
\item
\emph{``$\forall X\in\PP(\N)\ (X\ne\emptyset\land \exists y\in\N\,(\forall x\in X\ x\le y))\to X\ 
\text{has a greatest element}$"},
\end{itemize}
respectively. We remark that one cannot apply \emph{transfer} to
properties formalized in the form: ``$\forall X\subseteq\N \ldots$" or ``$\exists X\subseteq\N\ldots$", 
because formulas where quantifiers range over subsets are not elementary
(in fact, they are second-order formulas). 

Note that both the set $\hN\setminus\N$ of infinite hypernatural numbers
and the set $\N\subset\hN$ of finite hypernatural numbers are external, 
because the former does not have a least element, and the latter is bounded
but without a greatest element.

A relevant class of internal sets are the hyperfinite ones.
A set $F\subseteq\hN$ is \emph{hyperfinite} if $F$
belongs to the hyper-extension ${}^*\text{Fin}(\N)$ of the family of finite subsets.
Note that every finite $F=\{\nu_1,\ldots,\nu_k\}\subset\hN$ is hyperfinite; this can be easily verified
by applying \emph{transfer} to the property:
``$\forall x_1,\ldots,x_k\in\N\ \exists F\in\text{Fin}(\N)\ F=\{x_1,\ldots,x_k\}$."
However, the class of hyperfinite sets also contain infinite sets;
typical examples are the intervals $[\nu,\mu]=\{x\in\hN\mid \nu\le x\le\mu\}$ 
where $\mu-\nu\in\hN\setminus\N$ is infinite.
We observe that, with respect to elementary properties,
hyperfinite sets behave as finite sets.
For example, every nonempty hyperfinite $F\subset\hN$ has a least and a greatest element;
furthermore, it is possible to define
the \emph{hyper-sum} of its elements $\sum_{\nu\in F}\nu\in\hN$.

The notion of internal set is really useful but also a subtle one, and in the practice
it takes caution to distinguish between internal and external objects.
Since going in depth into this is not needed to develop the topics of this paper, we do not
elaborate here, and refer the interested reader to \cite[Chapter 2]{dgl} and references therein.

\smallskip
Many further properties are obtained as straightforward applications of the
\emph{transfer principle}. For example, a function $f:A\to B$ 
is 1-1, or onto, or bijective, if and only if its hyper-extension ${}^*f:\hA\to\hB$ is 1-1, 
or onto, or bijective, respectively;
and a binary relation $R$ on the set $X$ is an equivalence relation, or a partial order, 
or a linear order, if and only if its hyper-extension ${}^*R$ is an equivalence relation on $\hX$, 
or a partial order on $\hX$, or a linear order on $\hX$, respectively.\footnote
{As usual in set theory, 
here we identify each binary relation $R$ on $X$ with the set of pairs 
$\underline{R}=\{(x,y)\in X\times X\mid xRy\}$ that satisfy it, and then define 
the hyper-extension ${}^*R$ as the binary relation ${}^*\underline{R}\subseteq\hX\times\hX$.} 
Another consequence of \emph{transfer} that is commonly used in the 
practice is the following natural ``coherence" property of hyper-extensions:

\begin{proposition}
Let $\varphi(x,y_1,\ldots,y_k)$ be an elementary property.
For all $A,B_1,\ldots,B_k$ one has that
$${}^*\{x\in A\mid\varphi(x,B_1,\ldots,B_k)\}=\{x\in\hA\mid \varphi(x,\hB_1,\ldots,\hB_k)\}.$$
\end{proposition}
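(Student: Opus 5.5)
The plan is to apply \emph{transfer} once, to a single elementary sentence that characterizes the set $C=\{x\in A\mid\varphi(x,B_1,\ldots,B_k)\}$ in terms of its elements, exactly as in the footnote where ${}^*(A\cap B)=\hA\cap\hB$ and the similar identities were derived. Since $C$ is a mathematical object, the \emph{extension principle} gives it a hyper-extension ${}^*C$. The set $C$ satisfies
$$\forall x\ \bigl(x\in C\leftrightarrow (x\in A\land \varphi(x,B_1,\ldots,B_k))\bigr).$$
This is an elementary property of the objects $C,A,B_1,\ldots,B_k$, because $\varphi$ is elementary and the only other things involved are membership and a quantifier over elements. To keep the quantifier bounded, as in the paper's examples, I would write it as two bounded clauses: ``$\forall x\in C\ (x\in A\land\varphi(x,B_1,\ldots,B_k))$'' and ``$\forall x\in A\ (\varphi(x,B_1,\ldots,B_k)\to x\in C)$''.

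By \emph{transfer}, with each parameter replaced by its hyper-extension, we get
$$\forall x\in{}^*C\ (x\in\hA\land\varphi(x,\hB_1,\ldots,\hB_k))$$
and
$$\forall x\in\hA\ (\varphi(x,\hB_1,\ldots,\hB_k)\to x\in{}^*C).$$
The first sentence gives the inclusion ${}^*C\subseteq\{x\in\hA\mid\varphi(x,\hB_1,\ldots,\hB_k)\}$, and the second gives the reverse inclusion. Together they yield the claimed equality.

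The only delicate point is justifying that $\varphi$ stays elementary once $C$ is added as a parameter, and that transfer replaces every parameter, including $C$ itself, by its star. This is exactly how the \emph{transfer principle} was stated for properties $P(X_1,\ldots,X_n)$, so no further work should be needed. If $\varphi$ contains quantifiers, they must be bounded by sets among the parameters, and they transfer to quantifiers bounded by the corresponding hyper-extensions. This matches the convention already used above, where ``$\forall x\in\N$'' becomes ``$\forall x\in\hN$''.
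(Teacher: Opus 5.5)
Your proof is correct and matches the paper's argument: the paper also applies \emph{transfer} to the single sentence $\forall x\,(x\in C\leftrightarrow(x\in A\land\varphi(x,B_1,\ldots,B_k)))$, where $C$ is the set in question, and reads off the equality directly. Splitting it into two clauses with quantifiers bounded by $C$ and $A$ (which become ${}^*C$ and $\hA$) is only a more careful rendering of that same step.
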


\begin{proof}
Let $C:=\{x\in A\mid\varphi(x,B_1,\ldots,B_k)\}$.
By \emph{transfer} from ``$\forall x\ (x\in C\leftrightarrow (x\in A\land \varphi(x,B_1,\ldots,B_k))$"
we obtain that $\forall x\ (x\in{}^*C\leftrightarrow (x\in\hA\land \varphi(x,\hB_1,\ldots,\hB_k))$".
\end{proof}

For example, let $\text{Exp}:\N^2\to\N$ be the exponential function $(n,m)\mapsto m^n$.
If $A=\{m^n\mid m,n\in\N\}=\{k\in\N\mid\exists m,n\in\N\ \ k=\text{Exp}(m,n)\}$ is the set of powers in $\N$,
then $\hA=\{\lambda\in\hN\mid \exists \mu,\nu\in\hN\ \ \lambda={}^*\text{Exp}(\mu,\nu)\}=
\{\nu^\mu\mid \mu,\nu\in\hN\}$ is the set of powers in $\hN$. 

We do not review here all basic properties of this kind, as they are really natural to conceive
and really easy to prove by using \emph{transfer}. In any case, we will
always explicitly mention when an application of \emph{transfer} is needed to justify our arguments.

\smallskip
Besides \emph{extension} and \emph{transfer}, there is a third principle that
is used in nonstandard analysis, and that originates from 
the \emph{compactness theorem} of first-order logic.
Roughly, the latter states that if one finds elements that realize any finite subcollection of a 
given family of formulas,
then there must also be an element that simultaneously satisfies all of them,
possibly in some larger ``universe."
Informally, one could say that, by logic compactness, every object that 
potentially exists must actually exist in some coherent universe.
A typical example is obtained by considering the family of formulas $\Phi:=\{x>n\mid n\in\N\}$.
It is readily seen that every finite subfamily of $\Phi$ is realized by any
sufficiently large natural number $x$.
Then, by \emph{logic compactness}, there must be an element $\nu$ that simultaneously 
satisfies all formulas in $\Phi$; in fact, such elements $\nu$ are the infinite numbers
in the hyper-extension $\hN$.

The following \emph{enlarging property} can be seen as
a version of logic compactness, conveniently
formulated as a simple intersection property.
There is a limitation on the size of the considered families that can be
any prescribed cardinal; to our purposes the
cardinality of the continuum $\mathfrak{c}=|\R|=|\mathcal{P}(\N)|$ will be large enough.
\begin{itemize}
\item
\emph{$\mathfrak{c}$-Enlarging property}:\
Let $\G=\{A_i\mid i\in I\}$ be a family of sets where $|I|\le\mathfrak{c}$.
If $\G$ has the \emph{finite intersection property}, \emph{i.e.}
if $A_{i_1}\cap\ldots\cap A_{i_n}\ne\emptyset$ for all $A_{i_1},\ldots,A_{i_n}\in\G$,
then the intersection of all hyper-extensions $\bigcap_{i\in I}\hA_i\ne\emptyset$.
\end{itemize}

For instance, the application of logic compactness considered above can be reformulated
as follows. The countable family $\G=:\{(n,+\infty)\mid n\in\N\}$ of subsets of $\N$ has the
finite intersection property; then, by \emph{enlarging}, 
the intersection $\bigcap_{n\in\N}{}^*(n,+\infty)$ is nonempty, and in fact it
equals the set $\hN\setminus\N$ of infinite hypernatural numbers.

\smallskip
At this point, a question naturally arises: ``How do we know that all this is actually possible?” 
For a logician, probably the simplest way is to apply the \emph{compactness theorem} of first-order logic. 
However, there is a simple direct construction of numbers $\hN$ that
satisfy the three principles of \emph{extension}, \emph{transfer},
and \emph{enlarging}, namely an appropriate \emph{ultrapower} of $\N$. 
This is the content of the last Section \ref{sec-ultrapower}.

\smallskip
It is now time to clarify one point. 
In our opinion -- which is not actually a common view in the nonstandard community -- researchers 
who are still in the process of learning the basics of nonstandard analysis
should first practice, and only then examine the ultrapower construction of $\hN$ or ${}^*\R$. 
In fact, while checking every single property within the ultrapower may give 
the impression of greater security and control, it can often be tedious and 
significantly slows down the work; moreover, it is completely unnecessary.
When studying calculus, one usually first becomes familiar with the algebraic properties 
of real numbers and with completeness, before moving on to their construction as Dedekind cuts or 
as equivalence classes of Cauchy sequences of rational numbers. 
Indeed, in practice the properties of calculus are not verified by looking directly at Dedekind cuts; 
all a beginner needs to know is that real numbers are a complete ordered field.
Similarly, all one needs to know to start working with nonstandard methods
are the three principles mentioned above: \emph{extension}, \emph{transfer}, and \emph{enlargement}. 
Of course, at some point a rigorous justification must be provided, and this is done by 
showing the existence of a model, typically an ultrapower. 
But to begin with, one can simply take those three principles as black box assumptions,
in the same way that first-year calculus students take the existence of a complete ordered field for granted.

\smallskip
To help the intuition, one might just think of the hypernatural numbers $\hN$
as a sort of telescopic view of the natural numbers $\N$ where one is also able
to see that, in fact, there are also ``infinite" numbers which come after all the finite ones.
The relevant aspect here is that all familiar notions remain the same: for example,
there are ${}^*$even and ${}^*$odd hypernatural numbers,
there are ${}^*$prime hypernatural numbers, and so forth.
Also, all ``elementary" properties remain the same: for example,
every hypernatural numbers is the sum of at most four square numbers;
for every $\nu\in\hN$ there is a ${}^*$prime number $\xi\in\hN$ with $\nu<\xi<2\nu$;
and so forth.

\smallskip
Before proceeding, a disclaimer is in order. 
Our formulation of the \emph{transfer principle} is not
a fully rigorous one, as it relies on the notion of ``elementary property"
which was not given a precise formulation (this would require the formal notion
of formula of first-order logic).
Besides, more advanced notions such as that of hyperfinite set, 
that are commonly used in the practice of nonstandard methods,
have not been introduced.
However, our intent here is to keep things as simple as possible, and only
present those notions and properties that are actually needed for the purposes of this paper.
The interested reader is referred to other more comprehensive introductions
(see, \emph{e.g.}, \cite[Chapter 2]{dgl} as an intermediate step, and
the volume \cite{lw} for an all-embracing overview of nonstandard methods.)

\smallskip
At this point, we believe the reader can immediately begin to see the 
nonstandard methods in action.
If necessary, they can return to this section whenever they feel the need 
to further formalize the topics presented, or if something is unclear.
In short, they can proceed as if they were learning a new language:
After assimilating the basic rules and words, rather than spending a lot of time 
studying grammar in depth, it can be helpful to immerse yourself in practice 
and try to understand a conversation or pronounce a few initial sentences, 
returning occasionally to review the fundamentals.

\section{Thickness and syndeticity in $\hN$}

In this section we will recall fundamental notions of ``largeness" that are
used in combinatorics of numbers, namely thickness and syndeticity,
and show how the nonstandard setting of the hypernatural numbers $\hN$
formalizes the intuition that they can be seen as topological notions.

\smallskip
In the following, $\N$ will denote the set of positive integers,
and $\N_0=\N\cup\{0\}$ the set of non-negative integers.
By interval $I$ of $\N$ we mean a finite set of the form 
$$[x,y]=\{n\in\N\mid x\le n\le y\}$$
as determined by two elements $x\le y$ of $\N$. Note that we do not consider ``improper" intervals
of the form $[x,+\infty)=\{n\in\N\mid n\ge x\}$, unless explicitly stated.
Similarly, when saying that $I$ is an interval of $\hN$
we mean that $I=[\xi,\eta]=\{\nu\in\hN\mid \xi\le\nu\le\eta\}$ for some 
appropriate elements $\xi\le\eta$ in $\hN$, and we do not
consider improper intervals of the form $[\xi,+\infty)=\{\nu\in\hN\mid\nu\ge\xi\}$.
Note that, since $\N$ is an initial segment of $\hN$, 
every interval of $\N$ is also an interval of $\hN$.

\smallskip
The notion of length for intervals of $\N$ is extended to intervals of $\hN$ in a natural way.

\begin{definition}
The \emph{length} of an interval $I=[\xi,\eta]\subset\hN$ 
is the hypernatural number $|I|:=\eta-\xi+1$.
We say that an interval $I\subset\hN$ is a \emph{finite interval} if
its length $|I|\in\N$ is a finite number, and say that $I$
is an \emph{infinite interval} if the length $|I|\in\hN\setminus\N$ is an infinite number.
\end{definition}

Note that an interval $I\subset\hN$ is an infinite interval if and only if 
it is an infinite set, and so the two uses of the same word ``infinite" are coherent here.

\smallskip
Let us now recall the following basic notions of largeness for sets of natural numbers.

\begin{definition}
Let $A\subseteq\N$.
\begin{itemize}
\item
$A$ is \emph{thick} if it includes arbitrarily long intervals.
\item
$A$ is \emph{syndetic} if there exists $k$ such that every interval of length $k$ meets $A$,
\emph{i.e.}, $A\ne\emptyset$ and consecutive elements of $A$ have distance at most $k$.
\item
$A$ is \emph{piecewise syndetic} if $A=T\cap S$ is the intersection of a thick set $T$
with a piecewise syndetic set $S$.
\item
$A$ is \emph{syndetically thick} if for every $n$, the set of initial points of intervals
of length $n$ that are included in $A$ is syndetic.\footnote
{In the literature, the name ``thickly syndetic" is often used to refer to these sets.
We think this may be misleading, because it suggests that 
syndeticity is realized in a ``thick" manner, \emph{i.e.}, that syndeticity is realized on a thick set;
but this is actually the notion of piecewise syndeticity.
On the other hand, ``syndetically thick'' suggests that thickness
is realized in a ``syndetic" manner, and this description seems to fit the given definition better.}
\end{itemize}
\end{definition}

It is easily seen that thickness and syndeticity are dual notions, \emph{i.e.},
a set $A$ is thick if and only if the complement $A^c:=\N\setminus A$ is not syndetic.
Similarly, also piecewise syndeticity and syndetic thickness are dual notions,
\emph{i.e.}, $A$ is piecewise syndetic if and only if $A^c$ is not syndetically thick.

In a straightforward way, one proves that the above definitions admit
several equivalent reformulations.
For our purposes, the relevant ones are the following:

\begin{proposition}
Let $A\subseteq\N$. 
\begin{enumerate}
\item
$A$ is piecewise syndetic if and only if there exists $k$
such that for every $n$ there exists an interval $I$ of length $n$ 
such that for every subinterval $J\subseteq I$ of length $k$ one has $A\cap J\ne\emptyset$.
\item
$A$ is syndetically thick if and only if for every $n$ there exists $k$
such that for every interval $I$ of length $k$ there exists a point $x\in I$
such that the interval $[x,x+n)\subseteq A$.
\end{enumerate}
\end{proposition}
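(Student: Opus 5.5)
Note first that the definition of piecewise syndetic in the text reads ``$T\cap S$ with $T$ thick and $S$ piecewise syndetic''; this is clearly a slip for ``$S$ \emph{syndetic}''. I will prove the equivalences with that reading. Both parts are elementary and standard (no nonstandard tools are needed), and I would prove (1) directly from the definitions and then obtain (2) by duality.

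For (1), left to right: suppose $A=T\cap S$ with $T$ thick and $S$ syndetic with constant $k$, so every interval of length $k$ meets $S$. Given $n$, thickness of $T$ gives an interval $I\subseteq T$ of length $n$. Every subinterval $J\subseteq I$ of length $k$ meets $S$, and since $J\subseteq T$ it meets $T\cap S=A$.

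For (1), right to left: fix $k$ as in the hypothesis and, for each $n$, an interval $I_n$ of length $n$ all of whose length-$k$ subintervals meet $A$. Passing to a subsequence, we may assume the $I_n$ are pairwise disjoint and increasing, with gaps between them. Put $T:=\bigcup_n I_n$, which is thick, and $S:=A\cup(\N\setminus T)$. Then $T\cap S=T\cap A$. To get $A$ itself rather than $T\cap A$, enlarge instead: let $T:=A\cup\bigcup_n I_n$, which is still thick, and $S:=A\cup(\N\setminus\bigcup_n I_n)$, so that $T\cap S=A$. It remains to check that $S$ is syndetic with constant $2k$ (any constant at least $k$ suffices). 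Take an interval $J$ of length $2k$. If $J$ contains a point outside $\bigcup I_n$, it meets $S$. Otherwise $J\subseteq\bigcup I_n$. Since the $I_n$ are separated by gaps, $J$ lies inside a single $I_n$, hence contains a length-$k$ subinterval of $I_n$, which meets $A\subseteq S$. This gap bookkeeping is the only delicate point: it requires choosing the $I_n$ with $\min I_{n+1}>\max I_n+1$, which is possible because only arbitrarily long intervals are needed.

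For (2), I would use the stated duality: $A$ is syndetically thick iff $A^c$ is not piecewise syndetic. Negating (1) for $A^c$ gives: for every $k$ there exists $n$ such that every interval $I$ of length $n$ contains a subinterval $J$ of length $k$ with $J\subseteq A$. Renaming the variables ($k\leftrightarrow n$), this is exactly the statement that for every $n$ there is $k$ such that every interval of length $k$ contains some $x$ with $[x,x+n)\subseteq A$.

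If one prefers not to rely on the unproved duality claim, (2) can also be checked directly from the definition. Syndeticity of the set of starting points $x$ with $[x,x+n)\subseteq A$, with constant $m$, means every interval of length $m$ contains such an $x$. One then takes $k=m$, up to the boundary issue that $[x,x+n)$ need not lie inside $I$; the statement only requires $x\in I$, so this issue does not arise. The converse is immediate.
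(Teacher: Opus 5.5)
Your argument is correct in substance. It is the routine definition-chasing the paper has in mind; the paper gives no proof at all and only remarks that these reformulations are obtained ``in a straightforward way''. You are also right that ``piecewise syndetic set $S$'' in the definition must be read as ``syndetic set $S$''.

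Two details need tightening.

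First, in the right-to-left direction of (1), ``passing to a subsequence'' cannot in general make the $I_n$ pairwise disjoint. For $A=\N$ the given witnesses might be $I_n=[1,n]$, and no two of these are disjoint. What actually makes your choice with $\min I_{n+1}>\max I_n+1$ possible is heredity: the property ``every subinterval of length $k$ meets $A$'' passes from an interval to all of its subintervals, since every length-$k$ window of a subinterval is a length-$k$ window of the original. So take a good interval of length $\max I_n+n+2$, and let $I_{n+1}$ be its terminal segment of length $n+1$. With this repair the rest of your construction goes through: $T=A\cup\bigcup_n I_n$ and $S=A\cup(\N\setminus\bigcup_n I_n)$ satisfy $T\cap S=A$, and every window of length $2k$ inside $\bigcup_n I_n$ lies in a single $I_n$.

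Second, in (2), negating (1) for $A^c$ yields ``every interval of length $k$ contains a \emph{subinterval} of length $n$ included in $A$''. This is stronger than the stated condition, which only requires $x\in I$. Recovering it from the stated condition requires replacing $k$ by $k+n-1$, so the two are not ``exactly'' the same. Your last paragraph already makes the duality detour unnecessary, though. Condition (2) is literally the definition of syndetic thickness with the syndeticity of $\{x\mid [x,x+n)\subseteq A\}$ unfolded, so that observation is the whole proof. It also avoids relying on the unproved duality claim.
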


The nonstandard characterizations below seem to
exactly match the intuition behind the considered notions.

\begin{theorem}\label{thickness-syndeticity}
Let $A\subseteq\N$.
\begin{enumerate}
\item
$A$ is thick if and only if there exists an infinite interval $I\subseteq\hA$
if and only if for every $\nu\in\hN$ there exists an interval $I\subseteq\hA$ of length $\nu$.
\item
$A$ is syndetic if and only if 
$\hA$ has only finite gaps, \emph{i.e.}, $\hA\cap I\ne\emptyset$ for every infinite interval $I$,
if and only if $\hA\ne\emptyset$
and consecutive elements of $\hA$ have finite distance.
\item
$A$ is piecewise syndetic if and only if there exists an infinite interval $I\subseteq\hN$
such that $\hA\cap I$ has only finite gaps, \emph{i.e.},
$\hA\cap J\ne\emptyset$ for every infinite sub-interval $J\subseteq I$,
if and only if $\hA\cap I\ne\emptyset$ and consecutive elements in $\hA\cap I$ have finite distance.
\item
$A$ is syndetically thick if and only if for every infinite interval $J\subset\hN$
there exists an infinite subinterval $J\subseteq I$ such that $J\subseteq\hA$.
\end{enumerate}
\end{theorem}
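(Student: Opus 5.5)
The plan is to prove each item by pairing \emph{transfer}, for the direction from the standard property to $\hA$, with \emph{enlarging} or an overspill-type argument done by backward \emph{transfer}, for the converse. Throughout I use the elementary reformulations of the Proposition above, so that every property is a first-order statement about $A$ and $\N$.

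For (1): if $A$ is thick, then ``$\forall n\in\N\ \exists x\in\N\ [x,x+n)\subseteq A$'' holds. By \emph{transfer}, for every $\nu\in\hN$ there is $\xi\in\hN$ with $[\xi,\xi+\nu)\subseteq\hA$. Taking $\nu$ infinite gives an infinite interval. Conversely, suppose $[\xi,\eta]\subseteq\hA$ with $\eta-\xi$ infinite. Then for each finite $n$ the statement ``$\exists x\in\hN\ [x,x+n)\subseteq\hA$'' holds, and backward \emph{transfer} gives $\exists x\in\N\ [x,x+n)\subseteq A$.

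For (2): if $A$ is syndetic with constant $k$, \emph{transfer} ``every interval of length $k$ meets $A$''. Any infinite interval contains an interval of length $k$, so it meets $\hA$. Conversely, if $A$ is not syndetic, then for every $k$ some interval of length $k$ misses $A$. By \emph{transfer} with $k=\nu$ infinite, some interval of length $\nu$ misses $\hA$. The reformulation in terms of consecutive elements is immediate, since $\hA$ is internal, so the successor in $\hA$ of each element exists, provided $\hA$ is unbounded. Unboundedness follows from the finite-gap condition.

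For (3), which is the main obstacle: if $A$ is piecewise syndetic with constant $k$, \emph{transfer} the statement ``$\forall n\,\exists$ an interval $I$ of length $n$ all of whose length-$k$ subintervals meet $A$'' to $n=\nu$ infinite. Every infinite subinterval of that $I$ contains a length-$k$ subinterval, hence meets $\hA$. Conversely, suppose $I$ is infinite and $\hA\cap I$ has only finite gaps. The difficulty is that these gaps need not be uniformly bounded by a standard $k$. I would first consider the internal function $g(\zeta)=$ the maximal gap of $\hA$ inside $[\min I,\zeta]$, which is well defined by transfer of finite maxima. Next comes an overspill argument. The internal set $\{\zeta\in I: g(\zeta)\le \zeta-\min I\ \text{is small}\}$ is awkward, so instead I argue by contradiction. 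If for every $k\in\N$ the sub-intervals of $I$ where all length-$k$ windows meet $\hA$ had only finite length, then the set $\{m\in\hN: \text{some subinterval of }I\text{ of length }m\text{ has a gap}\ge m\}$ would be internal and contain all sufficiently large finite $m$. It would then contain some infinite $m\le|I|$ by overspill, since $\N$ is external. That would yield an infinite gap inside $I$, a contradiction. More concretely, I expect it cleaner to fix $m$ infinite with $m\le|I|$ and choose a standard bound. The maximal gap of $\hA$ in the first $m$ elements of $I$ is finite, say $\le k$, because it is a maximum over a hyperfinite set and the maximum is attained as an actual gap. Hence ``there is an interval of length $m$ whose length-$k$ windows all meet $\hA$'' holds for that fixed finite $k$, with $m$ infinite. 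For every finite $n\le m$, backward \emph{transfer} then gives the standard property for $n$ and $k$, so $A$ is piecewise syndetic. The key point is that a hyperfinite maximum of finite gaps is attained and therefore finite.

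For (4), I would use duality with (3). $A$ is syndetically thick iff $A^c$ is not piecewise syndetic, iff, by (3) and ${}^*(A^c)=\hN\setminus\hA$, every infinite interval $J$ contains an infinite subinterval disjoint from ${}^*(A^c)$, i.e., contained in $\hA$. The negation step needs care: if no $I$ has ${}^*(A^c)\cap I$ with finite gaps, then in particular $J$ itself has an infinite subinterval missing ${}^*(A^c)$.
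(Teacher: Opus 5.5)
Your proposal is correct and follows essentially the same route as the paper. You use transfer of the bounded-quantifier formulations for (1) and (2); your direct argument for (2) is just (1) applied to $A^c$, i.e. the paper's duality. For (4) you use duality with (3), and for the hard direction of (3) you use exactly the paper's device: the longest gap of $\hA$ inside a hyperfinite interval exists by transfer and is an actual gap, hence finite, and backward transfer finishes. The overspill detour and the restriction to the first $m$ elements of $I$ are unnecessary. As in the paper, there is also a harmless off-by-one: a maximal gap $\le k$ guarantees that all windows of length $k+1$, not $k$, meet $\hA$.
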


\begin{proof}
$(1)$. Let $\varphi(n,A,\N)$ be the elementary property
stating that $A$ includes an interval of length $n$:
$$\varphi(n,A,\N):\ ``\exists x\in\N\ (x+1\in A \land\ \ldots\ \land x+n\in A)."$$

We observe that $A$ is thick if and only if 
``$\forall n\in\N\ \varphi(n,A,\N)$" if and only if (by \emph{transfer})
``$\forall \nu\in\hN\ \varphi(n,\hA,\hN)$" if and only if
for every $\nu\in\hN$ there exists an interval $I\subseteq\hA$ of length $\nu$.
Trivially, this latter property implies the existence of an infinite interval $I\subseteq\hA$,
which in turn implies ``$\varphi(n,\hA,\hN)$" for every $n\in\N$, 
and so, by backward \emph{transfer}, ``$\varphi(n,A,\N)$" for every $n\in\N$.

\smallskip
(2). It directly follows from $(1)$ by recalling that $A$ is syndetic if and only
if the complement $A^c$ is not thick, and that ${}^*(A^c)={}^*(\N\setminus A)=
\hN\setminus\hA=(\hA)^c$.

\smallskip
(3). For $n,k\in\N$ consider the elementary property $\psi(n,k,A,\N)$ 
stating that there exists an interval $I\subset\N$ of length $n\ge k$ such that
every subinterval $J\subseteq I$ of length $k$ meets $A$.\footnote
{A possible formalization is the following:
$$\psi(n,k,A,\N):\ ``n\ge k\land 
\exists\,x\in\N\ \left(\bigwedge_{i=0}^{n-k}\,\bigvee_{j=0}^{k-1}\ x+i+j\in A\right)."$$}
If $A$ is piecewise syndetic then there exists $k\in\N$ such that
``$\forall n\in\N$ $\psi(n,k,A,\N)$" and hence, by \emph{transfer},
``$\forall \nu\in\hN\ \psi(\nu,k,\hA,\hN)$".
This implies that there exists an infinite interval $I\subset\hN$ 
(actually, we can take $I$ of any prescribed infinite length)
such that every subinterval $J\subseteq I$ of length $k$ intersects $\hA$. 
Clearly, this latter property in turn implies that $\hA\cap J\ne\emptyset$ 
for every infinite subinterval $J\subseteq I$.

Conversely, suppose that there exists an infinite interval $I\subset\hN$
such that $\hA\cap I$ has only finite gaps.
Consider the elementary property $\vartheta(x,y,k,A)$
stating that $k$ is the size of the longest
gap of $A$ in the interval $[x,y]$, \emph{i.e.},
$k$ is the greatest length of a subinterval $J\subset[x,y]$ such $J\cap A=\emptyset$
(we allow for the empty interval of length $0$).\footnote
{~A possible formalization is the following:
$$\vartheta(x,y,k,A):\ ``\exists z\in\N\ \left(x\le z\le y-k-1\land \bigwedge_{i=0}^{k-1} z+i\notin A\right)\land
\forall w\in\N\ \left(\bigvee_{j=0}^k w+j\in A\right)."$$}
Since nonempty finite subsets of the integers always have a greatest element,
clearly the property ``$\forall x,y\in\N\ (y>x)\to \exists k\in\N_0\ \vartheta(x,y,k,A)$" holds.
Then, by \emph{transfer}, we obtain that 
``$\forall\xi,\eta\in\hN\ (\eta>\xi)\to \exists \nu\in\hN_0\ \vartheta(\xi,\eta,\nu,\hA)$",
and in particular there exists $\nu\in\hN$ which is the greatest length of
a subinterval $J\subseteq I$ disjoint from $\hA$.
Now observe that by the hypothesis, such a $\nu$ is necessarily finite, 
say $\nu=k\in\N$.
Finally, for every $n\in\N$, the infinite interval $I$ is a witness 
of the elementary property $\psi(n,k,\hA,\hN)$ considered above.
Then, by backward \emph{transfer}, we have that for every $n\in\N$
the property $\psi(n,k,A,\N)$ holds, \emph{i.e.},
$A$ is piecewise syndetic.

\smallskip
(4). It directly follows from the nonstandard characterization (3), by recalling
that a set $A\subseteq\N$ is syndetically thick if and only if its complement
$A^c$ is not piecewise syndetic.
\end{proof}

\begin{example}\label{example-digit1}
Let $B\subseteq\N$ be the set of natural numbers 
whose expansion in base $3$ contains at least one digit $1$.
Then $B$ is syndetically thick.

To see this, let $I=[\alpha,\alpha+L)$ be any infinite interval.
Pick an infinite $M$ such that $3^{M+1}\ll L$, \emph{i.e.}, $\frac{L}{3^{M+1}}$ is infinite.\footnote
{For example, if $M$ is the integer part of $\frac{\log_3L}{2}$,
then $3^{M+1}\ll 3^{2M}\le L$.}
Then at least one (in fact, infinitely many) of the consecutive intervals 
$J_\nu:=[\nu\cdot 3^{M+1}, (\nu+1)\cdot3^{M+1})$ is
included in $I$. We observe that every number in the infinite subinterval
$J'_\nu:=[\nu\cdot 3^{M+1}+3^M, \nu\cdot 3^{M+1}+2\cdot 3^M)\subset J_\nu\subset I$ 
has ternary digit $1$ in place $M$
(counting positions from right to left, starting from $0$).
This shows that $\hB\cap I\supseteq J'_\nu$ includes an infinite interval.\footnote
{With a little effort, one could refine the argument, and show that for every infinite interval $I$
of length $L$ there exists an infinite interval $J\subseteq\hB\cap I$ of length $N$
where $\frac{N}{L}\approx \frac{1}{3}$.}
\end{example}

\subsection{Topological characterizations}\label{sec-thicksynd}

For simplicity, in the following we denote $\N_\infty:=\hN\setminus\N$ the
set of infinite hypernatural numbers.

Since the hypernatural numbers are linearly ordered,
there is a natural topology on $\N_\infty$ that one may consider.

\begin{definition}
A set $X\subseteq\N_\infty$ is \emph{$I$-open} if for every
$\xi\in X$ there exists an infinite $\nu\in\hN$ such that
the interval $[\xi-\nu,\xi+\nu]\subseteq X$.
The \emph{interval topology} $\tau_I$ on $\N_\infty$ is the family of  $I$-open sets.\footnote
{The interval topology $\tau_I$ corresponds to the $U$-topology as introduced
by H.J. Keisler and S.C. Leth in \cite[\S 2]{kl},
where the considered cut $U=\N$ is the one given by the finite numbers.}
\end{definition}

Note that the family of $I$-open sets actually is a topology;
indeed, it is easily verified that arbitrary unions and finite intersections of $I$-open sets 
are $I$-open, that the whole set $\N_\infty$ of infinite numbers is $I$-open, and that
(trivially) the empty set is $I$-open.

\begin{remark}
The interval topology $\tau_I$ is not Hausdorff; precisely,
two points $\mu<\nu$ of $\N_\infty$ are topologically indistinguishable, 
\emph{i.e.}, $\mu\in U\Leftrightarrow\nu\in U$ for every $I$-open set $U$,
if and only if they have finite distance $\nu-\mu\in\N$.
So, if $\approx$ is the equivalence relation of having finite distance,
\emph{i.e.}, $\nu\approx\mu\Leftrightarrow |\mu-\nu|\in\N$,
then the topological quotient $\N_\infty/\approx$ is Hausdorff.
We observe that the quotient topology on
$\N_\infty/\approx$ is in fact the order topology determined by
the order on $\N_\infty/\approx$ as induced by the order on $\hN$.
\end{remark}

Note that if $I=[\xi,\eta]\subseteq\N_\infty$ is an interval, then its interior part 
is the set of its points that are at infinite distance from both end-points, \emph{i.e.}, 
$$\text{int}(I)=\{\zeta\in[\xi,\eta]\mid \zeta-\xi,\eta-\zeta\in\N_\infty\}.$$

\begin{proposition}
If a set $U\subseteq\N_\infty$ is $I$-open and nonempty then there
exists an infinite interval $I\subset\N_\infty$ such that $\text{int}(I)\subsetneq U$.
In consequence, $\mathcal{I}:=\{\text{int}(I)\mid I\subset\N_\infty\ \text{interval}\}$ is a base
of the interval topology $\tau_I$.
\end{proposition}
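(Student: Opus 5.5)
Fix a point $\xi\in U$. By the definition of $I$-open set there is an infinite $\nu\in\hN$ with $[\xi-\nu,\xi+\nu]\subseteq U$. Set $I:=[\xi-\nu,\xi+\nu]$. Since $\xi-\nu\ge 1$ is a hypernatural number and $\xi$ is infinite, we should check that $I\subseteq\N_\infty$. This is immediate because $I\subseteq U\subseteq\N_\infty$. The length $|I|=2\nu+1$ is infinite, so $I$ is an infinite interval. We then have $\text{int}(I)\subseteq I\subseteq U$.

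For the strict inclusion, note that the endpoint $\xi-\nu$ lies in $I\subseteq U$ but not in $\text{int}(I)$, because its distance from the left endpoint is $0\in\N$. Hence $\text{int}(I)\subsetneq U$. Also $\xi\in\text{int}(I)$, since $\xi-(\xi-\nu)=\nu$ and $(\xi+\nu)-\xi=\nu$ are both infinite. So every point of $U$ lies in the interior of some interval contained in $U$. This is the stronger fact we need for the base claim.

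For the base statement, two things must be checked. First, each $\text{int}(I)$ is $I$-open. Given $\zeta\in\text{int}(I)$ with $I=[\alpha,\beta]$, let $\mu:=\min\{\zeta-\alpha,\beta-\zeta\}$, which is infinite. The natural candidate radius is $\mu/2$, or more precisely its integer part $\lfloor\mu/2\rfloor$, which exists by \emph{transfer} of integer division. It is still infinite, since if it were finite then $\mu\le 2\lfloor\mu/2\rfloor+1$ would be finite. Every point $\theta\in[\zeta-\lfloor\mu/2\rfloor,\zeta+\lfloor\mu/2\rfloor]$ has $\theta-\alpha\ge\mu-\lfloor\mu/2\rfloor$ and $\beta-\theta\ge\mu-\lfloor\mu/2\rfloor$, and both bounds are infinite, so $\theta\in\text{int}(I)$. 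Second, by the first paragraph every $I$-open $U$ is the union of the sets $\text{int}(I)$ with $I\subseteq U$. Together these show that $\mathcal{I}$ is a base.

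The only mildly delicate step is this halving argument. Shrinking the radius is needed so that nearby points stay at infinite distance from the endpoints. It uses only that $\N$ is closed under doubling, so that half of an infinite number is infinite. Also, the degenerate case $\text{int}(I)=\emptyset$ causes no trouble, since the empty set is open.
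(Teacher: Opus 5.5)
Your proof is correct and follows the paper's argument. You use $I$-openness to place the infinite interval $[\xi-\nu,\xi+\nu]\subseteq U$ around a point $\xi\in U$, and you get strictness from an endpoint that lies in $I$ but not in $\text{int}(I)$. For the base claim, the paper relies on its earlier remark identifying $\text{int}(I)$ with the topological interior and only comments on the empty set; your halving argument (showing each $\text{int}(I)$ is itself $I$-open) and your observation that $\xi\in\text{int}(I)$ make that step explicit, so your version is, if anything, more complete.
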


\begin{proof}
If $U$ is $I$-open and nonempty, pick $\xi\in U$.
Then, by definition of $I$-open set, there exists an infinite $\nu$
such that the infinite interval $I=[\xi-\nu,\xi+\nu]\subseteq U$,
and hence $\text{int}(I)\subsetneq U$, since $\text{int}(I)\subsetneq I$.
For the last statement of the theorem,
one only needs to observe that also the empty open set is obtained,
because  $\text{int}(I)=\emptyset$ for every finite interval $I\subset\N_\infty$.
\end{proof}

\smallskip
Thanks to the interval topology, we can see that the largeness notions 
of thickness and syndeticity are in fact topological notions when seen in the
remote realm of the infinite hypernatural numbers.

\begin{theorem}
For every $A\subseteq\N$ let $A_\infty:=\hA\setminus A=\hA\cap\N_\infty$
be the set of infinite points of $\hA$. The following equivalences hold
for the interval topology $\tau_I$ on $\N_\infty$:\footnote
{Let $Y$ be a subset of a topological space $X$. Recall the following notions:
\begin{itemize}
\item
$Y$ is \emph{dense} if its closure $\overline{Y}=X$.
\item
The \emph{interior part} $\text{int}(Y)$ is the union of all open sets $U\subseteq Y$, or equivalently,
$\text{int}(Y)=(\overline{Y^c})^c$ is the complement of the closure of the complement of $Y$.
\item
$Y$ is \emph{nowhere dense} if for every nonempty open set $U$ there exists 
a nonempty open set $V\subseteq U$ such that $V\cap Y=\emptyset$, or equivalently,
if $\text{int}(\overline{Y})=\emptyset$. 
\item
$Y$ is \emph{somewhere dense} if it is not nowhere dense.
\item
$Y$ is \emph{co-nowhere dense} if the complement $Y^c$ is nowhere dense.
\end{itemize}}
\begin{enumerate}
\item
$A$ is syndetic if and only if $A_\infty$ is dense.
\item
$A$ is thick if and only if the interior part $\text{int}(A_\infty)\ne\emptyset$.
\item
$A$ is piecewise syndetic if and only if $A_\infty$ is somewhere dense.
\item
$A$ is syndetically thick if and only if $A_\infty$ is co-nowhere dense.
\end{enumerate}
\end{theorem}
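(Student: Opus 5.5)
The plan is to reduce each item to Theorem \ref{thickness-syndeticity}, translated through the basis $\mathcal{I}=\{\text{int}(I)\}$ of the interval topology from the preceding proposition. The key observation is that for an interval $I=[\xi,\eta]\subseteq\N_\infty$, the interior $\text{int}(I)$ is nonempty exactly when $I$ is infinite. Moreover, every nonempty basic open set $\text{int}(I)$ contains an infinite interval $J$: take $\zeta=\xi+\lfloor(\eta-\xi)/2\rfloor$ and an infinite $\nu$ with $\nu\le(\eta-\xi)/4$, and set $J=[\zeta-\nu,\zeta+\nu]$. Conversely, every infinite interval $J\subseteq\N_\infty$ has nonempty interior $\text{int}(J)$, which is open. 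So "nonempty open set" and "infinite interval" can be interchanged freely, up to passing to interiors or to sub-intervals. One small point to handle is that intervals of $\hN$ may begin in $\N$. Any infinite interval of $\hN$ contains an infinite subinterval lying inside $\N_\infty$, for instance its upper half, so Theorem \ref{thickness-syndeticity} may be applied to intervals in $\N_\infty$ without loss of generality. Also, $\hA\cap J=A_\infty\cap J$ whenever $J\subseteq\N_\infty$.

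For (1), $A_\infty$ is dense if and only if it meets every nonempty open set. By the observation above, this holds if and only if it meets every infinite interval in $\N_\infty$. Indeed, meeting $\text{int}(J)\subseteq J$ gives the forward direction, and the converse follows because every nonempty open set contains an infinite interval. By Theorem \ref{thickness-syndeticity}(2) (after restricting to sub-intervals in $\N_\infty$), this is exactly syndeticity. For (2), $\text{int}(A_\infty)\neq\emptyset$ if and only if some nonempty open $U\subseteq A_\infty$ exists, if and only if some infinite interval $J\subseteq A_\infty\subseteq\hA$ exists. By Theorem \ref{thickness-syndeticity}(1), this is thickness. For the converse, an infinite interval $I\subseteq\hA$ has an infinite upper half lying in $A_\infty$.

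For (3), I use the footnote's characterization: $A_\infty$ is somewhere dense if and only if there is a nonempty open $U$ such that every nonempty open $V\subseteq U$ meets $A_\infty$. Translating both quantifiers via the observation, this says there is an infinite interval $I\subseteq\N_\infty$ such that every infinite subinterval $J\subseteq I$ meets $\hA$. In one direction, take $I\subseteq U$ and note that $\text{int}(J)\subseteq U$ is open. In the other, take $U=\text{int}(I)$ and note that each open $V\subseteq U$ contains an infinite subinterval of $I$. This is Theorem \ref{thickness-syndeticity}(3).

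Finally, (4) follows from (3) by complementation. The complement of $A_\infty$ in $\N_\infty$ is $(A^c)_\infty$, because ${}^*(A^c)=\hN\setminus\hA$. Hence $A_\infty$ is co-nowhere dense if and only if $(A^c)_\infty$ is not somewhere dense, if and only if $A^c$ is not piecewise syndetic, if and only if $A$ is syndetically thick, by the duality recalled earlier. The main, though mild, obstacle is making the interval/open-set translation in (3) precise with the nested quantifiers. That is why I would first isolate it as a lemma: every nonempty open set contains an infinite interval, and every infinite interval has nonempty open interior.
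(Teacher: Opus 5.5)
Your proposal is correct and follows the paper's route: each item is reduced to Theorem \ref{thickness-syndeticity} by trading nonempty open sets for infinite intervals in $\N_\infty$ and back, using the base of interiors. The paper writes out only item (3), where it shrinks $I$ to $[\xi+\nu,\eta-\nu]$ in the step you handle with a centered subinterval. Your handling of intervals that start in $\N$ via upper halves, and your derivation of (4) from (3) by complementation, are minor tidy-ups of steps the paper leaves implicit.
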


\begin{proof}
All four properties are direct consequences of the nonstandard characterizations 
of Theorem \ref{thickness-syndeticity}, together with the fact that the interior parts 
$\text{int}(I)$ of the intervals $I\subset\N_\infty$ form a basis of $I$-open sets.
As an example, let us look in detail at the proof of piecewise syndeticity.

\smallskip
(3). Suppose first that $A_\infty$ is somewhere dense, and pick a nonempty open set $U$ 
such that $A_\infty\cap V\ne\emptyset$ for every nonempty open $V\subseteq U$.
Pick an infinite interval $I=[\xi,\eta]\subset\N_\infty$ such that $\text{int}(I)\subseteq U$,
pick an infinite $\nu<|I|/3$. Then $I':=[\xi+\nu,\eta-\nu]\subset\N_\infty$ is an infinite 
interval with $I'\subseteq\text{int}(I)\subseteq U$.
We observe that for every infinite sub-interval $J\subseteq I'$
one has $\hA\cap J\ne\emptyset$. Indeed, $\text{int}(J)$ is a nonempty open subset of $U$
and so, by the hypothesis, $A_\infty\cap\text{int}(J)\ne\emptyset$, and hence
also $\hA\cap J\ne\emptyset$. 

Conversely, if $A$ is piecewise syndetic, pick
an infinite interval $I$ such that for every infinite sub-interval $J\subseteq I$ 
one has $\hA\cap J\ne\emptyset$, and let $U:=\text{int}(I)$.
Then $U$ is open and nonempty; besides, $A_\infty\cap V\ne\emptyset$ for every
nonempty open $V\subseteq U$. To see this, pick an infinite interval $J=[\xi,\eta]\subset\N_\infty$
such that $\text{int}(J)\subseteq V$. Similarly as above, pick an infinite $\nu<|J|/3$;
then $J':=[\xi+\nu,\eta-\nu]\subset\N_\infty$ is an infinite interval with $J'\subseteq\text{int}(J)\subseteq V$.
By the hypothesis, the intersection $\hA\cap J'\ne\emptyset$.
Finally, note that, since $J'\subset\N_\infty$, one has
$\hA\cap J'=A_\infty\cap J'\subseteq A_\infty\cap V$.
\end{proof}

\begin{example}
Let $C\subseteq\N$ be the Cantor ``ternary set" of natural numbers 
whose expansion in base $3$ does not contains any digit $1$.
Then $C_\infty$ is nowhere dense.
Indeed, we have seen in Example \ref{example-digit1} that the 
complement $B=C^c$ is syndetically thick,
and hence $B_\infty=(C_\infty)^c$ is co-nowhere dense.
\end{example}

We note that, as a consequence of the previous theorem, 
the families of syndetic and of piecewise syndetic sets
satisfy all the general properties that are satisfied by
the families of dense and of somewhere dense sets, respectively.\footnote
{~In fact, the embedding $A\mapsto A_\infty$ respect the set theoretic
operations of arbitrary unions and intersections, and of complements, \emph{i.e.},
$(\bigcup_{i\in I}A_i)_\infty=\bigcup_{i\in I}(A_i)_\infty$, 
$(\bigcap_{i\in I}A_i)_\infty=\bigcap_{i\in I}(A_i)_\infty$, and 
$(A\setminus B)_\infty=A_\infty\setminus B_\infty$.}
As a particular case, we obtain an alternative topological proof of the strong partition
regularity of piecewise syndetic sets.

\begin{corollary}
If the set $A\subseteq\N$ is piecewise syndetic and $A=A_1\cup\ldots\cup A_n$ then
one of the pieces $A_i$ is piecewise syndetic.
\end{corollary}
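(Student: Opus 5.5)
The plan is to derive this from the topological characterization just proved, namely item (3) of the previous theorem: $A$ is piecewise syndetic if and only if $A_\infty$ is somewhere dense in $(\N_\infty,\tau_I)$. The argument reduces to the general topological fact that a finite union of nowhere dense sets is nowhere dense.

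First, I will note that $(A_1\cup\ldots\cup A_n)_\infty=(A_1)_\infty\cup\ldots\cup(A_n)_\infty$. This follows from ${}^*(A\cup B)=\hA\cup\hB$, which was listed among the consequences of \emph{transfer}, after intersecting both sides with $\N_\infty$. So $A_\infty=\bigcup_{i=1}^n(A_i)_\infty$, and by the hypothesis and item (3) this union is somewhere dense.

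Next, I argue by contradiction. Suppose every $(A_i)_\infty$ is nowhere dense. Start from a nonempty $I$-open set $U_0$. Using the footnote's definition of nowhere dense, choose a nonempty open $U_1\subseteq U_0$ with $U_1\cap(A_1)_\infty=\emptyset$. Then choose a nonempty open $U_2\subseteq U_1$ disjoint from $(A_2)_\infty$, and continue in this way. After $n$ steps we obtain a nonempty open $U_n\subseteq U_0$ disjoint from $A_\infty$. Since $U_0$ was arbitrary, $A_\infty$ is nowhere dense, which is a contradiction. Hence some $(A_i)_\infty$ is somewhere dense, and by item (3) again that $A_i$ is piecewise syndetic.

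There is no real obstacle here. The only point needing care is confirming that the finite iteration is legitimate: each $U_k$ is a genuinely nonempty open set, so the definition of nowhere dense can be applied again at the next step. Alternatively, I could work with the base $\mathcal{I}$ of interiors of infinite intervals and shrink intervals step by step. That amounts to the same argument.
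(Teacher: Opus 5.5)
Your proposal is correct and follows the paper's own proof: both go through item (3) of the topological characterization, use $(A_1\cup\ldots\cup A_n)_\infty=\bigcup_i (A_i)_\infty$, and derive a contradiction from the fact that a finite union of nowhere dense sets is nowhere dense. The only difference is that you write out the iterated-shrinking proof of that fact, which the paper simply cites.
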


\begin{proof}
If every set $A_i$ were not piecewise syndetic then we would have a union
$A_\infty=(A_1)_\infty\cup\ldots\cup(A_n)_\infty$ where $A_\infty$ is somewhere dense
and every $(A_i)_\infty$ is nowhere dense, contradicting the topological fact that
a finite union of nowhere dense sets is nowhere dense.
\end{proof}

\section{Finite embeddability}

\begin{definition}
Let $A,B\subseteq\N$. 
We say that $B$ is \emph{finitely embeddable} in $A$,
and write $B\fe A$, if every finite subset $F\subseteq B$ has a rightward shift
$x+F\subseteq A$, \emph{i.e.} if for all $b_1,\ldots,b_k\in B$
there exists $x\in\N$ such that $x+b_i\in A$ for every $i=1,\ldots,k$.\footnote
{This notion of embeddability was isolated and studied in \cite{dn};
however, it was already implicitly used in other previous papers in additive number theory.}
\end{definition}

Examples showing that finite embeddability is not reflexive are easy to find
(note that only nontrivial shifts by $x\ne 0$ are allowed).
However, it is readily seen that 
$\fe$ is a transitive relation: ``If $C\fe B$ and $B\fe A$ then $C\fe A$."

It is a simple exercise to verify that finite embeddability is preserved
by taking ``Delta-sets" $\Delta(A):=\{a'-a\mid a<a'\ \text{in}\ A\}$, and also
by taking intersections of shifts.
\begin{itemize}
\item
\emph{If $B\fe A$ then $\Delta(B)\fe\Delta(A)$.}
\item
\emph{If $B\fe A$ then 
$\bigcap_{x\in F}(B-x)\fe\bigcap_{x\in F}(A-x)$ for every nonempty finite $F\subset\N_0$.}
\end{itemize}

\begin{definition}\label{def-remoterealm}
For $A\subseteq\N$ and $\nu\in\hN$, the \emph{remote realm} 
of $A$ at $\nu$ is following set of natural numbers:
$$A_\nu:=\{n\in\N\mid \nu+n\in\hA\}=(\hA-\nu)\cap\N.$$
\end{definition}

Informally, we can say that $A_\nu$ is the set of natural numbers 
that lie in $\hA$ when seen starting from the ``remote" viewpoint $\nu$.

\begin{theorem}\label{finitelyembedded}
Let $A,B\subseteq\N$. The following are equivalent:
\begin{enumerate}
\item
$B\fe A$.
\item
There exists $\nu\in\N$ such that $B\subseteq A_\nu$.
\end{enumerate}
\end{theorem}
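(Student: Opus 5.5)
The plan is to read the condition in (2) as ``there exists $\nu\in\hN$'' rather than ``$\nu\in\N$'', since the remote realm $A_\nu$ of Definition \ref{def-remoterealm} is defined for $\nu\in\hN$. For a finite $\nu$ one has $A_\nu=A-\nu$, and with that reading (1) would not imply (2) in general. I would prove the two implications separately: backward \emph{transfer} for $(2)\Rightarrow(1)$, and the $\mathfrak{c}$-\emph{enlarging property} for $(1)\Rightarrow(2)$.

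\smallskip
$(2)\Rightarrow(1)$. Fix $\nu\in\hN$ with $B\subseteq A_\nu$, and a finite set $F=\{b_1,\ldots,b_k\}\subseteq B$. By the definition of $A_\nu$, we have $\nu+b_i\in\hA$ for every $i$. So the elementary property
$$\exists x\in\hN\ (x+b_1\in\hA\land\ldots\land x+b_k\in\hA)$$
holds, witnessed by $x=\nu$. Recall that ${}^*b_i=b_i$. Backward \emph{transfer} then gives some $x\in\N$ with $x+F\subseteq A$. Hence $B\fe A$.

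\smallskip
$(1)\Rightarrow(2)$. For each finite $F\subseteq B$, let
$$G_F:=\{x\in\N\mid x+b\in A\ \text{for all } b\in F\}.$$
The hypothesis $B\fe A$ says exactly that every $G_F$ is nonempty. The family $\G=\{G_F\mid F\subseteq B\ \text{finite}\}$ is countable. It has the finite intersection property, because
$$G_{F_1}\cap\ldots\cap G_{F_n}\supseteq G_{F_1\cup\ldots\cup F_n}\ne\emptyset.$$
By \emph{enlarging}, there is some $\nu\in\bigcap_F{}^*G_F$. For each $b\in B$, the coherence property of hyper-extensions (the proposition on ${}^*\{x\in A\mid\varphi\}$) gives
$${}^*G_{\{b\}}=\{x\in\hN\mid x+b\in\hA\}.$$
Since $\nu\in{}^*G_{\{b\}}$, we get $\nu+b\in\hA$, that is, $b\in A_\nu$. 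Therefore $B\subseteq A_\nu$.

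\smallskip
I expect no serious obstacle. The only point needing care is the bookkeeping in the last step: applying the coherence proposition correctly to the one-point sets $\{b\}$, and relying on ${}^*b=b$ for $b\in\N$. Only the singletons are actually needed at the end. The larger sets $G_F$ are used solely to establish the finite intersection property, which is precisely where the hypothesis $B\fe A$ enters.
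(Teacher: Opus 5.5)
Your proof is correct and follows the paper's argument. The paper uses enlargement for $(1)\Rightarrow(2)$ and backward transfer with $\nu$ as the witness for $(2)\Rightarrow(1)$; it works directly with the singleton sets $\Lambda_b=G_{\{b\}}$, whose finite intersection property is exactly $B\fe A$, so your $G_F$ are just their finite intersections. You are also right that the statement should read $\nu\in\hN$, which is what the paper's proof actually uses.
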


\begin{proof}
Suppose first that $B\fe A$, and for every $b\in B$,
let $\Lambda_b:=\{x\in\N_0\mid x+b\in A\}$. By the hypothesis we know that
the family $\{\Lambda_b\mid b\in B\}$ has the finite intersection property
and so, by \emph{enlargement}, we can pick an element $\nu\in\bigcap_{b\in B}{}^*\Lambda_b$.
Since ${}^*\Lambda_b=\{x\in\hN_0\mid x+b\in\hA\}$, this means that
$\nu+B\subseteq\hA$, \emph{i.e.}, $B\subseteq A_\nu$.

Conversely, for all $b_1,\ldots,b_k\in B$, the number $\nu\in\hN$ is a witness
of the following elementary property: ``$\exists x\in\hN\ (x+b_1\in\hA\land\ldots\land x+b_k\in\hA)$."
By backward \emph{transfer} we obtain that ``$\exists x\in\N\ (x+b_1\in A\land\ldots\land x+b_k\in A)$",
as desired.
\end{proof}

The following properties are directly proved from the definitions
and the equivalences of Theorem \ref{thickness-syndeticity}.\footnote
{See \cite[\S 4]{dn}.}

\begin{itemize}
\item
\emph{$A$ is thick if and only if there exists $\nu\in\hN$ such that $A_\nu=\N$
if and only if $\N\fe A$.}
\item
\emph{$A$ is syndetic if and only if $A_\nu\ne\emptyset$ for every $\nu\in\hN$
if and only if $A_\nu$ is syndetic for every $\nu\in\N$.}
\item
\emph{$A$ is piecewise syndetic if and only if there exists $\nu\in\hN$ such that $A_\nu$ is syndetic
if and only if there exists a syndetic set $B$ such that $B\fe A$.}
\item
\emph{$A$ is ``AP-rich", \emph{i.e.}, it includes arbitrarily long arithmetic progressions,
if and only if there exists $\nu\in\hN$ such that $A_\nu$ is AP-rich.}
\end{itemize}
 
Recall the \emph{upper} and \emph{lower asympotic densities} of sets $A\subseteq\N$:
$$\underline{d}(A)=\liminf_{n\to\infty}\frac{|A\cap\{1,\ldots,n\}|}{n};\quad
\overline{d}(A)=\limsup_{n\to\infty}\frac{|A\cap\{1,\ldots,n\}|}{n}$$

Banach density ``refines" the usual asymptotic density
by considering arbitrary intervals $\{x+1,\ldots,x+n\}$ instead of just initial intervals $\{1,\ldots,n\}$.
Precisely, for $A\subseteq\N$, the \emph{lower Banach density} $\underline{\text{BD}}(A)$
and the \emph{upper Banach density} $\overline{\text{BD}}(A)$ are defined as follows:
\begin{multline*}
\underline{\text{BD}}(A):=
\lim_{n\to\infty}\left(\min_{x\in\N}\frac{|A\cap\{x+1,\ldots,x+n\}|}{n}\right)
\\
\overline{\text{BD}}(A):=
\lim_{n\to\infty}\left(\min_{x\in\N}\frac{|A\cap\{x+1,\ldots,x+n\}|}{n}\right).
\end{multline*}

It is easily verified that:
$$\underline{\text{BD}}(A)\le\underline{d}(A)\le\overline{d}(A)\le\overline{\text{BD}}(A).$$

Another notion of density that is widely used in number theory 
is \emph{Schnirelmann density}:
$$\sigma(A):=\inf_{n\in\N}\frac{|A\cap\{1,\ldots,n\}|}{n}.$$

We remark that this density is much stricter than Banach density, and
indeed one has:
\begin{itemize}
\item
$\sigma(A)\le\underline{\text{BD}}(A)$.
\item
\emph{There exist sets $A$ with $\underline{\text{BD}}(A)=1$ and $\sigma(A)=0$.}
\end{itemize}

However, R. Jin \cite{ji} proved that one of the ``remote realms" of a given set 
has a Schnirelmann density equal to the upper Banach density
(see also \cite[Theorem II]{dn} for a self-contained proof).
In standard terms, we have the following property.

\begin{theorem}[Jin]
Let $A\subseteq\N$ have positive Banach density.
Then there exists a set $B\fe A$ such that the Schnirelmann
density $\sigma(B)=\overline{\text{BD}}(A)$.
\end{theorem}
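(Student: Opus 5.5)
By Theorem \ref{finitelyembedded}, it is enough to find $\nu\in\hN$ such that the remote realm $B:=A_\nu$ has Schnirelmann density $\sigma(A_\nu)=\beta:=\overline{\text{BD}}(A)$. Indeed, $A_\nu\fe A$ by that theorem. Moreover $\sigma(A_\nu)\le\beta$ holds automatically. A set $B\fe A$ satisfies $\overline{\text{BD}}(B)\le\overline{\text{BD}}(A)$, because every interval count in $B$ is reproduced, after a shift, by an interval count in $A$. On the other hand $\sigma(B)\le\underline{\text{BD}}(B)\le\overline{\text{BD}}(B)$. So the whole task is to find $\nu$ with
$$|\hA\cap[\nu+1,\nu+n]|\ge\beta n\quad\text{for every } n\in\N.$$

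\textbf{Step 1 (a long interval of density close to $\beta$).} By definition of upper Banach density, for each $n$ there is an interval of length $n$ where $A$ has relative density at least $\beta-1/n$. Transfer this to an infinite $N\in\hN$, with the corresponding hyperreal statement about hyperfinite counts. This gives an interval $[\alpha+1,\alpha+N]$ on which $|\hA\cap[\alpha+1,\alpha+N]|/N\ge\beta-1/N$, so the relative density is infinitely close to or above $\beta$. Transferring the definition of $\beta$ also gives an upper bound: for all sufficiently large lengths $m$, every interval of length $m$ has density at most $\beta+\varepsilon$.

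\textbf{Step 2 (choosing the starting point: a ``rising sun'' argument).} The set of positions inside $[\alpha,\alpha+N]$ is hyperfinite, so I can use internal extremal choices. For $x\in[\alpha,\alpha+N]$ define the internal function
$$g(x):=|\hA\cap[\alpha+1,x]|-\beta'(x-\alpha),$$
where $\beta'$ is a hyperrational with $\beta'<\beta$ and $\beta-\beta'$ infinitesimal but suitably larger than $1/N$. Then $g(\alpha)=0$ and $g(\alpha+N)>0$, so $g$ is not identically bounded by its start. Let $\nu\in[\alpha,\alpha+N]$ be the point at which $g$ attains its minimum; it exists because this is a hyperfinite set.

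With this choice, $g(\nu+n)\ge g(\nu)$ whenever $\nu+n\le\alpha+N$, which means $|\hA\cap[\nu+1,\nu+n]|\ge\beta' n$. I need $\nu+n\le\alpha+N$ for every finite $n$, that is, $\nu$ must be at infinite distance from the right endpoint. This is the main obstacle. Near the right end the density would have to be close to $\beta$ while $g$ is still at its minimum, and I would try to rule this out by choosing $\beta'$ slightly smaller and using the upper bound on densities of long intervals. If that fails, the fallback is to take the last minimum point of $g$ over the initial portion $[\alpha,\alpha+N-M]$ for some infinite $M\ll N$, and check that the total gain is positive there.

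\textbf{Step 3 (removing $\beta'$).} For each finite $n$ we now have $|\hA\cap[\nu+1,\nu+n]|\ge\beta' n$, and the left-hand side is a standard integer. Since $\beta-\beta'$ is infinitesimal, $\beta' n>\beta n-1$, and an integer at least $\beta' n$ must be at least $\lceil\beta n-\text{infinitesimal}\rceil$, hence at least $\beta n$ when $\beta n$ is not an integer. When $\beta n$ is an integer this rounding does not work, and a cleaner route is to handle it directly. Apply the previous steps to get, for each standard $k$, a point $\nu_k$ that works with $\beta-1/k$ for the first $k$ values of $n$. Then take a countable intersection by enlargement, or saturation, which yields a single $\nu$ with counts at least $\beta n$ exactly. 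Combined with the upper bound from the opening paragraph, this gives $\sigma(A_\nu)=\beta$.
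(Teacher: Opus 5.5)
Your reduction is correct: it suffices to find $\nu$ with $|\hA\cap[\nu+1,\nu+n]|\ge\beta n$ for every $n\in\N$. Taking $\nu$ to be a minimizer of the discrepancy function $g$ is also the right idea. However, the step that carries the whole argument is left unproved. Minimality only gives $g(\nu+n)\ge g(\nu)$ for $\nu+n\le\alpha+N$. If $\alpha+N-\nu$ were finite, you would get the inequality for only finitely many $n$, and nothing about $\sigma(A_\nu)$ would follow. You flag this as ``the main obstacle'' but do not resolve it, and your remedies do not resolve it either. Making $\beta'$ smaller is the right instinct, but the upper bound on densities of long intervals plays no role in why it works. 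The fallback just relocates the problem: a minimizer over $[\alpha,\alpha+N-M]$ may lie at finite distance from $\alpha+N-M$, and beyond that point minimality says nothing. Likewise, $g(\alpha+N)>0$, which is all you record, is too weak. The missing idea is a one-line estimate. Choose $\beta'$ so that $(\beta-\beta')N$ is \emph{infinite}, e.g.\ $\beta-\beta'=1/\sqrt{N}$. (A standard $0<\varepsilon<\beta$ also works if you then use your enlargement step over $\varepsilon=1/k$.) Then $g(\alpha+N)\ge(\beta-\beta')N-1$ is infinite. Moreover $g(\nu)\le g(\alpha)=0$ and $g(x+1)-g(x)\le 1-\beta'\le 1$ for all $x$. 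Hence $\alpha+N-\nu\ge g(\alpha+N)-g(\nu)$ is infinite, so $\nu+n<\alpha+N$ for every $n\in\N$, and the argument closes.

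There are also some smaller points. First, your worry in Step 3 about $\beta n\in\N$ is unfounded. For $n\in\N$, both the count $c=|\hA\cap[\nu+1,\nu+n]|\in\{0,\ldots,n\}$ and the number $\beta n$ are standard. So $c\ge\beta' n$ with $\beta n-\beta' n$ infinitesimal forces $c\ge\beta n$, whether or not $\beta n$ is an integer. With infinitesimal $\beta-\beta'$, the detour through points $\nu_k$ and enlargement is valid but unnecessary. Second, in your opening paragraph, $\sigma(B)\le\underline{\text{BD}}(B)$ is false in general: $B=\bigcup_{k\ge 0}[4^k,2\cdot 4^k)$ has $\sigma(B)=1/3$ but $\underline{\text{BD}}(B)=0$. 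The bound you actually need, $\sigma(B)\le\overline{\text{BD}}(B)$, still holds because $\sigma(B)\le\underline{d}(B)\le\overline{\text{BD}}(B)$. Third, in Step 1, having for each $n$ an interval of length $n$ with density at least $\beta-1/n$ is not literally the definition of $\overline{\text{BD}}$. It follows from subadditivity of $n\mapsto\max_x|A\cap[x+1,x+n]|$, which even gives density at least $\beta$. In any case, an infinitesimal error at the infinite length $N$ suffices once $\beta'$ is adjusted accordingly.
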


As consequences of this theorem one can obtain Banach density versions
of several results about Schnirelmann density.
For example, the following version of Mann's theorem holds 
(see \cite[Theorem 2]{ji}):\footnote
{Recall Mann's Theorem: \emph{Let $A,B\subseteq\N\cup\{0\}$ be such that $0\in A\cap B$. Then
$\sigma(A+B)\ge\min\{\sigma(A)+\sigma(B),1\}$}.}
\begin{itemize}
\item
\emph{Let $A,B\subseteq\N$. Then
$\overline{\text{BD}}((A+B)\cup(A+B+1))\ge\min\{\overline{\text{BD}}(A)+\overline{\text{BD}}(B),1\}$.}
\end{itemize}

Another example is the version of Plunnecke's inequality
for the Banach density (see \cite{ji2}), parallel to the original one
that was formulated for the Schnirelmann density.

Jin's Theorem also has consequences about the existence of finite
configurations in sets of positive density.

\begin{proposition}
Let $\emptyset\ne\mathfrak{P}\subseteq\PP(\N)$ be a translation invariant family of finite patterns.
If every set of positive Schnirelmann density includes a pattern $P\in\mathfrak{P}$, then
also every set of positive Banach density includes a pattern $P\in\mathfrak{P}$.
\end{proposition}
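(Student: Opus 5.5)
Let $A\subseteq\N$ have positive (upper) Banach density. The plan is to apply Jin's Theorem to pass to a set of positive Schnirelmann density, find the pattern there, and transport it back into $A$ through finite embeddability.

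By Jin's Theorem there is a set $B\fe A$ with $\sigma(B)=\overline{\text{BD}}(A)>0$. The hypothesis on $\mathfrak{P}$ then gives a pattern $P\in\mathfrak{P}$ with $P\subseteq B$.

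Since patterns in $\mathfrak{P}$ are finite, $P$ is a finite subset of $B$. The definition of $B\fe A$ therefore provides $x\in\N$ with $x+P\subseteq A$. Equivalently, by Theorem \ref{finitelyembedded}, $B\subseteq A_\nu$ for some $\nu\in\hN$. Then $\nu+P\subseteq\hA$, and backward \emph{transfer} of the elementary statement ``$\exists x\ (x+p_1\in\hA\land\ldots\land x+p_k\in\hA)$'' yields such an $x\in\N$.

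Translation invariance of $\mathfrak{P}$ means $x+P\in\mathfrak{P}$. So $A$ contains the pattern $x+P\in\mathfrak{P}$, which proves the proposition.

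There is no real obstacle; the only points to check are the following.

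\begin{itemize}
\item Finiteness of the patterns is exactly what allows finite embeddability to move $P$ into $A$.
\item Translation invariance should be read as invariance under positive shifts $x\in\N$, which is what the definition of $\fe$ provides.
\item ``Positive Banach density'' should be read as $\overline{\text{BD}}(A)>0$, since that is the hypothesis of Jin's Theorem. If the lower density were meant instead, the argument still applies, because $\overline{\text{BD}}(A)\ge\underline{\text{BD}}(A)$.
\end{itemize}
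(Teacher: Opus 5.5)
Your proof is correct and matches the paper's: the paper takes $B$ to be a remote realm $A_\nu$ with $\sigma(A_\nu)=\overline{\mathrm{BD}}(A)$, which is just the nonstandard form of Jin's Theorem. It then finds $Q\in\mathfrak{P}$ with $Q\subseteq A_\nu$, uses $A_\nu\fe A$ to get a shift $x+Q\subseteq A$, and concludes by translation invariance, exactly as you do.
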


\begin{proof}
Given $A\subseteq\N$ of positive upper Banach density, pick $\nu$ such that 
$\sigma(A_\nu)=\overline{\text{BD}}(A)$.
By the hypothesis, there exists $Q\in\mathfrak{P}$ such that
$Q\subseteq A_\nu$. Now recall that $A_\nu\fe A$; so, there exists
a shift $x+Q\subseteq A$. Since $\mathfrak{P}$ is shift invariant, 
we have that $P:=x+Q\in\mathfrak{P}$ is the desired pattern included in $A$.
\end{proof}

For example, a proof that every set of positive Schnirelmann densitiy
includes arbitrarily long arithmetic progressions would already be a proof 
of Szemer\'edi's theorem.
Although this observation could potentially be useful, I know of no explicit example 
of this kind where the stronger assumption of a positive Schnirelmann density has been used.

\section{The $u$-equivalence relation on $\hN$}

In this section we introduce the $u$-equivalence relation on hypernatural numbers, 
which is of central importance in partition regularity problems.

\begin{definition}
Two elements $\nu,\mu\in\hN$ are \emph{$u$-equivalent},
and we write $\nu\ueq\mu$, when $\nu\in\hA$ if and only if $\mu\in\hA$ for all $A\subseteq\N$.
\end{definition}

\begin{remark}\label{hNtopologicalspace}
In Section \ref{sec-thicksynd} we have seen the \emph{interval topology}.
However, the topology that is most commonly considered on the hypernatural 
numbers $\hN$ is the \emph{standard topology} $\tau_s$, that has the 
family $\mathcal{S}$ of hyper-extensions as a base of (cl)open sets:\footnote
{The name ``standard topology" originates from the fact that in the 
early years of nonstandard analysis, hyper-extensions $\hA$ were called 
``standard sets" of the nonstandard universe.}
$$\mathcal{S}=\{\hA\mid A\subseteq\N\}.$$ 
The topological space $(\hN,\tau_S)$ is a completely regular zero-dimensional 
compact space where $\N$ is a discrete dense subspace. 
By \emph{enlargement}, it is shown that $(\hN,\tau_S)$ is not Hausdorff, 
in fact not even $T_0$. We observe that the $u$-equivalence on $\hN$
coincide with the Kolmogorov equivalence $\equiv_K$
that holds between points that are topologically indistinguishable.\footnote
{Two points $x,y$ are topologically indistinguishable when
they have the same set of neighborhoods.}
We remark that $\hN$ can be seen
as an ``expansion" of the Stone-\v{Cech} compactification $\beta\N$;
indeed, it is not difficult to show that $\beta\N$ is homeomorphic to the Kolmogorov
quotient space $\hN/\equiv_K$.\footnote
{The interested reader can find further information about the topological space $(\hN,\tau_S)$ 
in \cite{dp}, where dynamics with the use of hypernatural numbers is studied.}
\end{remark}

The following fact establishes a
close connection between $\ueq$-equivalence
and partition regularity problems. 

\smallskip
\begin{theorem}\label{PRcharacterizations}
Let $\mathfrak{P}\subseteq\mathcal{P}(\N)$ be a family of ``patterns" or ``configurations".
Then the following are equivalent:
\begin{enumerate}
\item
$\mathfrak{P}$ is partition regular, \emph{i.e.},
in every finite coloring 
$\mathbb{N}=C_1\cup\ldots\cup C_r$ there exists a 
pattern $P\in\mathfrak{P}$ which is monochromatic, \emph{i.e.},
$P\subseteq C_i$ for a color $C_i$.
\item
There exists $Q\in{}^*\mathfrak{P}$ whose elements are $u$-equivalent to each other.
\item
There exists $\nu\in\hN$ that satisfies the property:
If $\nu\in\hA$ then $P\subseteq A$ for some $P\in\mathfrak{P}$.
\end{enumerate}
\end{theorem}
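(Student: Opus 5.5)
We will prove the cycle $(1)\Rightarrow(3)\Rightarrow(2)\Rightarrow(1)$; presumably the patterns are finite, or at least we will only use finite patterns. The only genuinely nonstandard input is the \emph{$\mathfrak{c}$-enlarging property}, applied twice.

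$(1)\Rightarrow(3)$. Let $\G:=\{A\subseteq\N\mid A\ \text{includes no}\ P\in\mathfrak{P}\}$, and consider the family $\{A^c\mid A\in\G\}$, which has at most $\mathfrak{c}$ members. This family has the finite intersection property. Indeed, if $A_1^c\cap\ldots\cap A_n^c=\emptyset$, then $\N=A_1\cup\ldots\cup A_n$ is a finite coloring in which no color contains a pattern, against (1). (If $\G=\emptyset$ the claim is trivial.) By \emph{enlarging}, pick $\nu\in\bigcap_{A\in\G}{}^*(A^c)=\bigcap_{A\in\G}(\hN\setminus\hA)$. Now if $\nu\in\hA$ then $A\notin\G$, i.e., $A$ contains some $P\in\mathfrak{P}$, which is (3).

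$(3)\Rightarrow(2)$. This is the main step. Let $\U:=\{A\subseteq\N\mid\nu\in\hA\}$. For each $A\in\U$, let $\Phi_A:=\{P\in\mathfrak{P}\mid P\subseteq A\}$, which is nonempty by (3). The family $\{\Phi_A\mid A\in\U\}$ has size at most $\mathfrak{c}$ and has the finite intersection property. To see this, note that $A_1,\ldots,A_n\in\U$ gives $\nu\in{}^*(A_1\cap\ldots\cap A_n)$ by the property ${}^*(A\cap B)=\hA\cap\hB$, so $A_1\cap\ldots\cap A_n\in\U$, and $\Phi_{A_1\cap\cdots\cap A_n}\subseteq\bigcap_i\Phi_{A_i}$ is nonempty. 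By \emph{enlarging}, there is $Q\in\bigcap_{A\in\U}{}^*\Phi_A$.

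By transfer of ``$\forall P\in\Phi_A\ P\subseteq A$'', we get $Q\in{}^*\mathfrak{P}$ and $Q\subseteq\hA$ for every $A\in\U$. We claim every $\xi\in Q$ is $u$-equivalent to $\nu$. Suppose $\nu\in\hB$; then $B\in\U$, hence $\xi\in\hB$. Suppose instead $\nu\notin\hB$; then $\nu\in{}^*(B^c)$, so $B^c\in\U$ and $\xi\in{}^*(B^c)=\hN\setminus\hB$. Thus all elements of $Q$ are $u$-equivalent to $\nu$, hence to each other.

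$(2)\Rightarrow(1)$. Given a coloring $\N=C_1\cup\ldots\cup C_r$, transfer yields $\hN=\hat C_1\cup\ldots\cup{}^*C_r$, with the notation ${}^*C_i$. Pick any $\xi\in Q$; then $\xi\in{}^*C_i$ for some $i$. By $u$-equivalence, $Q\subseteq{}^*C_i$. Hence the elementary statement ``$\exists P\in{}^*\mathfrak{P}\ P\subseteq{}^*C_i$'' holds, and by backward \emph{transfer} some $P\in\mathfrak{P}$ satisfies $P\subseteq C_i$.

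We expect $(3)\Rightarrow(2)$ to be the main obstacle. The delicate point there is building one internal pattern that lies simultaneously in all hyper-extensions containing $\nu$, and this is exactly where enlarging over a family of size $\mathfrak{c}$ is needed.
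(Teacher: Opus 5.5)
Your proof is correct, but it is organized differently from the paper's.

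\textbf{The paper's route.} The paper proves $(1)\Rightarrow(2)\Rightarrow(3)\Rightarrow(1)$ and uses \emph{enlarging} only once, in $(1)\Rightarrow(2)$. For every $A\subseteq\N$ it takes the family $\Lambda(A)$ of patterns that are either included in $A$ or disjoint from $A$. It checks the finite intersection property by coloring $\N$ with the atoms of the Boolean algebra generated by $A_1,\ldots,A_n$, and then picks $Q\in\bigcap_{A}{}^*\Lambda(A)$. Since $Q\subseteq\hA$ or $Q\subseteq{}^*(A^c)$ for every $A$, the $u$-equivalence of its elements is immediate, and $(2)\Rightarrow(3)$ is then trivial.

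\textbf{Your route.} You pass through (3) first. Your $(1)\Rightarrow(3)$ is the classical argument that produces a PR-witness point $\nu$, i.e.\ an ultrafilter $\UU_\nu$ contained in the family of sets that include a pattern. Your $(3)\Rightarrow(2)$ then needs a second enlargement, over the families $\Phi_A$ for $A\in\UU_\nu$.

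\textbf{Comparison.} The paper's route is shorter, because the single family $\Lambda(A)$ both produces the pattern and forces it to be monochromatic. Yours costs an extra enlargement but proves slightly more: for \emph{every} $\nu$ as in (3), there is $Q\in{}^*\mathfrak{P}$ all of whose elements are $u$-equivalent to that very $\nu$. Combined with the paper's $(2)\Rightarrow(3)$, this shows that the PR-witness ultrafilters are exactly the ultrafilters generated by elements of such internal patterns $Q$.

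\textbf{Finiteness.} Your hedge about finite patterns is unnecessary. Nothing in your argument, or in the paper's, uses finiteness of the $P\in\mathfrak{P}$. This matters, because the paper later applies the theorem to families of infinite patterns, such as those arising from increasing sequences in the Ramsey-property results.
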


\begin{proof}
$(1)\Rightarrow(2)$. 
For every $A\subseteq\N$, let 
$\Lambda(A):=\{P\in\mathfrak{P}\mid P\subseteq A\ \text{or}\ P\cap A=\emptyset\}$.
We observe that the family 
$\mathcal{G}:=\{\Lambda(A)\mid A\subseteq\N\}$ has the finite intersection property.
Indeed, given $A_1,\ldots,A_n\subseteq\N$, pick a finite coloring
$\N=C_1\cup\ldots\cup C_r$ such that for every $i=1,\ldots,n$ and for
every $j=1,\ldots,r$ one has that either $C_j\subseteq A_i$ or $C_j\cap A_i=\emptyset$.
By the hypothesis, there exists $P\in\mathfrak{P}$ and a color $C_j$ such that $P\subseteq C_j$;
then clearly $P\in\bigcap_{i=1}^n\Lambda(A_i)$.
By \emph{enlargement} there exists $Q\in\bigcap_{A\in\U}{}^*\Lambda(A)$.
Since $Q\subseteq\hA$ or $Q\subseteq \hA^c$ for every $A\subseteq\N$,
the elements of $Q\in{}^*\mathfrak{P}$ are $u$-equivalent to each other.

$(2)\Rightarrow(3)$.
We show that any $\nu\in Q$ satisfies the desired property.
Let $A\subseteq\N$ be such that $\nu\in\hA$.
Since the elements of $Q$ are $u$-equivalent to each other, then it is $Q\subseteq\hA$.
Finally, by applying \emph{backward transfer} to the following elementary property:
``$(\exists Q\in{}^*\mathfrak{P}) (Q\subseteq\hA)$",
one obtains the existence of an element $P\in\mathfrak{P}$ such that $P\subseteq A$.

$(3)\Rightarrow(1)$.
Let $\N=C_1\cup\ldots\cup C_r$ be a finite coloring.
Pick the color $C_i$ such that $\nu\in{}^*C_i$;
then there exists $P\in\mathfrak{P}$ with $P\subseteq C_i$.
\end{proof}

\begin{remark}
For simplicity, we proved the above characterizations for families of patterns in $\N$, 
but the same result also holds for families $\mathfrak{P}\subseteq\mathcal{P}(X)$
of patterns over an arbitrary set $X$. In this case the PR-witness ultrafilter will be an ultrafilter
on $X$, and the $u$-equivalence relation will be the one defined on $\hX$ by letting
$\xi\ueq\eta$ if and only if $\xi\in\hA\Leftrightarrow\eta\in\hA$ for all $A\subseteq X$.
\end{remark} 

For example, recall the following classic results in arithmetic Ramsey theory:
\begin{itemize}
\item
\emph{Schur's Theorem}: For every finite coloring of the natural numbers
there exists a triple of the form $a, b, a+b$ which is monochromatic.
In other words, the family $\PP=\{\{a<b<a+b\}\mid a<b\}$ is partition regular.
\item
\emph{Van der Waerden's Theorem}: For every finite coloring of the natural numbers
and for every $\ell$ there exists a monochromatic arithmetic progression of length $\ell$.
This means that for every $\ell$ the family
$\PP=\{\{a, a+d,\ldots, a+(\ell-1)d\}\mid a,d\in\N\}$ is partition regular.
\item
\emph{Folkman's Theorem}: For every finite coloring of the natural numbers 
and for every $\ell$, there exist numbers $a_1<\ldots<a_\ell$ such that
all sums of distinct elements are monochromatic. 
This means that the family $\PP=\{\text{FS}(a_1,\ldots,a_\ell)\mid a_1<\ldots<a_\ell\}$
is partition regular, where
$\text{FS}(a_1,\ldots,a_\ell):=\{a_{n_1}+\ldots+a_{n_k}\mid n_1<\ldots<n_k\le\ell\}$.\footnote
{Clearly, Schur's Theorem is the particular case of Folkman's Theorem when $\ell=2$.}
\end{itemize}

By using the nonstandard characterizations, we have that:

\begin{itemize}
\item
\emph{Schur's Theorem} states the existence of hypernatural numbers
$\xi<\eta$ such that $\xi\ueq\eta\ueq\xi+\eta$.
\item
\emph{Van der Waerden's Theorem} states the existence, for any given $\ell\in\N$,
of hypernatural numbers $\alpha,\delta$ such that
$\alpha\ueq\alpha+\delta\ueq\alpha+2\delta\ueq\ldots\ueq\alpha+(\ell-1)\delta$.
\item
\emph{Folkman's Theorem} states the existence, for every $\ell\in\N$,
of hypernatural numbers $\alpha_1<\ldots<\alpha_\ell$ such that
the finite sums $\alpha_{i_1}+\ldots+\alpha_{i_k}$ for $i_1<\ldots<i_k\le\ell$ are
$\ueq$-equivalent to each other.
\end{itemize}

Since the last two properties hold for arbitrarily large $\ell\in\N$, 
an ``overspill" phenomenon occurs, so they can be strengthened as follows.\footnote
{~In fact, \emph{overspill} (or \emph{overflow}) is a general principle in nonstandard analysis according to
which, if an elementary property holds for all finite $n\in\N$, then it also holds
for all $\nu\le L$ for a suitable infinite $L\in\hN$.}

\begin{itemize}
\item
\emph{Van der Waerden's Theorem} states the existence
of hypernatural numbers $\alpha,\delta$ such that
$\alpha\ueq\alpha+\ell\delta$ for every $\ell\in\N$.
\item
\emph{Folkman's Theorem} states the existence
of a sequence $(\alpha_i\mid i\in\N)$ of hypernatural numbers 
such that the finite sums $\alpha_{i_1}+\ldots+\alpha_{i_k}$ for $i_1<\ldots<i_k$ are
$\ueq$-equivalent to each other.
\end{itemize}

\begin{remark}
We warn the reader that, although apparently similar,
the above nonstandard formulation of Folkman's Theorem
is substantially different from Hindman's Theorem.\footnote
{See Theorem \ref{hindman}.}
In fact, the nonstandard version of the latter states the existence
of an increasing internal sequence $(\alpha_\nu\mid\nu\in\hN)$ indexed over the
hypernatural numbers, such that all ``hyperfinite" sums $\sum_{\nu\in F}\alpha_\nu$ 
for nonempty $F\in{}^*\text{Fin}(\N)$ are $\ueq$-equivalent to each other.
Clearly, this is a stronger property than the nonstandard Folkman's Theorem.
\end{remark}

The following properties of the equivalence relation $\ueq$ are useful tools in practice, 
as we will show later with some examples.

\begin{theorem}\label{u-equivalenceproperties}
Let $\nu,\mu\in\hN$, and let $f,g:\N\to\N$. Then
\begin{enumerate}
\item
If $\nu\ueq n$ where $n\in\N$ then $\nu=n$.
\item
If $\nu\ueq\mu$ and $\nu\ne\mu$ then $\nu$ and $\mu$ are at infinite distance.
\item
If $\nu\ueq\mu$ then $\hf(\nu)\ueq\hf(\mu)$.
\item
If $\nu\ueq\mu$ and $f$ has finite range, then $\hf(\nu)=\hf(\mu)$.
\item
If $\hf(\nu)\ueq\nu$ then $\hf(\nu)=\nu$.
\item
If $\hf(\nu)\ueq\hg(\nu)$ and $f$ is ``bounded-to-one", \emph{i.e.},
there exists $k$ such that the fibers $|f^{-1}(\{n\})|\le k$ for every $n$,
then $\hf(\nu)=\hg(\nu)$. 
\end{enumerate}
\end{theorem}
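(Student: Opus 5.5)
For items (1)–(4) I would argue directly from the definition of $\ueq$, using \emph{transfer} on suitably chosen sets; items (5) and (6) then reduce, via (3) and (4), to a combinatorial colouring lemma.

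(1) Take $A=\{n\}$. Since $n\in\hA$ and $\nu\ueq n$, we get $\nu\in{}^*\{n\}=\{n\}$, because finite sets equal their hyper-extensions.

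(2) Suppose $\mu=\nu+k$ with $k\in\N$, $k\ge1$. Let $A:=\{x\in\N\mid \lfloor (x-1)/k\rfloor \text{ is even}\}$. Then $\lfloor (x+k-1)/k\rfloor=\lfloor (x-1)/k\rfloor+1$, so the property ``$\forall x\in\N\ (x\in A\leftrightarrow x+k\notin A)$'' holds. By \emph{transfer}, $\nu\in\hA\Leftrightarrow\nu+k\notin\hA$. This contradicts $\nu\ueq\nu+k$.

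(3) Given $A\subseteq\N$, put $B:=f^{-1}(A)=\{x\in\N\mid f(x)\in A\}$. By the coherence proposition, $\hB=\{\xi\in\hN\mid \hf(\xi)\in\hA\}$. Hence $\hf(\nu)\in\hA\Leftrightarrow\nu\in\hB\Leftrightarrow\mu\in\hB\Leftrightarrow\hf(\mu)\in\hA$.

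(4) Let $\mathrm{ran}(f)=\{a_1,\ldots,a_k\}$. By \emph{transfer}, $\hf$ takes only these values, and ${}^*(f^{-1}(a_i))=\hf^{-1}(a_i)$. Now $\nu$ and $\mu$ lie in the same set ${}^*(f^{-1}(a_i))$, so $\hf(\nu)=a_i=\hf(\mu)$.

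(5) This is the step I expect to be the main obstacle; it is the nonstandard form of Katětov's three-colour lemma. I first need a colouring $c:\N\to\{1,2,3\}$ such that $f(x)\ne x\Rightarrow c(f(x))\ne c(x)$. To build it, consider the graph on $\N$ with edges $\{x,f(x)\}$ for $f(x)\ne x$. Every vertex has out-degree at most $1$, so any finite subgraph on $m$ vertices has at most $m$ edges. Hence it has a vertex of degree at most $2$, and induction on $m$ shows that every finite subgraph is $3$-colourable. A $3$-colouring of all of $\N$ then follows by the usual compactness argument: König's lemma on colourings of $\{1,\ldots,n\}$. Alternatively, use \emph{enlargement} on the family of sets of partial colourings, then restrict a hyper-colouring of an initial hyperfinite segment to $\N$. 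With $c$ in hand, \emph{transfer} gives $\hf(\nu)\ne\nu\Rightarrow{}^*c(\hf(\nu))\ne{}^*c(\nu)$. But $c$ has finite range and $\hf(\nu)\ueq\nu$, so (4) yields ${}^*c(\hf(\nu))={}^*c(\nu)$. Therefore $\hf(\nu)=\nu$.

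(6) Since every fiber has at most $k$ elements, partition $\N=B_1\cup\ldots\cup B_k$ so that $f$ is injective on each $B_j$ (enumerate each fiber and put its $j$-th element in $B_j$). Pick the $j$ with $\nu\in{}^*B_j$. Define $h:\N\to\N$ by $h(f(x)):=g(x)$ for $x\in B_j$, which is well defined by injectivity, and $h(y):=y$ for $y\notin f(B_j)$. By \emph{transfer}, $\hg(\xi)={}^*h(\hf(\xi))$ for all $\xi\in{}^*B_j$. Setting $\eta:=\hf(\nu)$, the hypothesis becomes ${}^*h(\eta)\ueq\eta$. Item (5) then gives ${}^*h(\eta)=\eta$, that is, $\hg(\nu)=\hf(\nu)$.
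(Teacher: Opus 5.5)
Your proof is correct and takes essentially the paper's route. The paper proves only item (6), deferring (1)--(5) to \cite[\S 2]{dnultra}, and there it likewise splits $\N$ into at most $k$ pieces on which $f$ is injective, inverts $f$ on the piece whose hyper-extension contains $\nu$, and reduces to (5). Your arguments for (1)--(5), with (5) via the Kat\v{e}tov-type three-colouring, are the standard ones. Your variant in (6), applying (5) to the total function $h=g\circ(f|_{B_j})^{-1}$ at $\hf(\nu)$ rather than to $(f|_{C_i})^{-1}\circ g$ at $\nu$, is slightly cleaner: it needs neither $\hg(\nu)\in{}^*(f(C_i))$ nor an extension of the partial inverse to all of $\N$.
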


\begin{proof}
All proofs of the above properties are found in \cite[\S 2]{dnultra}, with the only exception 
of the last item, which however, can be deduced with similar arguments.
For completeness, let us see this in detail. 

$(6)$. For $n\in\N$, let $s_n:=|f^{-1}(\{n\})|\le k$, and let
$f^{-1}(\{n\})=\{a_{n,1}<\ldots<a_{n,s_n}\}$.
For $i=1,\ldots,k$ let $C_i:=\{a_{n,i}\mid n\in\N\}$, and consider the partition
$\N=C_1\cup\ldots\cup C_k$. Pick $C_i$ such that $\nu\in{}^*C_i$.
We observe that the restriction $h:f|_{C_i}:C_i\to f(C_i)$ is a bijection.
Note that $\hg(\nu)\ueq\hf(\nu)$ implies that also $\hg(\nu)\in{}^*(f(C_i))$, and we have
$\nu={}^*h^{-1}(\hf(\nu))\ueq {}^*h^{-1}(\hg(\nu))$. By property (5), it follows that
${}^*h^{-1}(\hg(\nu))=\nu$, and hence $\hg(\nu)={}^*h(\nu)=\hf(\nu)$, as desired.
\end{proof}

To illustrate how $u$-equivalence can be used in arithmetic Ramsey theory problems,
let us start with the following simple negative property.

Recall that a Diophantine equation $E(X_1,\ldots,X_k)=0$ is called partition regular
if the family of its solutions $\mathfrak{P}=\{\{n_1,\ldots,n_k\}\mid E(n_1,\ldots,n_k)=0\}$
is partition regular.

\begin{theorem}\label{quasi-Pythagoras}
The equation $X^2+Y^2=Z$ is not partition regular on $\N$.\footnote
{This is a particular case of the following fact proved in \cite{dr}:
``If $k\notin\{n,m\}$ then $X^n+Y^m=Z^k$ is not partition regular on $\N$."}
\end{theorem}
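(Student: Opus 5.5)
The plan is to argue by contradiction via the nonstandard characterization of Theorem~\ref{PRcharacterizations}, using one explicit finite-valued function to separate $\zeta$ from $\max\{\xi,\eta\}$.

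Suppose $X^2+Y^2=Z$ were partition regular. By $(1)\Rightarrow(2)$ of Theorem~\ref{PRcharacterizations} there would be hypernatural numbers $\xi,\eta,\zeta$ with $\xi^2+\eta^2=\zeta$ and $\xi\ueq\eta\ueq\zeta$. I would first show that all three are infinite. Since $\zeta>\xi$, we have $\xi\ne\zeta$. If $\xi$ were finite, item (1) of Theorem~\ref{u-equivalenceproperties} would force $\zeta=\xi$, which is impossible; the same argument applies to $\eta$. By symmetry assume $\xi\le\eta$. Then
$$\eta^2<\zeta\le 2\eta^2.$$
The aim is a coloring function of finite range that reads off a ``doubly logarithmic scale'' and necessarily changes value when one passes from $\eta$ to $\zeta\approx\eta^2$.

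Concretely, let $L(n)=\lfloor\log_2 n\rfloor$ and, for $n\ge 2$,
$$f(n)=\lfloor\log_2 L(n)\rfloor \bmod 3,$$
with $f(1)=0$. This function has finite range. By item (4) of Theorem~\ref{u-equivalenceproperties}, $\eta\ueq\zeta$ implies $\hf(\eta)=\hf(\zeta)$. On the other hand, I will show by a finite computation that $\hf(\zeta)\ne\hf(\eta)$. Equivalently, in standard terms, no solution with $y\ge x\ge 2$ has $f(y)=f(x^2+y^2)$, and the transfer of this elementary statement gives $\hf(\eta)\ne\hf(\zeta)$, the desired contradiction.

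The computation is the step needing care. From $\eta^2<\zeta\le2\eta^2$ we get
$$2L(\eta)\le L(\zeta)\le 2L(\eta)+2.$$
Let $k=\lfloor\log_2 L(\eta)\rfloor$, so that $2^k\le L(\eta)\le 2^{k+1}-1$. Then
$$2^{k+1}\le L(\zeta)\le 2^{k+2},$$
hence $\lfloor\log_2 L(\zeta)\rfloor\in\{k+1,k+2\}$. Neither value is congruent to $k$ modulo $3$. This is exactly why I take the residue mod $3$ rather than mod $2$: the boundary case $L(\eta)=2^{k+1}-1$ can make the exponent jump by $2$.

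As a standard by-product, the coloring $n\mapsto f(n)$, with the small numbers $n<4$ given their own extra colors if needed, has no monochromatic triple $x,y,x^2+y^2$. The main obstacle I expect is precisely this off-by-one bookkeeping at powers of $2$; everything else is a direct application of Theorem~\ref{u-equivalenceproperties}.
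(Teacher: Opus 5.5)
Your argument is correct, and it takes a genuinely different route from the paper's.

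The paper argues arithmetically. Since $\alpha\ueq\beta\ueq\gamma$ share parity and odd squares cannot sum to an odd number, all three are even. Item (3) of Theorem~\ref{u-equivalenceproperties} then gives $a\ueq b\ueq c$ for the $2$-adic valuations. The case $a<b$ forces $c=2a$, and the case $a=b$ forces $c=2a+1$. Both are excluded by item (5): if $\hf(\nu)\ueq\nu$ then $\hf(\nu)=\nu$.

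You argue by size instead. From $\eta^2<\zeta\le 2\eta^2$ your bookkeeping $2L(\eta)\le L(\zeta)\le 2L(\eta)+2$ is right, and so is $2^{k+1}\le L(\zeta)\le 2^{k+2}$. Hence $\lfloor\log_2 L\rfloor$ increases by exactly $1$ or $2$ from $\eta$ to $\zeta$, and the residue mod $3$ separates them.

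\emph{What your route buys.} It uses only items (1) and (4), which follow at once from the definition of $\ueq$, rather than the nontrivial item (5). Your mod-$3$ device is essentially a hand-built case of the three-coloring trick behind (5). Once the elementary inequality is in hand, it is really a standard proof with an explicit $4$-coloring. The extra color for $1$ is genuinely needed, since $f(1)=f(2)$ and $1,1,2$ is a solution.

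\emph{What the paper's route buys.} The valuation argument does not depend on the variables having different orders of magnitude. That is why the same $p$-adic and digit technique is reused in Theorems~\ref{2n+m} and~\ref{n^2+m}. There $2\alpha+\beta$ and $\alpha^2+\beta$ lie within a bounded factor of $\beta$, so a growth-based coloring has nothing to grip.
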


\begin{proof}
By contradiction, suppose there exist $\alpha \ueq \beta \ueq \gamma$ such that
$\alpha^2+\beta^2=\gamma$. Note that 
$\alpha,\beta,\gamma$ are even numbers, since
they cannot all be odd. Then we can write
$$\alpha=2^{a}\alpha _{1},\quad \beta =2^{b}\beta _{1},\quad 
\gamma=2^{c}\gamma _{1}$$
where $a,b,c\ge 1$ and $\alpha_1, \beta_1, \gamma_1$ are odd.
By using property (3) of the previous theorem, 
we see that $a\ueq b\ueq c$ and $\alpha_1\ueq\beta_1\ueq\gamma_1$. 
We now distinguish two cases.

\emph{Case 1:} $a\ne b$. Let us assume that $a<b$ (the other case is symmetric).
Note that $2^{2a}(\alpha_1^2+2^{2b-2a}\beta_1^2)=2^{c}\gamma_1$. 
Since $\alpha_1^2+2^{2b-2a}\beta_1^2$ and $\gamma_1$ are odd, it follows
that $2a=c$. Since $c\ueq a$ we have that $2a\ueq a$ and so,
by property (5) of the previous theorem, we must conclude that $2a=a$. This is 
a contradiction because $a\ne 0$. 

\emph{Case 2:} $a=b$. We have $2^{2a}(\alpha_1^2+\beta_1^2)=2^{c}\gamma_1$. 
Since $\alpha_1,\beta_1$ are odd, $\alpha_1^2+\beta_1^2\equiv 2\mod 4$,
and so $2^c\gamma_1=2^{2a+1}\alpha_2$ where $\alpha_2$ is odd.
But then $2a+1=c\ueq a$ and hence it must be $2a+1=a$, a contradiction.
\end{proof}

\section{Iterated hyper-extensions}

In order to find ``positive" results, $u$-equivalence is usually combined with the technique of
iterated hyper-extensions. We remark that, from a foundational point of view, these iterations
are possible because one can construct a star map $*:\mathbb{V}\to\mathbb{V}$ 
from a ``universe" $\mathbb{V}$ into itself.\footnote
{While this can be done with some little effort (see \cite{dj}
and references therein), it is worth remarking that the existence of star maps $*:\mathbb{V}\to\mathbb{V}$ 
where $\mathbb{V}$ is the whole set-theoretic universe raises interesting foundational questions, that have been studied
by the so-called \emph{nonstandard set theories}.
In fact, to construct such universal star maps,
one needs to drop the axiom of regularity, but still maintain
the existence of a Mostowski transitive collapse of
extensional structures. The resulting axiomatic theory can be shown to be equiconsistent with $\text{ZFC}$.}
Such iterations allow for a precise formalization of the intuition of 
``different levels of infinity." Let us see how.

Since $\hN$ is an end-extension of $\N$, \emph{i.e.},
$x<y$ for all $x\in\N$ and $y\in\hN\setminus\N$, by \emph{transfer}
we obtain that the double hyper-extension ${}^{**}\N$ is an end-extension of
$\hN$, the triple hyper-extension ${}^{***}\N$ is an end-extension of
${}^{**}\N$, and so forth.
So, if a number $\alpha\in\hN\setminus\N$ is infinite then
${}^*\alpha\in{}^{**}\N\setminus\hN$, and hence $\alpha<{}^*\alpha$.
Then, by \emph{transfer}, it follows that ${}^*\alpha<{}^{**}\alpha$, and so forth.
This naturally produces an increasing chain of ``different levels of infinity" 
that has proven instrumental in applications.
$$\alpha\ <\ {}^*\alpha\ <\ {}^{**}\alpha\ <\ {}^{***}\alpha\ <\  \ldots$$

The first level of infinity is given by the infinite hypernatural numbers,
that satisfy the following property:
\begin{itemize}
\item
\emph{Let $X\subseteq\N$ and let $\alpha\in\hN\setminus\N$ be infinite.
If $\alpha\in{}^*X$ then the set $X$ is infinite.}
\end{itemize}

The proof is simple. Assume for the sake of contradiction that $X=\{n_1,\ldots,n_k\}\subset\N$ is finite.
Then by \emph{transfer} from:
``$n_1\in X\land\ldots\land n_k\in X\land\ \forall x\in X\ (x=n_1\lor\ldots\lor x=n_k)$"
we obtain that ${}^*X=\{{}^*n_1,\ldots,{}^*n_k\}$.
Since ${}^*n=n$ for every $n\in\N$, we conclude that $\alpha=n_i$ for some $n_i$, a contradiction.

The second level of infinity is given by hyper-extensions ${}^*\alpha$
of infinite numbers $\alpha\in\hN\setminus\N$. 

\begin{theorem}\label{RamseyNS}
Let $X\subseteq\N^2$, and let $\alpha\in\hN\setminus\N$ be infinite.
If $(\alpha,{}^*\alpha)\in{}^{**}X$ then there exists an infinite set $H$ such that 
the set of pairs $[H]^2:=\{(h,h')\mid h<h'\ \text{in} \ H\}\subseteq X$.
\end{theorem}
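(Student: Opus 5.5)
The plan is to build $H=\{h_1<h_2<\ldots\}$ inductively, keeping the invariant that each $h_i$ lies in a ``large'' set, namely one whose hyper-extension contains $\alpha$. For $n\in\N$ let
$$X_n:=\{m\in\N\mid (n,m)\in X\}.$$
The key auxiliary set is
$$Y:=\{n\in\N\mid \alpha\in{}^*X_n\}.$$

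First I will show that $\alpha\in{}^*Y$. By coherence, ${}^*Y=\{\xi\in\hN\mid {}^*\alpha\in{}^{**}X_\xi\}$, where $\xi\mapsto{}^{**}X_\xi$ denotes the hyper-extension of the map $n\mapsto {}^*X_n$. Since $(\alpha,{}^*\alpha)\in{}^{**}X$, transferring the definition $X_n=\{m\mid(n,m)\in X\}$ twice gives ${}^*\alpha\in({}^{**}X)_\alpha$ at the second level. That fiber is exactly the one obtained by applying $*$ to the map $n\mapsto{}^*X_n$ and evaluating at $\alpha$. Hence $\alpha\in{}^*Y$. This bookkeeping with iterated stars is the step I expect to need the most care. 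The cleanest route is to transfer the elementary statement ``$\forall n\in\N\ (n\in Y\leftrightarrow \alpha\in{}^*X_n)$'', in which $\alpha$ and the family $\langle {}^*X_n\rangle_n$ are parameters. This yields ``$\forall\xi\in\hN\ (\xi\in{}^*Y\leftrightarrow{}^*\alpha\in{}^{**}X_\xi)$'', and we then take $\xi=\alpha$, using that ${}^{**}X_\alpha=\{\eta\mid(\alpha,\eta)\in{}^{**}X\}$ by transfer of the fiber definition.

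For the induction, choose $h_1\in Y$; this is possible since $\alpha\in{}^*Y$ forces $Y$ to be infinite, in particular nonempty. Suppose $h_1<\ldots<h_k$ have been chosen in $Y$ with $(h_i,h_j)\in X$ for $i<j$. Consider
$$Z:=Y\cap\bigcap_{i\le k}X_{h_i}\cap(h_k,+\infty).$$
Each $h_i\in Y$ gives $\alpha\in{}^*X_{h_i}$. We also have $\alpha\in{}^*Y$, and $\alpha\in{}^*(h_k,+\infty)$ because $\alpha$ is infinite. Since hyper-extension commutes with finite intersections, $\alpha\in{}^*Z$, so by the first-level property stated before the theorem $Z$ is infinite, and in particular nonempty. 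Pick $h_{k+1}\in Z$. Then $h_{k+1}>h_k$, $h_{k+1}\in Y$ so the invariant persists, and $(h_i,h_{k+1})\in X$ for all $i\le k$.

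The resulting $H=\{h_k\mid k\in\N\}$ is infinite and satisfies $[H]^2\subseteq X$. The inductive step is routine; the only genuine obstacle is justifying $\alpha\in{}^*Y$, that is, correctly ``unstarring'' the second coordinate ${}^*\alpha$ through the double extension. I would write that step explicitly via transfer of the fiber formula, as above.
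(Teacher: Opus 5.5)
Your proof is correct and is essentially the paper's: your set $Y$ is the paper's $\Lambda=\{n\in\N\mid (n,\alpha)\in\hX\}$, since $(n,\alpha)\in\hX\Leftrightarrow\alpha\in{}^*X_n$, and your inductive choice of $h_{k+1}>h_k$ in $Y\cap X_{h_1}\cap\ldots\cap X_{h_k}$ is the same as the paper's. Defining $Y$ via $(n,\alpha)\in\hX$, as the paper does, gives ${}^*Y=\{x\in\hN\mid (x,{}^*\alpha)\in{}^{**}X\}$ by a single transfer, so you avoid the bookkeeping with the family $n\mapsto{}^*X_n$.
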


\begin{proof}
For every $h\in\N$ denote by 
$X_h:=\{n\in\N\mid (h,n)\in X\}$ the $h$-fiber of $X$.
Let $\Lambda:=\{n\in\N\mid (n,\alpha)\in\hX\}$, and observe that its hyper-extension is the set
${}^*\Lambda=\{x\in\hN\mid (x,{}^*\alpha)\in{}^{**}X\}$.
By the hypothesis $\alpha\in{}^*\Lambda$ and hence $\Lambda$ is infinite.
Pick $h_1\in\Lambda$. Then $(h_1,\alpha)\in\hX$, and so $\alpha\in{}^*(X_{h_1})$.
Since $\alpha\in{}^*(\Lambda\cap X_{h_1})$ we can pick $h_2>h_1$ 
in $\Lambda\cap X_{h_1}$. In particular, $(h_1,h_2)\in X$.
Since $(h_2,\alpha)\in\hX$, we have that $\alpha\in{}^*(X_{h_2})$,
and hence $\alpha\in{}^*(\Lambda\cap X_{h_1}\cap X_{h_2})$.
Then we can pick $h_3>h_2$ in the set $\Lambda\cap X_{h_1}\cap X_{h_2}$.
In particular, $(h_1,h_3),(h_2,h_3)\in X$.
By inductively iterating the process we obtain the desired infinite set $H:=\{h_n\mid n\in\N\}$.
\end{proof}

Clearly, the above property correspond to \emph{Ramsey's Theorem} for pairs.

\begin{corollary}[Ramsey's Theorem for pairs]
Let $[\N]^2=C_1\cup\ldots\cup C_r$ be a finite coloring of the pairs of natural numbers.
Then there exists an infinite set $H$ such that the pairs $[H]^2\subseteq C_i$ 
are monochromatic.\footnote
{Here we are identifying the set of (unordered) pairs $[\N]^2:=\{\{n,m\}\subseteq \N\mid n\ne m\}$
with the set of (ordered) pairs $\{(n,m)\in \N\times\N\mid n<m\}$.}
\end{corollary}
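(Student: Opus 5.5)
The plan is to reduce the corollary to Theorem \ref{RamseyNS}. Fix an infinite $\alpha\in\hN\setminus\N$; one exists because $\hN$ is a proper superset of $\N$. Consider the pair $(\alpha,{}^*\alpha)$. It lies in ${}^{**}\N\times{}^{**}\N$, and $\alpha<{}^*\alpha$ because ${}^{**}\N$ is an end-extension of $\hN$ and ${}^*\alpha\notin\hN$. Identify each color $C_i$ with a set of ordered pairs $(n,m)$ with $n<m$, as in the footnote. Then
$$\{(n,m)\in\N^2\mid n<m\}=C_1\cup\ldots\cup C_r .$$

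Next, apply \emph{transfer} twice to this finite union, using the facts that hyper-extensions commute with finite unions and with sets defined by elementary properties. This gives
$$\{(x,y)\in{}^{**}\N^2\mid x<y\}={}^{**}C_1\cup\ldots\cup{}^{**}C_r .$$
Since $\alpha<{}^*\alpha$, the pair $(\alpha,{}^*\alpha)$ lies in the left-hand side. Hence $(\alpha,{}^*\alpha)\in{}^{**}C_i$ for some $i$.

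Finally, apply Theorem \ref{RamseyNS} with $X:=C_i\subseteq\N^2$. It yields an infinite $H\subseteq\N$ such that $[H]^2=\{(h,h')\mid h<h'\in H\}\subseteq C_i$, which is exactly the monochromatic infinite set required.

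The only point needing care is the double transfer. The identity ${}^*(A_1\cup\ldots\cup A_r)={}^*A_1\cup\ldots\cup{}^*A_r$ is applied first with the star map and then again to the starred sets; since $*$ is a map of the universe into itself, the same elementary property transfers at each level. No genuine obstacle is expected.
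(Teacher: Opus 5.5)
Your proposal is correct and follows essentially the same route as the paper: pick an infinite $\alpha$, take the color $C_i$ with $(\alpha,{}^*\alpha)\in{}^{**}C_i$, and apply Theorem \ref{RamseyNS}. The only difference is that you justify why such a color exists (using $\alpha<{}^*\alpha$ and ${}^{**}(C_1\cup\ldots\cup C_r)={}^{**}C_1\cup\ldots\cup{}^{**}C_r$ equal to the upper diagonal of ${}^{**}\N^2$), a step the paper leaves implicit.
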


\begin{proof}
Pick any infinite $\alpha\in\hN\setminus\N$, and let $C_i$ be the color
such that $(\alpha,{}^*\alpha)\in{}^{**}C_i$. Then apply the previous theorem.
\end{proof}

In a similar way, one can also prove the general case of Ramsey's Theorem,
as a straight consequence of the following property.

We write ${}^{n*}A$ to denote
the $n$-th iterated hyper-extension of an object $A$.

\begin{itemize}
\item
\emph{Let $k\in\N$, let $X\subseteq\N^k$, and let $\alpha\in\hN\setminus\N$ be infinite.
If $(\alpha,{}^*\alpha,\ldots,{}^{(k-1)*}\alpha)\in{}^{k*}X$ 
then there exists an infinite set $H$ such that}
$$[H]^k=\{(h_1,\ldots,h_k)\in H^k\mid h_1<\ldots<h_k\}\subseteq X.$$
\end{itemize}

The $u$-equivalence is extended in a natural way to numbers that
belongs to iterated hyper-extensions of the natural numbers.
For example, if $\alpha\in{}^{**}\N$ and $\beta\in{}^{****}\N$, then
$\alpha\,\ueq\,\beta$ means that $\alpha\in{}^{**}A\Leftrightarrow\beta\in{}^{****}A$ for every $A\subseteq\N$.

\smallskip
The different levels of infinity are compatible with $u$-equivalence.

\begin{proposition}
Let $\alpha\in\hN$. Then $\alpha\,\ueq\,{}^{k*}\alpha$ for every $k\in\N$.
\end{proposition}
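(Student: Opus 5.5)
The plan is to prove the case $k=1$ directly from \emph{transfer}, and then to reach the general case by induction on $k$.

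\textbf{Base case $k=1$.} Fix $A\subseteq\N$. We must show that $\alpha\in\hA$ if and only if ${}^*\alpha\in{}^{**}A$. The key point is to apply the star map to the membership statement ``$\alpha\in\hA$'' itself. Here the objects involved are $\alpha$ and $\hA$, and the iterated setting (the star map $*:\mathbb{V}\to\mathbb{V}$) permits us to take hyper-extensions of these nonstandard objects as well.
\begin{itemize}
\item By \emph{transfer}, ``$\alpha\in\hA$'' holds if and only if ``${}^*\alpha\in{}^*(\hA)$'' holds.
\item By definition, ${}^*(\hA)={}^{**}A$.
\end{itemize}
Hence $\alpha\in\hA\Leftrightarrow{}^*\alpha\in{}^{**}A$. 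This is precisely the extended $u$-equivalence $\alpha\ueq{}^*\alpha$ for elements of different levels, as defined just above.

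\textbf{Inductive step.} Suppose $\alpha\ueq{}^{k*}\alpha$, that is, $\alpha\in\hA\Leftrightarrow{}^{k*}\alpha\in{}^{(k+1)*}A$ for every $A$. Apply \emph{transfer} once more to the membership ${}^{k*}\alpha\in{}^{(k+1)*}A$. This gives
$${}^{k*}\alpha\in{}^{(k+1)*}A\Longleftrightarrow{}^{(k+1)*}\alpha\in{}^{(k+2)*}A.$$
Chaining this equivalence with the inductive hypothesis yields $\alpha\ueq{}^{(k+1)*}\alpha$.

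Alternatively, the inductive step can be avoided by noting that the iterated map ${}^{k*}$ is itself a star map satisfying \emph{transfer}. Then ${}^{k*}$ applied to the statement ``$\alpha\in\hA$'' gives the result in one step.

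\textbf{Main obstacle.} The only delicate point is foundational: one must justify that \emph{transfer} applies to statements whose parameters are nonstandard objects such as $\alpha$ and $\hA$, not only to subsets of $\N$. This is exactly what the universal star map $*:\mathbb{V}\to\mathbb{V}$ from the footnote provides, since every object, including elements of $\hN$ and hyper-extensions, has a hyper-extension and elementary properties are preserved. I would state this explicitly as the justification. Once it is granted, the computation above is immediate.
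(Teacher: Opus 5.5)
Your proof is correct and is essentially the paper's argument. The paper also obtains $\alpha\in\hA\Leftrightarrow{}^*\alpha\in{}^{**}A\Leftrightarrow\cdots\Leftrightarrow{}^{k*}\alpha\in{}^{(k+1)*}A$ by repeated applications of \emph{transfer}; your induction (and your remark that \emph{transfer} must apply to nonstandard parameters via the star map $*:\mathbb{V}\to\mathbb{V}$) just spells out what the paper leaves implicit.
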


\begin{proof}
For every $A\subseteq\N$, by repeated applications of \emph{transfer} one 
obtains the following equivalences:
$$\alpha\in\hA\Leftrightarrow{}^*\alpha\in{}^{**}A\Leftrightarrow\ \ldots\ 
\Leftrightarrow{}^{*k}\alpha\in{}^{(k+1)*}A.$$
\end{proof}

Later, we will use the following facts.

\begin{theorem}
Let $\alpha_i,\beta_i\in\hN\setminus\N$ be such that $\alpha_i\ueq\beta_i$ for every $i\in\N_0$.
Then also the following sums are $\ueq$-equivalent:
\begin{itemize}
\item
$\alpha_0+{}^*\alpha_1\ \ueq\ \beta_0+{}^*\beta_1$.
\item
$\alpha_0+{}^*\alpha_1+{}^{**}\alpha_2\ \ueq\ \beta_0+{}^*\beta_1+{}^{**}\beta_2$.
\item
$\alpha_0+{}^*\alpha_1+{}^{**}\alpha_2+{}^{***}\alpha_3\ \ueq\ 
\beta_0+{}^*\beta_1+{}^{**}\beta_2+{}^{***}\beta_3$, and so forth.
\end{itemize}
\end{theorem}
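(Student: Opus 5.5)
We prove the first item by unwinding what $u$-equivalence says about the sum $\alpha_0+{}^*\alpha_1$, and then obtain the longer sums by induction. The key observation is that membership of $\alpha_0+{}^*\alpha_1$ in ${}^{**}A$ can be expressed through a standard set of natural numbers determined by $\alpha_1$ alone. Note first that $\alpha_0+{}^*\alpha_1\in{}^{**}\N$, and we want $\alpha_0+{}^*\alpha_1\in{}^{**}A\Leftrightarrow\beta_0+{}^*\beta_1\in{}^{**}A$ for every $A\subseteq\N$.

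Fix $A\subseteq\N$ and consider the set
$$\Lambda:=\{n\in\N\mid \alpha_1\in{}^*(A-n)\}=\{n\in\N\mid n+\alpha_1\in\hA\}.$$
This is a subset of $\N$, and it depends only on the $u$-equivalence class of $\alpha_1$. Indeed, for each $n$ we have $n+\alpha_1\in\hA$ if and only if $\alpha_1\in{}^*(A-n)$. Since $\alpha_1\ueq\beta_1$, the same $\Lambda$ is obtained with $\beta_1$ in place of $\alpha_1$.

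Now apply \emph{transfer} to the elementary statement
$$\forall n\in\N\ \ (n\in\Lambda\leftrightarrow n+\alpha_1\in\hA).$$
Here $\Lambda$ and $\hA$ are parameters and $\alpha_1$ is an element of $\hN$. Transferring with the star map yields
$$\forall x\in\hN\ \ (x\in{}^*\Lambda\leftrightarrow x+{}^*\alpha_1\in{}^{**}A).$$
Taking $x=\alpha_0$ gives $\alpha_0+{}^*\alpha_1\in{}^{**}A\Leftrightarrow\alpha_0\in{}^*\Lambda$. In the same way, $\beta_0+{}^*\beta_1\in{}^{**}A\Leftrightarrow\beta_0\in{}^*\Lambda$, using the same $\Lambda$. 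Finally, $\alpha_0\ueq\beta_0$ gives $\alpha_0\in{}^*\Lambda\Leftrightarrow\beta_0\in{}^*\Lambda$, which proves the first item.

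For the longer sums we proceed by induction on the number of summands. Write the sum with $k+1$ summands as
$$\alpha_0+{}^*\bigl(\alpha_1+{}^*\alpha_2+\ldots+{}^{(k-1)*}\alpha_k\bigr)=\alpha_0+{}^*\gamma,$$
using that ${}^*$ commutes with addition, which holds by \emph{transfer}. By the induction hypothesis, $\gamma\ueq\gamma'$, where $\gamma'$ is the analogous sum built from the $\beta_i$. The argument above then applies verbatim with $\alpha_1,\beta_1$ replaced by $\gamma,\gamma'$, which now lie in ${}^{k*}\N$. The set $\Lambda=\{n\mid n+\gamma\in{}^{k*}A\}$ is again a standard subset of $\N$, and transferring once more gives the membership in ${}^{(k+1)*}A$.

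The only delicate step is the bookkeeping of levels. Since $u$-equivalence has been extended to elements of iterated hyper-extensions, we must check that transfer applied to a formula containing the parameter $\gamma\in{}^{k*}\N$ yields ${}^*\gamma$ and ${}^{(k+1)*}A$. Beyond that, the argument never uses that the $\alpha_i$ are infinite; that hypothesis is harmless and can be ignored.
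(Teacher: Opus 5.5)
Your proof is correct and follows the paper's argument. The paper also uses $\Lambda=\{x\in\N\mid x+\alpha_1\in\hA\}$, notes via ${}^*(A-x)$ that $\Lambda$ is unchanged when $\alpha_1$ is replaced by $\beta_1$, and concludes from $\alpha_0\ueq\beta_0$ together with ${}^*\Lambda=\{x\in\hN\mid x+{}^*\alpha_1\in{}^{**}A\}$; your explicit induction through $\alpha_0+{}^*\gamma$ simply spells out what the paper dismisses as ``entirely similar'' for the longer sums.
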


\begin{proof}
We will only prove the first property; the other ones are entirely similar.
Fix a set $A\subseteq\N$. If $\alpha_0+{}^*\alpha_1\in{}^{**}A$
then $\alpha_0\in{}^*\Lambda$ where
$\Lambda:=\{x\in\N\mid x+{}^*\alpha_1\in{}^{**}A\}$, and hence
$\beta_0\in{}^*\Lambda$. Now observe that 
\begin{multline*}
\Lambda=\{x\in\N\mid x+\alpha_1\in\hA\}=\{x\in\N\mid \alpha_1\in{}^*(A-x)\}=
\\
=\{x\in\N\mid \beta_1\in{}^*(A-x)\}=\{x\in\N\mid \beta_1+x\in\hA\}.
\end{multline*}
So, $\beta_0\in{}^*\Lambda$ means that $\beta_0+{}^*\beta_1\in{}^{**}A$.
\end{proof}

\noindent
\emph{Caution}: If we do not use different levels of infinity, $u$-equivalence 
is not preserved. In fact, in general, $\alpha\ueq\beta$ and $\alpha'\ueq\beta'$
do not imply that $\alpha+\alpha'\,\ueq\,\beta+\beta'$.

\smallskip
The previous theorem also holds for multiplication in place of addition, 
and more generally, for arbitrary functions.
That is, if $\alpha_i\ueq\beta_i$ and $f:\N^k\to\N$, then 
$$f(\alpha_0,{}^*\alpha_1,\ldots,{}^{(k-1)*}\alpha_{k-1})\,\ueq\,
f(\beta_0,{}^*\beta_1,\ldots,{}^{(k-1)*}\beta_{k-1}).$$

Let us see an example of application of different levels of infinity.
We will show the existence of monochromatic exponential triples,
by using the multiplicative version of \emph{Brauer's Theorem} as a starting point.\footnote
{Recall that Brauer's Theorem is the extension of van der Waerden's Theorem
where also the common distance is assumed to be of the same color
of the arithmetic progression.}

\begin{theorem}\label{exponential}
In every finite coloring of  $\N$ there exists a monochromatic
exponential triple $x, y, y^x$.\footnote
{~This result is proved in \cite{dra0} by using ultrafilters. However, it
was originally proved by nonstandard methods along the lines presented here.}
\end{theorem}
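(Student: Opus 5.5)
The plan is to reduce the exponential configuration to a multiplicative one by restricting to powers of $2$, and then use Brauer's Theorem together with two levels of infinity. Fix a coloring $\N=C_1\cup\ldots\cup C_r$ and look for a monochromatic triple of the special form
$$x=2^k,\qquad y=2^m,\qquad y^x=2^{m\cdot 2^k}.$$
Let $d$ be the induced coloring $d(n):=c(2^n)$, where $c(n)=i$ means $n\in C_i$. Then it suffices to find $k,m\in\N$ with
$$d(k)=d(m)=d(m\cdot 2^k).$$
This is a single coloring, $d$, evaluated on the triple $k,\ m,\ m\cdot 2^k$, which is a ``twisted'' multiplicative configuration: the ratio between $m\cdot2^k$ and $m$ is $2^k$, not $k$. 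By Theorem \ref{PRcharacterizations} (applied to the coloring $d$), it is enough to find hypernatural numbers, possibly at different levels of infinity, of the form $\kappa\ueq\mu\ueq\mu\cdot 2^{\kappa'}$ with $\kappa'$ an iterated hyper-extension of $\kappa$. Backward \emph{transfer} (for the appropriate number of stars) then returns standard $k,m$.

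To produce such numbers I would start from the nonstandard form of Brauer's Theorem in its multiplicative version, with overspill. For the coloring $d'(n):=d(2^n)$, Brauer gives $\alpha,\delta\in\hN$ with $\delta\ueq\alpha\ueq\alpha\cdot\delta^{\ell}$ for all finite $\ell$. I would translate this back through $n\mapsto 2^n$ using property (3) of Theorem \ref{u-equivalenceproperties}, so that the $u$-equivalence is transported to $2^\delta\ueq2^\alpha\ueq 2^{\alpha\delta^\ell}$. I would then put the ``exponent'' variable at a higher level: take $\kappa:=\delta$, and $\mu:=2^{{}^*\alpha}$ or $\mu:={}^*\alpha\cdot 2^{\delta}$. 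Next I would use the compatibility of $u$-equivalence with sums and products across levels (the theorem preceding this statement, in its multiplicative form for arbitrary functions $f(\alpha_0,{}^*\alpha_1)$) together with $\alpha\ueq{}^{*}\alpha$. The aim is to check that $\kappa$, $\mu$ and $\mu\cdot 2^{{}^*\kappa}$ all lie in the same ${}^{**}D_i$, where $D_i=\{n\mid d(n)=i\}$.

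The main obstacle is exactly this matching step: Brauer controls the color of the ratio $\delta$ and of the progression $\alpha\delta^\ell$ under one coloring, whereas the target needs the color of $k$ and the color of the ratio $2^k$ to be linked through $d$. The first thing I would try is to apply Brauer not to $d$ but to the product coloring $n\mapsto (d(n),d(2^n))$. This gives $\delta\ueq\alpha\ueq\alpha\delta$ simultaneously for $d$ and for $d\circ(2^{\,\cdot})$, so that both the ``$k$'' and the ``$2^k$'' information is carried by the same $u$-class. I would then place $\alpha$ and ${}^*\alpha$ on two levels so that the sum and product rules apply: the relevant quantity $m+k$ in the exponent of $2$ becomes a sum $\alpha_0+{}^*\alpha_1$ of $u$-equivalent numbers at different levels, to which the sum theorem applies directly. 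If this bookkeeping does not close, the fallback is to search for $m$ itself as a power of $2$ as well, reducing once more to an additive condition on exponents, where Folkman or Schur type $u$-equivalences at distinct levels are available.
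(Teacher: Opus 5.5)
\textbf{There is a genuine gap: the step that has to produce the hypernatural numbers is never carried out.}

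Your reduction to powers of $2$ is correct, but it only restates the problem. You still need $\kappa\ueq\mu\ueq\mu\cdot 2^{\kappa}$, a configuration whose multiplier is an exponential function of one of its own elements, and the proposal leaves exactly this open. The candidates you list do not work:
\begin{itemize}
\item With $\kappa=\delta\ueq\alpha$ and $\mu=2^{{}^*\alpha}\ueq 2^{\alpha}$, property (5) of Theorem \ref{u-equivalenceproperties} forbids $2^\alpha\ueq\alpha$. So $\kappa$ and $\mu$ cannot be $u$-equivalent.
\item With $\mu={}^*\alpha\cdot 2^{\delta}$ you would need $2^\delta\cdot{}^*\alpha\ueq\delta$. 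This ties the class of $2^\delta$ to that of $\delta$, and nothing in Brauer's theorem gives it.
\end{itemize}
Refining to the product coloring $n\mapsto(d(n),d(2^n))$ adds nothing, because $\alpha\ueq\delta\ueq\alpha\delta^\ell$ already refers to all subsets of $\N$ at once. The real obstruction is that you use Brauer only with a finite exponent $\ell$. A fixed finite $\ell$ can never produce a ratio that depends on another element of the configuration. The fallback $m=2^j$ leads to $d(k)=e(j)=e(j+k)$ with $e(n)=d(2^n)$. This couples two different colorings through the same exponential twist, so Schur- or Folkman-type equivalences do not apply to it directly.

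\textbf{The missing idea} is to push Brauer to an \emph{infinite} exponent by \emph{transfer}, and to take $x,y$ themselves as powers of the Brauer ratio. Then the exponent of $y^x$ becomes a Brauer term with infinitely many steps.
\begin{itemize}
\item If $\alpha\in\hA$, the set $\{\ell\in\N\mid \alpha\cdot\delta^\ell\in\hA\}$ is all of $\N$. So its hyper-extension is $\hN$, which means ${}^*\alpha\cdot({}^*\delta)^\nu\in{}^{**}A$ for every $\nu\in\hN$. In particular ${}^*\alpha\cdot({}^*\delta)^{\alpha}\ueq{}^*\alpha$.
\item Let $x:=({}^*\delta)^\alpha$ and $y:={}^*x=({}^{**}\delta)^{{}^*\alpha}$. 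Then $x\ueq y$ automatically.
\item Also $y^x=({}^{**}\delta)^{{}^*\alpha\cdot({}^*\delta)^\alpha}\ueq({}^{**}\delta)^{{}^*\alpha}=y$. This holds because for $z\in{}^{**}\N$ one has $({}^{**}\delta)^z\in{}^{***}A$ if and only if $z\in{}^{**}\Lambda$, where $\Lambda=\{n\in\N\mid\delta^n\in\hA\}$. So $z\mapsto({}^{**}\delta)^z$ preserves $u$-equivalence.
\end{itemize}
This is the paper's proof. It also solves your reduced problem. Take a Brauer witness on powers of $2$, namely $\alpha=2^a$ and $\delta=2^\epsilon$ with $a\ueq\epsilon\ueq a+\ell\epsilon$ for all $\ell\in\N$ (additive Brauer on the exponents). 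Then $k:={}^*\epsilon\cdot\alpha$ and $m:={}^*k$ satisfy $k\ueq m\ueq m\cdot 2^{k}$. Without this infinite-exponent step, however, your plan does not close.
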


\begin{proof}
By multiplicative Brauer's Theorem there exist
$\alpha,\delta\in\hN$ such that $\alpha\ueq\delta\ueq\alpha\cdot\delta^\ell$ for every $\ell\in\N$.
By \emph{transfer} it follows that 
${}^*\alpha\ueq{}^*\delta\ueq{}^*\alpha\cdot{}^*\delta^\nu$ for every $\nu\in\hN$,
and in particular ${}^*\alpha\ueq{}^*\alpha\cdot{}^*\delta^\alpha$.
Now let $\xi:=({}^*\delta)^\alpha$ and $\eta:={}^*\xi=({}^{**}\delta)^{{}^*\alpha}$. 
Then $\xi\ueq\eta$, and besides,
$$\eta^\xi=({}^{**}\delta)^{{}^*\alpha\cdot{}^*\delta^\alpha}\ueq({}^{**}\delta)^{{}^*\alpha}=\eta.$$
Since $\xi\ueq\eta\ueq\eta^\xi$, we obtained the desired result.
\end{proof}

The arguments used in the previous proof, as always happens when using iterated 
hyper-extensions and $u$-equivalence, can be reformulated in the language of ultrafilters.
In this case, one still obtains a fairly simple proof, that can be found in
\S \ref{sec-ultrafilters} (see Remark \ref{ultraexponential});
however, in other cases the translation can become much more complex and involved.

\section{Hypernatural numbers and ultrafilters}\label{sec-ultrafilters}

\emph{Note to the reader}: 
Those who are unfamiliar with ultrafilters or simply dislike them can safely skip this section, 
as well as the remarks on ultrafilters found in subsequent sections.
Indeed, the sole aim here is to highlight the connections between the use of
hypernatural numbers and the use of ultrafilters in the study of problems in 
arithmetic Ramsey theory.

\smallskip
As already suggested by the material presented so far, the connection between 
the hypernatural numbers $\hN$ of nonstandard analysis
and Ramsey problems is very strong. This is based on the observation
that -- in a precise sense -- every hypernatural number corresponds 
to an ultrafilter on $\N$; furthermore, the sums of ultrafilters have a precise counterpart
as sums of numbers in iterated hyperextensions of $\N$.
Given that the algebra on the space of ultrafilters $\beta\N$ has produced
an enormous amount of results in this area (see the volume \cite{hs}),
it is not surprising that useful applications can also be found using hypernatural numbers.

A significant advantage of the nonstandard approch, compared to working 
with ultrafilters, is that in many respects they behave 
like natural numbers; in fact, $\hN$ is the positive part of a discretely ordered 
commutative ring and satisfies exactly the same ``elementary" (\emph{i.e.}, first-order) properties 
as the set of natural numbers. 
This leads to a simpler formalism that, in practice, allows for much easier handling with respect to 
than the algebraic manipulations of ultrafilters.

Another advantage of working with hypernatural numbers is that
it often facilitates heuristic processing;
in fact, the nonstandard framework offers a different perspective on problems, 
and this has helped guide solutions to problems on several occasions.
We will provide several examples later to illustrate this point.

We remark that the connection between hypernatural numbers $\hN$ and ultrafilters
is just a special case of a more general picture.
Indeed, the entire field of nonstandard analysis can be seen
as a general and uniform method that incorporates ultrafilter techniques.
As demonstrated by a vast literature, applications of ultrafilters
and ultrapowers are found throughout mathematics, including model theory, 
set theory, topology, algebra, functional analysis, as well as
various aspects of combinatorics and Ramsey theory; 
all of these applications find a natural place in the context of 
nonstandard analysis (see the book \cite{lw} for an overview).

Let us start by recalling the fundamental notions.

\begin{definition}
An \emph{ultrafilter} $\U$ on $\N$ is a family of subsets such that:
\begin{itemize}
\item
$\emptyset\notin\U$ and $\N\in\U$.
\item
If $A,B\in\U$ then also $A\cap B\in\U$.
\item
If $A\in\U$ and $B\supseteq A$ then also $B\in\U$.
\item
For every $A\subseteq\N$, either $A\in\U$ or the complement $A^c\in\U$.
\end{itemize}
\end{definition}

Here we focus on ultrafilters on $\N$, but clearly the same properties as above
are used to define ultrafilters on arbitrary sets $I$.

Families that satisfy the first three properties above are called \emph{filters}.
It is shown that ultrafilters are precisely the maximal filters with respect to inclusion.

A fundamental example is the \emph{Fr\'echet filter} 
$\mathcal{F}=\{A\subseteq\N\mid A^c\ \text{is finite}\}$ of the cofinite sets.
Trivial examples of ultrafilters are given by the \emph{principal ultrafilters}
$\UU_n=\{A\subseteq\N\mid n\in A\}$ that are ``generated" by elements $n\in\N$. 
It is easily shown that an ultrafilter $\U$ is non-principal if and only if it only contains infinite sets
if and only if it extends the Fr\'echet filter.
 
The existence of non-principal ultrafilters
is proved by a simple application of \emph{Zorn's Lemma}; in fact,
one observes that non-principal ultrafilters are precisely the maximal elements
in the poset given by family of filters extending the Fr\'echet filter, 
and where the partial order is the inclusion.

The initial observation that builts a bridge to the nonstandard context
is the fact that every hypernatural number uniquely determines an ultrafilter on $\N$, 
similarly to how natural numbers generate principal ultrafilters.

\begin{definition}
The \emph{ultrafilter generated} by a point $\nu\in\hN$ is the family of sets:
$$\UU_\nu:=\{A\subseteq\N\mid\nu\in\hA\}.$$
\end{definition}

Since hyper-extensions are coherent with unions, intersections, and complements,
it is easy to verify that $\UU_\nu$ is actually an ultrafilter.
Besides, it is also easily checked that $\UU_\nu$ is non-principal if and 
only if $\nu\in\hN\setminus\N$ is infinite. 

By \emph{enlargement}, the points of $\hN$ generate all possible ultrafilters on $\N$.

\begin{proposition}
Assume $\mathfrak{c}$-enlargement. Then for every ultrafilter $\U$ on $\N$
there exists $\nu\in\hN$ such that $\UU_\nu=\U$.
\end{proposition}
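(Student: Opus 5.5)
Since $\U$ is an ultrafilter, it is closed under finite intersections and contains no empty set. Therefore the family $\G:=\U=\{A\mid A\in\U\}$ has the finite intersection property: for $A_1,\ldots,A_n\in\U$ the intersection $A_1\cap\ldots\cap A_n$ belongs to $\U$, and hence is nonempty.

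The index set of $\G$ is $\U\subseteq\PP(\N)$, so its cardinality is at most $|\PP(\N)|=\mathfrak{c}$. This means the $\mathfrak{c}$-enlarging property applies directly and yields an element
$$\nu\in\bigcap_{A\in\U}\hA .$$
The inclusion $\hA\subseteq\hN$ holds for every $A\subseteq\N$; it follows by \emph{transfer} from $A\subseteq\N$, i.e. from ``$\forall x\ (x\in A\to x\in\N)$''. Hence $\nu\in\hN$.

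It remains to check that $\UU_\nu=\U$.

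\emph{$\U\subseteq\UU_\nu$.} If $A\in\U$, then $\nu\in\hA$ by the choice of $\nu$, so $A\in\UU_\nu$ by definition.

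\emph{$\UU_\nu\subseteq\U$.} Let $A\in\UU_\nu$, so $\nu\in\hA$, and suppose towards a contradiction that $A\notin\U$. Since $\U$ is an ultrafilter, the complement $A^c\in\U$, and therefore $\nu\in{}^*(A^c)=\hN\setminus\hA$. This contradicts $\nu\in\hA$.

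The only step needing care is the cardinality bound $|\U|\le\mathfrak{c}$, which is exactly what the $\mathfrak{c}$-enlargement hypothesis requires. The rest is routine: for the family $\G=\U$ the finite intersection property is immediate from the ultrafilter axioms, and maximality of $\U$ forces the two ultrafilters to coincide.
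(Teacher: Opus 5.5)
Your proposal is correct and follows essentially the same route as the paper: it uses the finite intersection property of $\U$ and the bound $|\U|\le\mathfrak{c}$ to apply $\mathfrak{c}$-enlargement and pick $\nu\in\bigcap_{A\in\U}\hA$. You then write out the verification of $\UU_\nu=\U$ via complements, which the paper only calls "readily verified."
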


\begin{proof}
Note that every ultrafilter $\U$ is a family of subsets of $\N$ with the finite intersection property.
Since $|\U|\le|\PP(\N)|=\mathfrak{c}$, by $\mathfrak{c}$-\emph{enlargement}
we can pick $\nu\in\bigcap_{A\in\U}\hA$. It is then readily verified that $\UU_\nu=\U$.
\end{proof}

The following property follows easily from the definitions,
and justifies the use of the name ``$u$-equivalence", which stands for ``ultrafilter-equivalence".

\begin{itemize}
\item
\emph{Let $\nu,\mu\in\hN$. Then $\nu\ueq\mu$ if and only if the generated ultrafilters $\UU_\nu=\UU_\mu$.}
\end{itemize}

\begin{remark}
Recall that the \emph{ultrafilter shift} of a set $A\subseteq\N$ by an
ultrafilter $\U$ on $\N$ is defined as
$$A-\U:=\{n\in\N\mid A-n\in\U\}.$$
Note that this is just a remote realm (see Definition \ref{def-remoterealm}).
In fact, for every $\nu\in\hN$, the remote realm $A_\nu=A-\UU_\nu$ 
is the ultrafilter shift of $A$ by the ultrafilter generated by $\nu$.
To see this, observe that $n\in A-\UU_\nu$ if and only if
$A-n\in\UU_\nu$ if and only if $\nu\in{}^*(A-n)$ if and only if
$\nu+n\in\hA$ if and only if $n\in A_\nu$.\footnote
{The notion of ultrafilter shift was introduced by M. Beiglb\"{o}ck in \cite{bei}.}
\end{remark}

\begin{remark}\label{PR-ultrafilters}
The nonstandard characterization of partition regular families given in
Theorem \ref{PRcharacterizations}, corresponds to the following well-know result
about ultrafilters (see for example \cite[Theorem 3.11]{hs}).
\begin{itemize}
\item
\emph{A family $\mathfrak{P}\subseteq\mathcal{P}(\N)$ is partition regular
if and only if there exists an ultrafilter $\mathcal{U}$ on $\N$ that
is a ``PR-witness" for $\mathfrak{P}$, \emph{i.e.}, 
such that for every $A\in\U$ there exists $P\in\mathfrak{P}$ with $P\subseteq A$.}
\end{itemize}
To see this, observe that a number $\nu\in\hN$ has the property
that $P\subseteq A$ for some $P\in\mathfrak{P}$ whenever $\nu\in\hA$,
if and only if the generated ultrafilter $\UU_\nu$ is a PR-witness for $\mathfrak{P}$.
\end{remark}

Recall the following notion.

\begin{definition}
Let $f:\N\to\N$ be a function and let $\U$ be an ultrafilter on $\N$.
The \emph{image ultrafilter} $f(\U)$ is the ultrafilter on $\N$
defined by setting $A\in f(\U)\Leftrightarrow f^{-1}(A)\in\U$.
\end{definition}

More generally, for any function $f:I\to J$ and for any ultrafilter $\U$ on $I$,
one similarly defines the image ultrafilter $f(\U):=\{f^{-1}(A)\mid A\in\U\}$ on $J$.

\begin{remark}
We have seen the following property of $u$-equivalence (it is item (5) in Theorem \ref{u-equivalenceproperties}):
\begin{itemize}
\item
\emph{If $\hf(\nu)\ueq\nu$ then $\hf(\nu)=\nu$ for every $f:\N\to\N$.}
\end{itemize}
This corresponds to the following (nontrivial) fact about ultrafilters:
\begin{itemize}
\item
\emph{If the image ultrafilter $f(\U)=\U$ then $f$ is $\U$-almost everywhere the identity,
\emph{i.e.}, $\{n\in\N\mid f(n)=n\}\in\U$.}
\end{itemize}
To see this, note that $f(\UU_\nu)=\UU_{\hf(\nu)}$ and
that $\{n\in\N\mid f(n)=n\}\in\UU_\nu$ if and only if $\hf(\nu)=\nu$.
\end{remark}

\subsection{Hypernatural numbers and algebra in $\beta\N$}

Recall the following fundamental operation between ultrafilters.

\begin{definition}
Let $\U,\V$ be ultrafilters on $\N$. Their \emph{tensor product} $\U\otimes\V$
is the ultrafilter on $\N\times\N$ defined by setting, for every $X\subseteq\N\times\N$:
$$X\in\U\otimes\V\ \Longleftrightarrow\ 
\{n\in\N\mid X_n\in\V\}\in\U$$
where $X_n:=\{m\in\N\mid (n,m)\in X\}$ is the $n$-fiber of $X$.
\end{definition}

The operation $\otimes$ is associative (assuming the usual
identification of the Cartesian products $\N\times(\N\times\N)$ and $(\N\times\N)\times\N$).
However, we remark that $\otimes$ is not commutative.

Tensor products correspond to the use of different levels of infinity in a nonstandard setting. Precisely:

\begin{proposition}
Let $\nu,\mu\in\hN$. Then the tensor product
$\UU_\nu\otimes\UU_\mu=\UU_{(\nu,{}^*\mu)}$ is the
ultrafilter generated by the pair $(\nu,{}^*\mu)$.
\end{proposition}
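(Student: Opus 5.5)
We need to show that for every $X\subseteq\N\times\N$ one has $X\in\UU_\nu\otimes\UU_\mu$ if and only if $(\nu,{}^*\mu)\in{}^{**}X$. Here the ultrafilter generated by a pair $(\xi,\eta)\in{}^{**}\N\times{}^{**}\N$ is understood, as for single points, as $\{X\subseteq\N\times\N\mid (\xi,\eta)\in{}^{**}X\}$. Since both sides are ultrafilters on $\N\times\N$, it would even suffice to prove one inclusion. The direct chain of equivalences is just as short, so the plan is to compute membership directly, in the same way as in the proof of Theorem~\ref{RamseyNS}.

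Fix $X\subseteq\N\times\N$ and consider the set of ``good first coordinates''
$$\Lambda:=\{n\in\N\mid X_n\in\UU_\mu\}=\{n\in\N\mid \mu\in{}^*(X_n)\}=\{n\in\N\mid (n,\mu)\in\hX\}.$$
The last equality holds because ${}^*(X_n)=\{y\in\hN\mid (n,y)\in\hX\}$, which follows by \emph{transfer} from the defining formula of the fiber, using ${}^*n=n$. By the definition of tensor product, $X\in\UU_\nu\otimes\UU_\mu$ if and only if $\Lambda\in\UU_\nu$, that is, if and only if $\nu\in{}^*\Lambda$.

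The key step is to compute ${}^*\Lambda$. The set $\Lambda$ is defined by the elementary formula $(x,\mu)\in\hX$, with parameters $\mu$ and $\hX$. By the coherence proposition (hyper-extension of a set defined by a formula), transferring the parameters gives
$${}^*\Lambda=\{x\in\hN\mid (x,{}^*\mu)\in{}^{**}X\}.$$
Therefore $\nu\in{}^*\Lambda$ if and only if $(\nu,{}^*\mu)\in{}^{**}X$, which is exactly the claim.

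The only delicate point is this application of \emph{transfer} with a nonstandard parameter. The parameter $\mu$ becomes ${}^*\mu$ and $\hX$ becomes ${}^{**}X$, and this is legitimate because the star map is applied to the whole universe, including $\hN$, as in the iterated hyper-extension framework. It is the same step already used in the proof of Theorem~\ref{RamseyNS}, so I expect no real obstacle. The remaining check is that $\N\times\N$ fibers behave correctly, which is routine.
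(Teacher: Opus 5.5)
Your proof is correct and is essentially the paper's argument: the same chain of equivalences, $X\in\UU_\nu\otimes\UU_\mu \Leftrightarrow \{n\in\N\mid (n,\mu)\in\hX\}\in\UU_\nu \Leftrightarrow \nu\in{}^*\{n\in\N\mid (n,\mu)\in\hX\} \Leftrightarrow (\nu,{}^*\mu)\in{}^{**}X$. Your explicit note that transfer sends the parameters $\mu$ and $\hX$ to ${}^*\mu$ and ${}^{**}X$ spells out the step the paper leaves implicit in its final equivalence.
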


\begin{proof}
For each $X\subseteq\N\times\N$, one has the following chain of equivalences:
\begin{multline*}
X\in\UU_\nu\otimes\UU_\mu\Leftrightarrow
\{n\in\N\mid X_n\in\UU_\mu\}\in\UU_\nu\Leftrightarrow
\{n\in\N\mid \mu\in{}^*(X_n)\}\in\UU_\nu\Leftrightarrow
\\
\Leftrightarrow\{n\in\N\mid (n,\mu)\in\hX\}\in\UU_\nu\Leftrightarrow
\nu\in{}^*\{n\in\N\mid (n,\mu)\in\hX\}\Leftrightarrow
(\nu,{}^*\mu)\in{}^{**}X.
\end{multline*}
\end{proof}

The following sum of ultrafilters on $\N$ revealed a fundamental tool for applications
in Ramsey theory.

\begin{definition}
Let $\U,\V$ be ultrafilters on $\N$. Their \emph{sum} $\U\oplus\V$
is the ultrafilter on $\N$ defined by setting, for every $A\subseteq\N$:
$$A\in\U\oplus\V\ \Longleftrightarrow\ 
\{n\in\N\mid A-n\in\V\}\in\U$$
where $A-n:=\{m\in\N\mid n+m\in A\}$ is the leftward shift of $A$ by $n$.
\end{definition}

It can be directly verified that $\U\oplus\V=\text{Sum}(\U\otimes\V)$ is
the image ultrafilter of the tensor product under the sum function $\text{Sum}:\N\times\N\to\N$
where $(n,m)\mapsto n+m$. Consequently, the sums are associative, \emph{i.e.},
$\U\oplus(\V\oplus\W)=(\U\oplus\V)\oplus\W$ for all ultrafilters $\U,\V,\W$ on $\N$;
however, in general $\U\oplus\V\ne\V\oplus\U$.

Furthermore, sums of ultrafilters correspond to sums of hypernatural numbers of different levels of infinity,
thus allowing to embed algebra of ultrafilters in our nonstandard setting.

\begin{proposition}
Let $\nu,\mu\in\hN$. Then the sum of ultrafilters
$\UU_\nu\oplus\UU_\mu=\UU_{\nu+{}^*\mu}$ is the
ultrafilter generated by the number $\nu+{}^*\mu\in{}^{**}\N$.
\end{proposition}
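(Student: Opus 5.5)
The plan is to deduce the statement from the preceding Proposition on tensor products together with the description of $\oplus$ as an image ultrafilter. Concretely, I use $\U\oplus\V=\text{Sum}(\U\otimes\V)$, where $\text{Sum}(n,m)=n+m$, combined with $\UU_\nu\otimes\UU_\mu=\UU_{(\nu,{}^*\mu)}$. The missing ingredient is that image ultrafilters correspond to applying the double hyper-extension of the map to the generating point. For a pair point $(\xi,\eta)\in{}^{**}\N\times{}^{**}\N$, this means
$$\text{Sum}(\UU_{(\xi,\eta)})=\UU_{{}^{**}\text{Sum}(\xi,\eta)}.$$

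To verify this, fix $A\subseteq\N$ and write $S:=\text{Sum}^{-1}(A)=\{(n,m)\mid n+m\in A\}$. By definition, $A\in\text{Sum}(\UU_{(\nu,{}^*\mu)})$ if and only if $S\in\UU_{(\nu,{}^*\mu)}$, that is, if and only if $(\nu,{}^*\mu)\in{}^{**}S$. Applying the coherence proposition (transfer of set-builder definitions) twice gives
$${}^{**}S=\{(x,y)\in{}^{**}\N^2\mid x+y\in{}^{**}A\}.$$
Here one uses that ${}^{**}$ of addition is the addition of ${}^{**}\N$ extending that of $\hN$, which holds by transfer. Hence the condition is exactly $\nu+{}^*\mu\in{}^{**}A$, which is the meaning of $A\in\UU_{\nu+{}^*\mu}$ for a point of ${}^{**}\N$.

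Alternatively, one can skip the tensor product and argue directly, mirroring the proof of the tensor Proposition:
\begin{multline*}
A\in\UU_\nu\oplus\UU_\mu\Leftrightarrow\{n\mid A-n\in\UU_\mu\}\in\UU_\nu\Leftrightarrow\{n\mid n+\mu\in\hA\}\in\UU_\nu\\
\Leftrightarrow\nu\in{}^*\{n\in\N\mid n+\mu\in\hA\}\Leftrightarrow\nu+{}^*\mu\in{}^{**}A.
\end{multline*}
The second step uses ${}^*(A-n)=\hA-n$. The last step transfers the defining formula of $\Lambda=\{n\in\N\mid n+\mu\in\hA\}$, exactly as in the proof of the earlier theorem on sums at different levels.

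The only delicate point is this final transfer. Since $\mu$ appears as a parameter, it becomes ${}^*\mu$, and $\hA$ becomes ${}^{**}A$. This is the same move already used in the proofs of the tensor-product Proposition and of the theorem on $\alpha_0+{}^*\alpha_1$, so I would cite it in that form. I expect no real obstacle beyond stating clearly that membership in $\UU_\xi$ for $\xi\in{}^{**}\N$ is read via ${}^{**}A$.
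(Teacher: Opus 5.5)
Your proof is correct, and your main route is the one the paper takes: it writes $\UU_\nu\oplus\UU_\mu=\text{Sum}(\UU_\nu\otimes\UU_\mu)=\text{Sum}(\UU_{(\nu,{}^*\mu)})=\UU_{\nu+{}^*\mu}$, and you additionally verify the image-ultrafilter step (membership for points of ${}^{**}\N$ read via ${}^{**}A$), which the paper leaves implicit. Your direct alternative, which transfers the definition of $\{n\in\N\mid n+\mu\in\hA\}$, is also valid and is the same move the paper uses in its proof for $\alpha_0+{}^*\alpha_1$.
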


\begin{proof}
$\UU_\nu\oplus\UU_\mu=\text{Sum}(\UU_\nu\otimes\UU_\mu)=
\textrm{Sum}(\UU_{(\nu,{}^*\mu)})=\UU_{{}^*{Sum}(\nu,{}^*\mu)}=\UU_{\nu+{}^*\mu}$.
\end{proof}

The previous proposition builds a bridge between nonstandard proofs
that use different levels of infinity, and ``standard" proofs that involve sums of ultrafilters.

\begin{remark}\label{ultraexponential}
In the previous section we proved the following result:

\smallskip
\noindent
\textbf{Theorem \ref{exponential}}.
\emph{In every finite coloring of  $\N$ there exists a monochromatic
exponential triple $x, y, y^x$.}

\smallskip
The (short) nonstandard proof that we provided can be translated into the language of ultrafilters
as follows.
\begin{proof}[Ultrafilter proof]
For every $A\subseteq\N$ and for every $\ell\in\N$, consider the following set of ordered pairs:
$$\Lambda(A,\ell):=\{(a,d)\in\N^2\mid \text{either}\ a, d, a\cdot d, \ldots, a\cdot d^\ell\in A\ \text{or}\ 
a, d, a\cdot d, \ldots, a\cdot d^\ell\in A^c\}.$$

We observe that the family $\mathcal{F}:=\{\Lambda(A,\ell)\mid A\subseteq\N,\ \ell\in\N\}$ has the
finite intersection property. Indeed, given $A_1,\ldots,A_n\subseteq\N$ and $\ell_1,\ldots,\ell_n\in\N$,
let $\N=C_1\cup\ldots\cup C_r$ be the finite partition generated by $A_1,\ldots,A_n$,
and let $\ell=\max\{\ell_i\mid i=1,\ldots,n\}$.
By multiplicative Brauer's Theorem there exists a monochromatic pattern
$$b, c, b\cdot c, \ldots, b\cdot c^\ell\subseteq C_i.$$
Then clearly $(b,c)\in\bigcap_{j=1}^n\Lambda(A_j,\ell_j)$.
Now pick an ultrafilter $\W$ on $\N^2$ that extends the family $\F$.
Since the sets $\Lambda(A,\ell)\subseteq (A\times A)\cup (A^c\times A^c)$, it follows 
that the canonical projections $\pi_1(\W)=\pi_2(\W)=\U$ are equal.
Note also that $\U$ is a ``witness" of multiplicative Brauer's Theorem, \emph{i.e.}:
\begin{itemize}
\item
For every $B\in\U$ and for every $\ell\in\N$ there exists
a pattern $a, d, a\cdot d, \ldots, a\cdot d^\ell\in\B$.
\end{itemize}

Let $\V:=\text{Exp}(\U\otimes\U)$ be the image ultrafilter of
the tensor power $\U\otimes\U$ under the map $\text{Exp}:(x,y)\mapsto y^x$.
We claim that $\V$ is a ``witness" of the partition regularity of exponential triples, \emph{i.e.},
for every $C\in\V$ there exists a pattern $x, y, y^x\in C$.
This will yield the desired partition regularity result.

By definition, $C\in\V$ if and only if $\text{Exp}^{-1}(C)\in\U\otimes\U$, and this means that
$\widehat{C}:=\{n\in\N\mid C_n\in\U\}\in\U$, where $C(n):=\{m\in\N\mid m^n\in C\}$.
Pick $a\in \widehat{C}$. Since $C(a)\cap\widehat{C}\in\U$
there exist elements $b, c, b\cdot c^a\in C(a)\cap\widehat{C}$.
Then pick an element $d\in C(b)\cap C(b\cdot c^a)\in\U$, and let
$x:=c^a$ and $y:=d^b$. Note that $x\in C$ since $c\in C(a)$; similarly,
$y\in C$ since $d\in C(b)$. Finally, $y^x=d^{b\cdot c^a}\in C$,
since $d\in C(b\cdot c^a)$.
\end{proof}
\end{remark}

Similarly to sums $\U\oplus\V$, it is possible to define products
$\U\odot\V$ between ultrafilters on $\N$. More generally,
it is also possible to define an associative operation $\ostar$ in the space $\beta S$ of ultrafilters on
any semigroup $(S,\star)$, by letting
for all $\U,\V\in\beta S$ and for every $A\subseteq S$:
$$A\in\U\ostar\V\ \Longleftrightarrow\ \{s\in S\mid \{t\in S\mid s\star t\in A\}\in\V\}\in\U.$$
The resulting algebras on the space of ultrafilters have been widely studied, 
finding numerous applications in Ramsey theory (see \cite{hs} and the references therein).

An important property of algebra in the space of ultrafilters
is the existence of idempotent elements, which have proven crucial in applications.

\begin{theorem}[Ellis' Lemma]
Let $(S,\star)$ be a semigroup. Then there exist idempotent elements $\U\ostar\U=\U$
in the corresponding semigroup $(\beta S,\ostar)$ on the space of ultrafilters.
\end{theorem}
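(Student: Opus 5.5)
We prove Ellis' Lemma by the standard route: topology on $\beta S$, Zorn's Lemma, and a minimality argument.

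\emph{Topology.} We equip $\beta S$ with the Stone topology, whose basic (cl)open sets are $\widehat A:=\{\U\in\beta S\mid A\in\U\}$ for $A\subseteq S$. This space is compact Hausdorff. Compactness follows because a family of basic closed sets with the finite intersection property yields a family of subsets of $S$ with the finite intersection property, which extends to an ultrafilter. In nonstandard terms, this is just the enlarging property together with the fact that every ultrafilter is some $\UU_\nu$; so $\beta S$ is the Kolmogorov quotient of $({}^*S,\tau_s)$, as observed in Remark \ref{hNtopologicalspace}.

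\emph{Right continuity.} For fixed $\V$, we show that the map $\rho_\V:\U\mapsto\U\ostar\V$ is continuous. Indeed,
$$\rho_\V^{-1}(\widehat A)=\widehat{B},\qquad B:=\{s\in S\mid\{t\mid s\star t\in A\}\in\V\},$$
by the very definition of $\ostar$. Together with associativity, which holds as in the case of $\oplus$ via tensor products, this makes $(\beta S,\ostar)$ a compact Hausdorff right topological semigroup.

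\emph{Minimal closed subsemigroup.} Consider the family of nonempty closed subsemigroups of $\beta S$, ordered by reverse inclusion. It is nonempty, since $\beta S$ itself belongs to it. The intersection of a chain is nonempty by compactness, and it is again a closed subsemigroup. By Zorn's Lemma there is a minimal element $T$. Pick $\U\in T$. Then $T\ostar\U=\rho_\U(T)$ is compact (as a continuous image of a compact set), hence closed, nonempty, and a subsemigroup contained in $T$, because
$$(x\ostar\U)\ostar(y\ostar\U)=(x\ostar\U\ostar y)\ostar\U.$$
By minimality, $T\ostar\U=T$.

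\emph{Idempotent.} Let $R:=\{\V\in T\mid\V\ostar\U=\U\}$. This set is nonempty because $\U\in T=T\ostar\U$. It is closed, being $\rho_\U^{-1}(\{\U\})\cap T$, which uses that points are closed in a Hausdorff space. It is a subsemigroup: if $\V,\W\in R$ then
$$(\V\ostar\W)\ostar\U=\V\ostar\U=\U.$$
By minimality $R=T\ni\U$, so $\U\ostar\U=\U$.

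The main obstacle is the bookkeeping that makes the argument legitimate: verifying associativity of $\ostar$ on $\beta S$ and continuity of the correct one-sided translation, since the argument fails with the wrong side. Everything else is the formal Zorn-plus-minimality argument.
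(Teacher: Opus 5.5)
The paper gives no proof of Ellis' Lemma. It quotes it as a standard fact about $(\beta S,\ostar)$ (see \cite{hs}) and uses it only as a black box, e.g.\ to obtain idempotent numbers $\gamma$ with $\gamma+{}^*\gamma\ueq\gamma$.

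Your argument is the standard Ellis--Numakura proof, and it is correct. You chose the right side: with the paper's definition of $\ostar$, the continuous map is $\rho_\V:\U\mapsto\U\ostar\V$, and your computation $\rho_\V^{-1}(\widehat A)=\widehat B$ shows this. Both applications of minimality (to $T\ostar\U$ and to $R$) use only three things:
\begin{itemize}
\item continuity of $\rho_\U$;
\item associativity of $\ostar$;
\item the fact that compact subsets and singletons of the Hausdorff space $\beta S$ are closed.
\end{itemize}
What your proof adds is a self-contained justification, in line with the paper's stated aim of assuming no prior knowledge.

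The one ingredient you only assert is associativity of $\ostar$, which the paper also only asserts. It follows by writing $\U\ostar\V$ as the image of $\U\otimes\V$ under $(s,t)\mapsto s\star t$, and then using associativity of $\otimes$ together with the identities $(f\times\mathrm{id})(\U\otimes\V)=f(\U)\otimes\V$ and $(\mathrm{id}\times g)(\U\otimes\V)=\U\otimes g(\V)$.

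A minor caveat concerns your nonstandard aside, which the proof does not use. With only the paper's $\mathfrak{c}$-enlargement, every ultrafilter on $S$ is guaranteed to be of the form $\{A\subseteq S\mid\nu\in{}^*A\}$ only when $|\PP(S)|\le\mathfrak{c}$; for larger $S$ you would need $|\PP(S)|$-enlargement. Also, Remark~\ref{hNtopologicalspace} is stated only for $\hN$. The argument to rely on is your direct compactness proof, which extends a family of subsets of $S$ with the finite intersection property to an ultrafilter.
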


\subsection{Hypernatural numbers and topological dynamics}

We conclude this section with a remark on the connection between 
hypernatural numbers and topological dynamics.
Although this topic is beyond the scope of this article, we feel it is worth mentioning.

\smallskip
There is a natural dynamics on the space $\beta\N$ that has been studied,
namely the one obtained by considering the flow $\U\mapsto 1\oplus\U$.
Such a topological dynamics have been instrumental in several
relevant applications in Ramsey theory.\footnote
{See, \emph{e.g.}, \cite[Chapter 19]{hs} and references therein.
See also \cite{bl} for several nice examples of dynamical proofs
of Ramsey theory results performed in the space $\beta\N$.}
Similarly, one can consider a natural dynamics on the compact regular space
$(\hN,\tau_S)$ of the hypernatural numbers (see Remark \ref{hNtopologicalspace}), 
by considering the shift $\s:\nu\mapsto\nu+1$.
Also in this case, the nonstandard setting gives an alternative
viewpoint that has several interesting aspects.
Probably, the most relevant one is the fact that return time sets
$N(\nu,U):=\{n\in\N\mid \nu+n\in U\}$ and neighborhoods $U$ 
are both sets of (hyper)natural numbers, and this determines
an interplay between these two notions that is highlithed when partition regularity problems
on $\N$ are considered. 

For example, an idempotent point $\nu\in\hN$,
\emph{i.e.}, a point that generates an idempotent ultrafilter $\UU_\nu\oplus\UU_\nu=\UU_\nu$,
can be seen as a sort of ``self-recurrent" point, \emph{i.e.}, a recurrent point
of the dynamical system $(\hN,\s)$ with the special property
that the return time set into any of its neighborhoods 
must contain a point that belongs itself to the neighborhood. In other words,
$\nu\in\hN$ is idempotent if and only if for every neighborhood $U$ of $\nu$,
one has $N(\nu,U)\cap U\ne\emptyset$.
A nonstandard treatment of topological dynamics on the hypernatural numbers
is outside the scope of this paper; however,
the interested reader is refereed to \cite{isaac}, focused on a nonstandard take of central sets,
and to \cite{pb,dp} for more comprehensive treatments,
where also the nonstandard version of the Bernoulli dynamics is studied.

\section{Idempotent numbers}\label{sec-idempotents}

We now introduce a special class of hypernatural numbers that are instrumental
in applications.

\begin{definition}
Let $\gamma\in\hN$. We say that $\gamma$ is \emph{additively idempotent}
(or simply \emph{idempotent}) if $\gamma+{}^*\gamma\,\ueq\,\gamma$.
Similarly, we say that $\gamma$ is \emph{multiplicatively idempotent}
if $\gamma\cdot{}^*\gamma\,\ueq\,\gamma$.
\end{definition}

\begin{remark}
Note that $\gamma\in\hN$ is idempotent if and only if the 
generated ultrafilter $\UU_\gamma=\UU_\gamma\oplus\UU_\gamma$ is idempotent.
Indeed, recall that $\UU_\gamma\oplus\UU_\gamma=\UU_{\gamma+{}^*\gamma}$.
Thus the existence of idempotent numbers is a consequence of Ellis' Lemma.
\end{remark}

\begin{proposition}
If $\gamma\in\hN$ is additively idempotent, then $\gamma$ is a multiple of every finite $n\in\N$.
\end{proposition}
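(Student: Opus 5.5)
Fix $n\in\N$. The plan is to apply Theorem \ref{u-equivalenceproperties} to the residue function $r_n:\N\to\{0,1,\ldots,n-1\}$, where $r_n(x)$ is the remainder of $x$ modulo $n$. Since $r_n$ has finite range, item (4) of that theorem says that $u$-equivalent points have \emph{equal} ${}^*r_n$-values. So from the idempotency relation $\gamma+{}^*\gamma\ueq\gamma$ we get ${}^*r_n(\gamma+{}^*\gamma)=r_n(\gamma)$, with ${}^*r_n$ here the extension to ${}^{**}\N$. Equivalently, for each residue class $R_j:=\{x\in\N\mid x\equiv j \bmod n\}$ we have $\gamma\in{}^*R_j$ if and only if $\gamma+{}^*\gamma\in{}^{**}R_j$. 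Let $j$ be the unique residue with $\gamma\in{}^*R_j$.

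Next I compute the residue of $\gamma+{}^*\gamma$ directly. By \emph{transfer} applied to ``$\gamma\in{}^*R_j$'' we get ${}^*\gamma\in{}^{**}R_j$. The statement ``$\forall x,y\ (x\in R_j\land y\in R_j)\to x+y\in R_{2j \bmod n}$'' is elementary, so it transfers to ${}^{**}\N$. Also $\gamma\in{}^*R_j\subseteq{}^{**}R_j$, because ${}^{**}R_j\cap\hN={}^*R_j$ by transfer of $R_j={}^*R_j\cap\N$. Together these give $\gamma+{}^*\gamma\in{}^{**}R_{2j\bmod n}$.

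Comparing with the first paragraph yields $2j\equiv j\pmod n$, hence $j\equiv 0$. So $\gamma\in{}^*(n\N)$, that is, $\gamma$ is a ${}^*$multiple of $n$; by transfer this means $\gamma=n\mu$ for some $\mu\in\hN$.

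The only delicate point is keeping track of hyper-extension levels. One must check that $\gamma\in{}^{**}R_j$ holds, not only $\gamma\in{}^*R_j$, and that $u$-equivalence across levels is read as $\gamma\in{}^*A\Leftrightarrow\gamma+{}^*\gamma\in{}^{**}A$. Both are routine consequences of transfer, as in the proof that $\alpha\ueq{}^*\alpha$. Alternatively, one can bypass item (4) and argue directly with the finite partition $\N=R_0\cup\ldots\cup R_{n-1}$, which gives the same conclusion.
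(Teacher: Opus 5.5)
Your proof is correct and follows the same route as the paper. The paper also takes the residue $i$ of $\gamma$ modulo $n$ and notes that ${}^*\gamma$ has the same residue (via $\gamma\ueq{}^*\gamma$, which is exactly your transfer step); it then uses $\gamma+{}^*\gamma\ueq\gamma$ to get $2i\equiv i\pmod n$. Your extra bookkeeping ($\gamma\in{}^{**}R_j$, and cross-level $u$-equivalence read as $\gamma\in{}^*A\Leftrightarrow\gamma+{}^*\gamma\in{}^{**}A$) makes explicit what the paper's three-line argument leaves implicit.
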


\begin{proof}
Recall that $\gamma\ueq{}^*\gamma$; so, if $\gamma\equiv i\!\mod n$ then
also ${}^*\gamma\equiv i\!\mod n$. Then, since $\gamma+{}^*\gamma\ueq\gamma$,
we have $2i\equiv i\!\mod n$, and hence $i\equiv 0\!\mod n$.
\end{proof}

Note that if $\gamma$ is idempotent, then also 
$\gamma+{}^*\gamma+{}^{**}\gamma\,\ueq\,\gamma$,
and $\gamma+{}^*\gamma+{}^{**}\gamma+{}^{***}\gamma\,\ueq\,\gamma$,
and so forth.

Historically, idempotent ultrafilters were isolated to provide a short proof 
of Hindman's Theorem \cite{hi0}, a cornerstone of arithmetic Ramsey theory.
Below we provide a ``nonstandard" proof.

\begin{theorem}[Hindman]\label{hindman}
Let $\N=C_1\cup\ldots\cup C_r$ be a finite coloring.
Then there exists an increasing sequence $(a_i\mid i\in\N)$ such that the
corresponding set of finite sums is monochromatic, \emph{i.e.}, there exists a color $C_j$ such that
$$\text{FS}(a_i)_{i=1}^\infty:=\{a_{i_1}+\ldots+a_{i_k}\mid i_1<\ldots<i_k\}\subseteq C_j.$$
\end{theorem}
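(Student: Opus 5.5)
We will prove Hindman's Theorem by combining an idempotent number with the Ramsey-type construction used in Theorem \ref{RamseyNS}. By Ellis' Lemma and the Remark that follows the definition of idempotents, we fix an additively idempotent $\gamma\in\hN$, so that $\gamma+{}^*\gamma\ueq\gamma$. Since the $u$-equivalence class of $\gamma$ lies in exactly one ${}^*C_j$, we fix the color $C_j$ with $\gamma\in{}^*C_j$ and set $A:=C_j$. We will build $a_1<a_2<\ldots$ inductively and maintain the invariant that every finite sum $s\in\text{FS}(a_1,\ldots,a_n)$ satisfies $s\in A$ and $\gamma\in{}^*(A-s)$, i.e.\ $s+\gamma\in\hA$. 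We also require $\gamma\in\hA$, which makes the empty sum $s=0$ satisfy the second condition at stage $0$.

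The key tool is the following observation: if $B\subseteq\N$ and $\gamma\in\hB$, then $\gamma\in{}^*B^\star$, where $B^\star:=\{x\in B\mid \gamma\in{}^*(B-x)\}$. To prove it, note that $\gamma+{}^*\gamma\ueq\gamma\in\hB$ gives $\gamma+{}^*\gamma\in{}^{**}B$. This means that $\gamma\in{}^*\Lambda$ with $\Lambda:=\{x\in\N\mid x+\gamma\in\hB\}=\{x\mid\gamma\in{}^*(B-x)\}$, exactly as in the proof of the theorem on sums of different levels of infinity. Hence $\gamma\in\hB\cap{}^*\Lambda={}^*B^\star$.

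For the inductive step, suppose $a_1<\ldots<a_n$ have been chosen and let $F_n:=\text{FS}(a_1,\ldots,a_n)\cup\{0\}$. Put $B:=\bigcap_{s\in F_n}(A-s)$, a finite intersection. By the invariant, $\gamma\in{}^*(A-s)$ for each $s\in F_n$, so $\gamma\in\hB$. The key tool then gives $\gamma\in{}^*B^\star$, so $B^\star$ is nonempty. Since $\gamma$ is infinite, $B^\star$ is in fact infinite, by the ``first level of infinity'' property. We may therefore choose $a_{n+1}\in B^\star$ with $a_{n+1}>a_n$.

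It remains to check the invariant for the new sums $s+a_{n+1}$ with $s\in F_n$. First, $a_{n+1}\in B\subseteq A-s$, so $s+a_{n+1}\in A$. Second, $\gamma\in{}^*(B-a_{n+1})\subseteq{}^*(A-s-a_{n+1})$, so $\gamma+s+a_{n+1}\in\hA$. Old sums keep the invariant automatically. Every finite sum of the sequence appears at some stage, so $\text{FS}(a_i)_{i=1}^\infty\subseteq C_j$.

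The only genuinely delicate step is the key tool, which is where idempotency is used. It relies on correctly unpacking $\gamma+{}^*\gamma\in{}^{**}B$ via transfer into $\gamma\in{}^*\Lambda$. The rest is bookkeeping. We must be careful to use $0\in F_n$ so that the new single element $a_{n+1}$ itself lies in $A$.
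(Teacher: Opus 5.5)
Your proof is correct and is essentially the paper's argument: fix an idempotent $\gamma$ with $\gamma\in{}^*C_j$, maintain $\text{FS}(a_i)_{i=1}^n\subseteq C_j$ and $\text{FS}(a_i)_{i=1}^n+\gamma\subseteq{}^*C_j$, and use $\gamma\ueq\gamma+{}^*\gamma$ with backward transfer to produce $a_{n+1}>a_n$; your set $B^\star$ (a nonstandard form of the Galvin--Glazer $A^\star$ trick) just packages that backward-transfer step. Including the empty sum $0$ in $F_n$ is a real gain in care, since it is what guarantees $a_{n+1}\in C_j$ and $a_{n+1}+\gamma\in{}^*C_j$, which the paper's write-up glosses over when it asserts $\text{FS}(a_i)_{i=1}^n+a_{n+1}=\text{FS}(a_i)_{i=1}^{n+1}$.
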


\begin{proof}
Pick an idempotent $\gamma\in\hN$, and let $C_j$ be the color such that $\gamma\in{}^*C_j$.
We recursively define an increasing sequence $(a_n\mid n\in\N)$ such that for every $n$:
\begin{itemize}
\item
$\text{FS}(a_i)_{i=1}^n:=\{a_{i_1}+\ldots+a_{i_k}\mid i_1<\ldots<i_k\le n\}\subseteq C_j$.
\item
$\text{FS}(a_i)_{i=1}^n+\gamma\subseteq{}^*C_j$.
\end{itemize}

Since $\gamma\,\ueq\,\gamma+{}^*\gamma$, 
from $\gamma\in{}^*C_j$ it follows that $\gamma+{}^*\gamma\in{}^{**}C_j$.
By \emph{backward transfer} we obtain the existence of an element $a_1\in C_j$
such that $a_1\in C_j$ and $a_1+\gamma\in{}^*C_j$.
This completes the base case $n=1$, since trivially $\text{FS}(a_i)_{i=1}^1=\{a_1\}$.

At the inductive step, by the hypothesis we know that the infinite $\gamma\in\hN$ satisfies
the following property: ``For every $x\in\text{FS}(a_i)_{i=1}^n$ it is
$x+\gamma\in{}^*C_j$, and hence also $x+\gamma+{}^*\gamma\in{}^{**}C_j$."
Note that $\gamma$ is infinite and hence $\gamma>a_n$. So, by \emph{backward transfer},
we obtain the existence of a natural number $a_{n+1}>a_n$ such that
$x+a_{n+1}\in C_j$ and $x+a_{n+1}+\gamma\in {}^*C_j$ for every $x\in\text{FS}(a_i)_{i=1}^n$.
This means that $\text{FS}(a_i)_{i=1}^n+a_{n+1}\subseteq C_j$ and $\text{FS}(a_i)_{i=1}^n+a_{n+1}+\gamma\in{}^*C_j$.
Finally, note that $\text{FS}(a_i)_{i=1}^n+a_{n+1}=\text{FS}(a_i)_{i=1}^{n+1}$.
\end{proof}

\smallskip
A recent result whose proof was obtained by using idempotent numbers is the following:\footnote
{See \cite{dlmrv}, where an infinitary version of this result is proved.}

\begin{theorem}\label{sumsquotients}
In every finite coloring of  $\N$ there exists a monochromatic
pattern of the form $x, y, x+y, \frac{y}{x}$.
\end{theorem}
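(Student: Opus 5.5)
Write $y=xz$. The pattern $x, y, x+y, \frac{y}{x}$ then becomes $x, z, xz, x(1+z)$. By Theorem \ref{PRcharacterizations}, extended to iterated hyper-extensions, it suffices to find hypernatural numbers, possibly at different levels of infinity, such that
$$x\ \ueq\ z\ \ueq\ x\cdot z\ \ueq\ x+x\cdot z .$$
Backward \emph{transfer} then yields the standard monochromatic pattern. The natural candidate is $x=\gamma$ and $z={}^*\gamma$ for a suitable $\gamma\in\hN$. Since $\gamma\ueq{}^*\gamma$, the first equivalence is automatic.

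The remaining conditions are $\gamma\cdot{}^*\gamma\ueq\gamma$, which says that $\gamma$ is multiplicatively idempotent, and
$$\gamma+\gamma\cdot{}^*\gamma=\gamma\cdot(1+{}^*\gamma)\ \ueq\ \gamma .$$
In ultrafilter language, with $\U=\UU_\gamma$, these read $\U\odot\U=\U$ and $\U\odot(1\oplus\U)=\U$. Here $1\oplus\U=\UU_{1+\gamma}$, and ${}^*$ of it is $1+{}^*\gamma$.

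The second condition is the real content, so the plan is an Ellis-type argument. Let $T$ be the set of ultrafilters $\V$ such that $\V\odot(1\oplus\W)=\V$ for some suitable fixed $\W$. Better, one should look for a closed subsemigroup $S$ of $(\beta\N,\odot)$ on which the map $\V\mapsto\V\odot(1\oplus\V)$ can be controlled. Concretely, I would first try the following.

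\begin{enumerate}
\item Take an additive idempotent $\eta$, which exists by Ellis' Lemma. By the Proposition on additively idempotent numbers, $\eta$ is divisible by every $n\in\N$.
\item Consider the family $\mathcal{L}$ of ultrafilters $\V$ with $\V\odot\UU_{1+\eta'}=\V$, where $\eta'$ ranges over such idempotents. Show that $\mathcal{L}$ is a nonempty closed subsemigroup of $(\beta\N,\odot)$. Nonemptiness should use that $\{1+\eta'\}$ generates a multiplicative structure, via van der Waerden/Brauer-type witnesses as in Theorem \ref{exponential}.
\item Apply Ellis' Lemma inside the compact right-topological semigroup $(\mathcal{L},\odot)$ to obtain a multiplicative idempotent $\gamma$ in $\mathcal{L}$.
\item Arrange that the $\eta'$ appearing in step 2 can be taken $u$-equivalent to $\gamma$ itself. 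This is the self-referential requirement $1\oplus\U$ with the same $\U$.
\end{enumerate}

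The main obstacle is step 4: forcing the multiplicative idempotent to coincide with the ultrafilter appearing inside $1+{}^*\gamma$. If this self-referential choice fails, the fallback is to decouple the roles. Take $x=\alpha$ and $z={}^*\gamma$, with $\gamma$ an additive idempotent, and use different levels $\alpha$, ${}^*\gamma$, ${}^{**}\gamma$ so that the preservation of $\ueq$ under functions of different levels can be invoked. One would then demand $\alpha\ueq\alpha\cdot{}^*\gamma\ueq\alpha+\alpha\cdot{}^*\gamma$ together with $\alpha\ueq\gamma$.

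Such an $\alpha$ would be sought as a multiplicative idempotent in the closure of the additive idempotents. This closure is a multiplicative left ideal, so an idempotent in the smallest ideal there absorbs the factor $(1+{}^*\gamma)$ on the right. That absorption is the step I expect to require the most care.
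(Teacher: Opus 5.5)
There is a genuine gap, and neither of the routes you sketch can be completed. Your reduction to finding $x\ueq z\ueq xz\ueq x+xz$ is fine. The trouble starts when you choose $x=\gamma\in\hN$ and $z={}^*\gamma$: the pair $(x,z)$ then generates $\U\otimes\U$, so you are asking for a single ultrafilter with $\U\odot\U=\U=\U\odot(1\oplus\U)$. That is much stronger than the theorem: by Theorem~\ref{RamseyNS} it would even give an increasing sequence with all $a_n$, $a_na_m$, $a_n+a_na_m$ ($n<m$) monochromatic. You never show such a $\U$ exists.

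Worse, both of your concrete plans force $\UU_\gamma$ to be simultaneously an additive and a multiplicative idempotent.
\begin{itemize}
\item In steps 1--4, $\gamma$ is a multiplicative idempotent, and step 4 asks for $\gamma\ueq\eta'$ with $\eta'$ additively idempotent.
\item In the fallback, $\alpha\ueq\gamma$ gives $\alpha\cdot{}^*\gamma\ueq\alpha\cdot{}^*\alpha$ and $\alpha\cdot(1+{}^*\gamma)\ueq\alpha\cdot(1+{}^*\alpha)$, since $u$-equivalence is preserved across levels. So nothing is decoupled: you are back to the same conditions, plus additive idempotence of $\UU_\alpha$.
\end{itemize}
No such number exists. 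Suppose $\gamma\ueq\gamma+{}^*\gamma\ueq\gamma\cdot{}^*\gamma$; such a $\gamma$ is necessarily infinite. Take the coloring of Theorem~\ref{Hindmansumproducts} and the color $C$ with $\gamma\in{}^*C$. Then $(\gamma,{}^*\gamma)\in{}^{**}X$ for $X=\{(n,m)\mid n+m\in C,\ nm\in C\}$. Theorem~\ref{RamseyNS} then gives an increasing sequence with all $a_n+a_m$ and $a_na_m$ in $C$, a contradiction. So step 4 and the final ``absorption'' are not merely delicate; they are impossible.

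The missing idea is to put the additive idempotent in a denominator and let the levels overlap. Take $\gamma$ additively idempotent, and set $\xi={}^*\gamma/\gamma\in{}^{**}\N$ and $\eta={}^{**}\gamma/\gamma\in{}^{***}\N$. These are hypernatural numbers because, by \emph{transfer}, ${}^*\gamma$ is divisible by every element of $\hN$. Then $\eta=\xi\cdot{}^*\xi$, so your ansatz $y=xz$ with $z={}^*x$ is actually realized, with $x=\xi$.

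The point is that $\xi$ and ${}^*\xi$ share the level ${}^*\gamma$. Hence $\xi\cdot{}^*\xi\ueq\xi$ is \emph{not} the identity $\UU_\xi\odot\UU_\xi=\UU_\xi$. It holds simply because $(\gamma,{}^{**}\gamma)$ and $(\gamma,{}^*\gamma)$ lie in the corresponding hyper-extensions of the same subsets of $\N^2$. The remaining relations are just as direct:
\begin{itemize}
\item $\xi+\eta=({}^*\gamma+{}^{**}\gamma)/\gamma\ueq\xi$ follows from $\gamma+{}^*\gamma\ueq\gamma$, since ${}^*\gamma+{}^{**}\gamma={}^*(\gamma+{}^*\gamma)$;
\item $\eta/\xi={}^*\xi\ueq\xi$.
\end{itemize}
This needs only Ellis' Lemma for $(\beta\N,\oplus)$; no multiplicative idempotent enters at all.
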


\begin{proof}
Pick an idempotent number $\gamma\in\hN$, and let
$\xi:=\frac{{}^*\gamma}{\gamma}$ and $\eta:=\frac{{}^{**}\gamma}{\gamma}$.
Since $\gamma$ is a multiple of every $n\in\N$,
by \emph{transfer}, ${}^*\gamma$ is a multiple of every $\nu\in\hN$
and ${}^{**}\gamma$ is a multiple of every $\nu\in{}^{**}\N$. In particular,
$\xi\in{}^{**}\N$ and $\eta\in{}^{***}\N$.
Since $\gamma\ueq\gamma+{}^*\gamma$, we have that
$\xi+\eta=\frac{{}^*\gamma+{}^{**}\gamma}{\gamma}\ueq\frac{{}^*\gamma}{\gamma}=\xi$
and $\frac{\eta}{\xi}=\frac{{}^{**}\gamma}{{}^*\gamma}\ueq\frac{{}^*\gamma}{\gamma}=\xi$.
This completes the proof that $\xi\ueq\eta\ueq\xi+\eta\ueq\frac{\eta}{\xi}$.
\end{proof}

Below is the translation of the above proof into the language of ultrafilters.

\begin{proof}[Ultrafilter proof of Theorem \ref{sumsquotients}]
Pick an idempotent ultrafilter $\U$ in $(\beta\N,\oplus)$.
It is well-known that the set of multiples $n\N:=\{nk\mid k\in\N\}\in\U$ for every $k\in\N$.
As a consequence, the set $\{n\in\N\mid\{m\in\N\mid n\mathrel{|}m\}\in\U\}\in\U$, and hence
$$D:=\{(n,m)\in\N^2\mid n\mathrel{|}m\}\in\U\otimes\U.$$
Now let $\text{Div}:\N^2\to\N$ be the function where $(n,m)\mapsto\frac{m}{n}$ if $(n,m)\in D$
and $(n,m)\mapsto 1$ otherwise, and let
$\V:=\text{Div}(\U\otimes\U)$ be the image ultrafilter of the tensor power $\U\otimes\U$.
Given a coloring $\N=C_1\cup\ldots\cup C_r$, pick the color $C=C_i$ such that $C\in\V$. Then 
$$X:=\left\{(n,m)\in\N^2\ \middle |\  \frac{m}{n}\in C\right\}=D\cap\text{Div}^{-1}(C)\in\U\otimes\U.$$ 

Recall that, by definition, a set $Y\in\U\otimes\U$ if and only if
$\widehat{Y}:=\{n\in\N\mid Y_n\in\U\}\in\U$ where
$Y_n=\{m\in\N\mid (n,m)\in Y\}$.
Recall also that, since $\U$ is idempotent,
a set $B\in\U$ if and only if $B':=\{n\in\N\mid B-n\in\U\}\in\U$.

Since $\widehat{X}:=\{n\in\N\mid X_n\in\U\}\in\U$, we can pick an element $a\in\widehat{X}$.
Then $X_a\in\U$, and hence $(X_a)'\in\U$.
Pick $b\in X_a\cap(X_a)'\cap\widehat{X}\in\U$,
so that $\frac{b}{a}\in C$, $X_a-b\in\U$ and $X_b\in\U$.
Then pick $c\in X_a\cap X_b\cap (X_a-b)\in\U$, so that
$\frac{c}{a},\frac{c}{b},\frac{b+c}{a}\in C$.
If we let $x:=\frac{b}{a}$ and $y:=\frac{c}{a}$, then $x, y, x+y, \frac{y}{x}\in C$, as desired.
\end{proof}

As a curiosity, we obtain the following

\begin{corollary}
In every finite coloring of  $\N$ there exists a monochromatic
pattern of the form $2^a, b, 2^a b, \sqrt[a]{b}$.
\end{corollary}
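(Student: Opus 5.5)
The plan is to derive the corollary from Theorem~\ref{sumsquotients} by pulling a coloring back along the exponential map $n\mapsto 2^n$. Given a finite coloring $\N=C_1\cup\ldots\cup C_r$, define the induced coloring $\N=D_1\cup\ldots\cup D_r$ by $D_i:=\{n\in\N\mid 2^n\in C_i\}$. These sets cover $\N$ and are pairwise disjoint, so this is a genuine finite coloring.

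Apply Theorem~\ref{sumsquotients} to the coloring $\{D_i\}$. It yields a color $D_i$ and elements $x,y\in\N$ with $x,y,x+y,\frac{y}{x}\in D_i$; in particular $\frac{y}{x}\in\N$, so $x$ divides $y$. Write $y=xk$ with $k=\frac{y}{x}\in\N$. By the definition of $D_i$, the four numbers $2^x,\ 2^y,\ 2^{x+y}=2^x\cdot 2^y,\ 2^{y/x}$ all lie in $C_i$.

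Now set $a:=x$ and $b:=2^y$. Then $2^a=2^x\in C_i$, $b=2^y\in C_i$, and $2^a b=2^{x+y}\in C_i$. Moreover $\sqrt[a]{b}=(2^{xk})^{1/x}=2^k=2^{y/x}\in C_i$. This gives the monochromatic pattern $2^a, b, 2^a b, \sqrt[a]{b}$.

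The only point needing care is that $\sqrt[a]{b}$ is a natural number. This holds because Theorem~\ref{sumsquotients} guarantees that $\frac{y}{x}$ is a natural number (it is an element of the monochromatic set). There is no real obstacle; alternatively, one could run the argument directly in $\hN$ with $\xi,\eta$ from the proof of Theorem~\ref{sumsquotients}, using item (3) of Theorem~\ref{u-equivalenceproperties} with $f(n)=2^n$ to get $2^\xi\ueq2^\eta\ueq2^{\xi+\eta}\ueq2^{\eta/\xi}$.
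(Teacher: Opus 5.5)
Your proof is correct and is essentially the paper's argument. The paper works directly with $u$-equivalent numbers $\xi,\eta,\xi+\eta,\frac{\eta}{\xi}$ coming from Theorem~\ref{sumsquotients}, and sets $\alpha:=\xi$, $\beta:=2^\eta$. That is exactly the nonstandard counterpart of your pullback coloring with $a:=x$, $b:=2^y$, which is the alternative you mention at the end.
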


\begin{proof}
By the previous theorem there exist hypernatural numbers $\xi\ueq\eta\ueq\xi+\eta\ueq\frac{\eta}{\xi}$.
Then if we let $\alpha:=\xi$ and $\beta:=2^\eta$, also the following four hypernatural numbers
are $u$-equivalent to each other:
$$2^\alpha=2^\xi,\quad \beta=2^\eta,\quad 2^\alpha\beta=2^{\xi+\eta},\quad 
\sqrt[\alpha]{\beta}=2^{\frac{\eta}{\xi}}.$$
\end{proof}

Combining monochromatic sums and products is currently one of the hottest areas of
research in arithmetic Ramsey theory. In fact, even the following ``simple"
problem is open to this day:

\begin{itemize}
\item
\emph{Is it true that in every finite coloring of the natural numbers one
finds a monochromatic pattern of the form $x, y, x+y, x\cdot y$?}
\end{itemize}

\subsection{Monochromatic products of sums}

As another example of application of the method of iterated hyper-extensions,
we present here a nonstandard proof of the existence of
certain monochromatic patterns made of products of sums; the proof
uses a special kind of multiplicatively idempotent elements.

Recall that if $A=\{a_1<\ldots<a_k\}$ is a finite set of natural numbers,
the corresponding set of finite sums is defined by setting:
$$\text{FS}(A)=\text{FS}(a_i)_{i=1}^k:=
\left\{\sum_{a\in F}a\ \middle\vert\ \emptyset\ne F\subseteq A\right\}=
\{a_{i_1}+\ldots+a_{i_s}\mid i_1<\ldots<i_s\le k\}.$$

If $A=\{a_1<a_2<\ldots<a_n<a_{n+1}<\ldots\}$ is infinite,
one defines:
\begin{multline*}
\text{FS}(A)=\text{FS}(a_n)_{n=1}^\infty:=\bigcup_{k\in\N}\text{FS}(a_i)_{i=1}^k=
\\
=
\left\{\sum_{a\in F}a\ \middle\vert\ \emptyset\ne F\subseteq A\ \text{finite}\right\}=
\{a_{n_1}+\ldots+a_{n_k}\mid n_1<\ldots<n_k\}.
\end{multline*}

A natural generalization is obtained by considering a sequence 
$(A_n\mid n\in\N)$ of nonempty sets of natural numbers; in this case we set
$$\text{FS}(A_n)_{n=1}^\infty:=\{a_{n_1}+\ldots+a_{n_k}\mid a_{n_j}\in A_{n_j}, n_1<\ldots<n_k\},$$
The same notion is also defined for products:
$$\text{FP}(A_n)_{n=1}^\infty:=\{a_{n_1}\cdot\ldots\cdot a_{n_k}\mid a_{n_j}\in A_{n_j}, n_1<\ldots<n_k\},$$

\begin{theorem}\label{productsums}
For every finite coloring $\N=C_1\cup\ldots\cup C_r$ and for every $f:\N\to\N$
there exists a sequence of
nonempty sets $(A_n\mid n\in\N)$ where $|A_n|=f(n)$ and such that
the following sets of products of sums is monochromatic:\footnote
{This property is essentially a reformulation of Theorem 3.1 in \cite{bo}. See Remark \ref{equivalentBowen}.}
$$\text{FP}(\text{FS}(A_n))_{n=1}^\infty=\left\{
\prod_{i=1}^{k}
\left(
\sum_{a\in F_i} a
\right)
\;\middle|\;
\emptyset \ne F_i \subseteq A_{n_i},
\
n_1 < \cdots < n_k
\right\}\subseteq C_j.
$$ 
We can assume that $\max\text{FP}(\text{FS}(A_1),\ldots,\text{FS}(A_n))<\min A_{n+1}$ for every $n$.
\end{theorem}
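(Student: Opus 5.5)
The plan is to imitate the nonstandard proof of Hindman's Theorem \ref{hindman}. Additive idempotents are replaced by a hypernatural number $\gamma\in\hN$ with two properties:
\begin{itemize}
\item[(a)] $\gamma$ is multiplicatively idempotent, \emph{i.e.} $\gamma\cdot{}^*\gamma\ueq\gamma$;
\item[(b)] $\gamma$ is a ``Folkman witness'': whenever $\gamma\in\hA$, the set $A$ contains $\text{FS}(F)$ for finite sets $F$ of every prescribed size and with arbitrarily large minimum.
\end{itemize}
In ultrafilter terms, let $\mathcal{K}\subseteq\beta\N$ be the family of ultrafilters $\U$ such that every $A\in\U$ contains sets $\text{FS}(F)$ with $|F|$ arbitrary and $\min F$ arbitrarily large. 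I will show that $\mathcal{K}$ is a nonempty closed subsemigroup of $(\beta\N,\odot)$. Ellis' Lemma then yields an idempotent $\U\in\mathcal{K}$, and any $\gamma$ with $\UU_\gamma=\U$ works; such a $\gamma$ exists by $\mathfrak{c}$-enlargement.

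First I check the three properties of $\mathcal{K}$.
\begin{itemize}
\item Nonempty: any additive idempotent $\gamma$ lies in $\mathcal{K}$. The Hindman proof gives $\text{FS}$ of an infinite sequence, and its tail beyond any bound can be used.
\item Closed: the defining condition is ``for every $A\in\U$'', so $\mathcal{K}$ is an intersection of basic closed sets.
\item Closed under products: let $\U,\V\in\mathcal{K}$ and $A\in\U\odot\V$. Then $\{n\mid A/n\in\V\}\in\U$, where $A/n=\{m\mid nm\in A\}$. Pick any $n$ in this set. Then $A/n\in\V$ contains $\text{FS}(F)$ with $|F|=k$ and $\min F$ large, so $n\,\text{FS}(F)=\text{FS}(nF)\subseteq A$.
\end{itemize}
So $\mathcal{K}$ is a compact right-topological semigroup and Ellis' Lemma applies.

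Next, fix the color $C=C_j$ with $\gamma\in{}^*C$. I recursively build finite sets $A_1,A_2,\ldots$ with $|A_n|=f(n)$. Write $P_n:=\text{FP}(\text{FS}(A_1),\ldots,\text{FS}(A_n))$, and set $P_0:=\emptyset$. The invariant is
\[
P_n\subseteq C\quad\text{and}\quad P_n\cdot\gamma\subseteq{}^*C.
\]
Suppose this holds at stage $n$. Consider the standard set
\[
B:=\{m\in C\mid m\gamma\in{}^*C\}\cap\bigcap_{x\in P_n}\{m\mid xm\in C,\ xm\gamma\in{}^*C\}.
\]
By (a), $\gamma\in{}^*C$ gives $\gamma\cdot{}^*\gamma\in{}^{**}C$. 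Likewise $x\gamma\in{}^*C$ gives $x\gamma\cdot{}^*\gamma\in{}^{**}C$, since $x\gamma\ueq x\gamma\,{}^*\gamma$ by property (3) of Theorem \ref{u-equivalenceproperties}. Hence $\gamma\in{}^*B$, where the defining conditions of $B$ are read at the next level by transfer.

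By (b), there is a finite set $A_{n+1}$ with $|A_{n+1}|=f(n+1)$, $\min A_{n+1}>\max P_n$, and $\text{FS}(A_{n+1})\subseteq B$. Every element of $P_{n+1}$ is either some $s\in\text{FS}(A_{n+1})$, some $xs$ with $x\in P_n$, or an element of $P_n$. The definition of $B$ gives the invariant for the first two kinds, and the induction hypothesis gives it for the third. The requirement on minima is built into the choice of $A_{n+1}$. The union over $n$ of the $P_n$ is exactly the required set $\text{FP}(\text{FS}(A_n))_{n=1}^\infty$.

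I expect the main obstacle to be the first step: producing a single $\gamma$ that is simultaneously multiplicatively idempotent and a Folkman witness. This is where I must verify carefully that the Folkman property is closed under $\odot$ in the correct order, namely multiplying the $\U$-chosen $n$ into the $\V$-large set. I must also check that $\mathcal{K}$ is stable enough to be a compact semigroup for Ellis' Lemma, \emph{i.e.} that left multiplication is continuous in the relevant variable. The recursion itself is routine bookkeeping with backward transfer, just as in Theorem \ref{hindman}.
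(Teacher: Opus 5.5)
Your proposal is correct and follows the paper's proof. Like the paper, you use a multiplicatively idempotent Folkman number obtained from Ellis' Lemma on the closed subsemigroup of Folkman ultrafilters in $(\beta\N,\odot)$, then run the Hindman-style recursion with the invariant $P_n\subseteq C$, $P_n\cdot\gamma\subseteq{}^*C$. The only cosmetic difference is in the inductive step: you apply the Folkman property directly to the standard set $B$, with $\gamma\in{}^*B$ by idempotency, while the paper picks hypernatural $\alpha_1<\dots<\alpha_\ell$ whose finite sums are all $u$-equivalent to $\alpha$ and then uses backward transfer.
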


We observe that already in the particular case when $f(1)=f(2)=f(3)=2$,
and only restricting to consider $A_1=\{a_1<a_2\}$, $A_2=\{b_1<b_2\}$
and $A_3=\{c_1<c_2\}$, one obtains
a rich monochromatic configuration of sums and products. Precisely:

\begin{corollary}\label{cor-sumproduct}
In every finite coloring of $\N$ there exist a monochromatic pattern of the 
following form, where $a_1<a_2<a_1+a_2<b_1<b_2<b_1+b_2<c_1<c_2$: 
\begin{multline*}
a_1,\ a_2,\ b_1,\ b_2,\ c_1,\ c_2,\ 
a_1+a_2,\ b_1+b_2,\ c_1+c_2,
\\
a_1 b_1,\ a_1 b_2,\ a_1 c_1,\ a_1 c_2,\ a_2 b_1,\ a_2 b_2,\ a_2 c_1,\ a_2 c_2,\
b_1 c_1,\ b_1 c_2,\ b_2 c_1,\ b_2 c_2,
\\
(a_1+a_2) b_1,\ (a_1+a_2) b_2,\ (a_1+a_2) c_1,\ (a_1+a_2)c_2,
\\
a_1(b_1+b_2),\ a_2(b_1+b_2),\ (b_1+b_2) c_1,\ (b_1+b_2) c_2,
\\
a_1(c_1+c_2),\ a_2(c_1+c_2),\ b_1(c_1+c_2), \ b_2(c_1+c_2),
\\
(a_1+a_2)(b_1+b_2),\ (a_1+a_2)(c_1+c_2),\ (b_1+b_2)(c_1+c_2),
\\
a_1 b_1 c_1,\ a_1 b_1 c_2,\ a_1 b_2 c_1,\ a_1 b_2 c_2,\
a_2 b_1 c_1,\ a_2 b_1 c_2,\ a_2 b_2 c_1,\ a_2 b_2 c_2, 
\\
(a_1+a_2) b_1 c_1,\ (a_1+a_2) b_1 c_2,\ (a_1+a_2) b_2 c_1,\ (a_1+a_2) b_2 c_2,
\\
a_1(b_1+b_2) c_1,\ a_1(b_1+b_2) c_2,\ a_2 (b_1+b_2) c_1,\ a_2 (b_1+b_2)c_2,
\\
a_1 b_1(c_1+c_2),\ a_1 b_2 (c_1+c_2),\ a_2 b_1 (c_1+c_2),\ a_2 b_2 (c_1+c_2),
\\
(a_1+a_2)(b_1+b_2) c_1,\ (a_1+a_2)(b_1+b_2) c_2,\
(a_1+a_2) b_1 (c_1+c_2),\ (a_1+a_2) b_2 (c_1+c_2),
\\
a_1 (b_1+b_2)(c_1+c_2),\ a_2(b_1+b_2)(c_1+c_2),\
(a_1+a_2)(b_1+b_2)(c_1+c_2).
\end{multline*}
\end{corollary}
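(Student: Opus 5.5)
This corollary is a specialization of Theorem \ref{productsums}, so I will not redo any nonstandard argument. Take any $f:\N\to\N$ with $f(1)=f(2)=f(3)=2$, say the constant function $2$. Apply Theorem \ref{productsums} to the given coloring $\N=C_1\cup\ldots\cup C_r$. This gives nonempty sets $A_n$ with $|A_n|=2$ and a color $C_j$ such that $\text{FP}(\text{FS}(A_n))_{n=1}^\infty\subseteq C_j$. It also gives the separation condition $\max\text{FP}(\text{FS}(A_1),\ldots,\text{FS}(A_n))<\min A_{n+1}$ for every $n$. Now write $A_1=\{a_1<a_2\}$, $A_2=\{b_1<b_2\}$, $A_3=\{c_1<c_2\}$ and simply ignore the sets $A_n$ with $n\ge4$.

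Next I check that every listed element lies in $\text{FP}(\text{FS}(A_n))_{n=1}^\infty$.

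\emph{Sums.} $\text{FS}(A_1)=\{a_1,a_2,a_1+a_2\}$, and similarly for $A_2$ and $A_3$. This covers the nine entries of the first line of the display.

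\emph{Products of two factors.} Taking $k=2$ with $(n_1,n_2)\in\{(1,2),(1,3),(2,3)\}$ gives every product $s\cdot t$ with $s\in\text{FS}(A_{n_1})$ and $t\in\text{FS}(A_{n_2})$. These are exactly the $27$ two-factor entries.

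\emph{Products of three factors.} Taking $k=3$ with $(n_1,n_2,n_3)=(1,2,3)$ gives all $27$ three-factor entries.

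A quick count confirms nothing is missed. The list should have $9+27+27=63$ entries, and a comparison with the display shows it matches.

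For the ordering, $a_1<a_2<a_1+a_2$ and $b_1<b_2<b_1+b_2$ are immediate. The separation clause with $n=1$ gives $a_1+a_2\le\max\text{FS}(A_1)<\min A_2=b_1$. With $n=2$ it gives $b_1+b_2<\max\text{FP}(\text{FS}(A_1),\text{FS}(A_2))<c_1$. Finally $c_1<c_2$ by labeling.

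There is no real obstacle here; the only care needed is bookkeeping. One must confirm that the definition of $\text{FP}(\text{FS}(A_n))$ allows any choice of $k$ and of increasing indices, and any nonempty $F_i\subseteq A_{n_i}$ (including singletons). One must also confirm that the separation clause of Theorem \ref{productsums} yields the stated strict inequalities.
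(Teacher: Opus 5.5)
Your argument is correct. With $f\equiv 2$, the $63$ listed entries are exactly $\text{FS}(A_1)\cup\text{FS}(A_2)\cup\text{FS}(A_3)$ together with the products indexed by $\{1,2\}$, $\{1,3\}$, $\{2,3\}$ and $\{1,2,3\}$. The separation clause of Theorem \ref{productsums} then yields the order constraints. For $b_1+b_2<c_1$ you only need $b_1+b_2\in\text{FS}(A_2)\subseteq\text{FP}(\text{FS}(A_1),\text{FS}(A_2))$, which is the case $k=1$; the strict inequality you wrote before the max is true but unnecessary.

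This specialization is exactly the observation the paper makes just before stating the corollary. However, the proof the paper actually writes out is a different, self-contained nonstandard argument:
\begin{itemize}
\item It takes infinite $\nu,\mu,\tau$ with $\nu\ueq\mu\ueq\tau\ueq\mu+\tau$ and $\nu\ueq\nu\cdot{}^*\nu$.
\item It places the three blocks on successive levels of infinity: $\alpha_i$ is built from $\mu,\tau$ and ${}^{***}\nu$, $\beta_i$ from ${}^*\mu,{}^*\tau$ and ${}^{****}\nu$, and $\gamma_i$ from ${}^{**}\mu,{}^{**}\tau$ and ${}^{*****}\nu$.
\item It checks that all $63$ combinations are $u$-equivalent to $\nu$, using the compatibility of $\ueq$ with iterated hyper-extensions together with multiplicative idempotency.
\item It concludes via Theorem \ref{PRcharacterizations}.
\end{itemize}

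Your route is shorter and generalizes immediately to any number of blocks of any sizes, with the ordering supplied for free by the separation clause. Its cost is that it leans on the recursive, step-by-step backward-transfer construction inside the proof of Theorem \ref{productsums}. The paper's route avoids that recursion and exhibits the whole configuration at once as a single $u$-equivalence class inside ${}^{******}\N$. That is the point of including it: it showcases the iterated hyper-extension technique. Both arguments ultimately draw on the same existence input, Remark \ref{existence}: a multiplicatively idempotent number that is $u$-equivalent to the members of a Schur triple.
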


Before going to the proof of the more general Theorem \ref{productsums}, we show
below a simple ``nonstandard" argument to directly prove the previous corollary.

\begin{proof}[Proof of Corollary \ref{cor-sumproduct}]
Let $\nu,\mu,\tau$ be infinite hypernatural numbers such that:\footnote
{The existence of such numbers $\nu,\mu,\tau$ is a particular
case of the existence of idempotent ``Folkman numbers." See Remark \ref{existence} below.}
\begin{enumerate}
\item
$\nu\,\ueq\,\mu\,\ueq\,\tau\,\ueq\,\mu+\tau$.
\item
$\nu$ is multiplicatively idempotent, \emph{i.e.}, $\nu\,\ueq\,\nu\cdot{}^*\nu$.
\end{enumerate}

Consider the following numbers in the iterated hyper-extensions of $\N$:
\begin{itemize}
\item
$\alpha_1:=\mu\cdot{}^{***}\nu$ and $\alpha_2:=\tau\cdot{}^{***}\nu$.
\item
$\beta_1:={}^*\mu\cdot{}^{****}\nu$ and $\beta_2:={}^*\tau\cdot{}^{****}\nu$.
\item
$\gamma_1:={}^{**}\mu\cdot{}^{*****}\nu$ and $\gamma_2:={}^{**}\tau\cdot{}^{*****}\nu$.
\end{itemize}
Note that all numbers in 
$\text{FP}(\{\alpha_1<\alpha_2\},\{\beta_1<\beta_2\},\{\gamma_1<\gamma_2\})$
are $u$-equivalent to $\nu$.
The proofs are straightforward; let us see here one example in detail to illustrate the idea:
\begin{multline*}
\alpha_1\cdot(\beta_1+\beta_2)\cdot\gamma_2=
(\mu\cdot{}^{***}\nu)\cdot({}^*\mu\cdot{}^{****}\nu+{}^*\tau\cdot{}^{****}\nu)\cdot({}^{**}\tau\cdot{}^{*****}\nu)=
\\
=\mu\cdot{}^*(\mu+\tau)\cdot{}^{**}\tau\cdot{}^{***}\nu\cdot{}^{****}\nu\cdot{}^{*****}\nu\ \ueq\
\nu\cdot{}^*\nu\cdot{}^{**}\nu\cdot{}^{***}\nu\cdot{}^{****}\nu\cdot{}^{*****}\nu\,\ueq\,\nu.
\end{multline*}

Since all the numbers in 
$\text{FP}(\{\alpha_1<\alpha_2\},\{\beta_1<\beta_2\},\{\gamma_1<\gamma_2\})\subseteq{}^{******}\N$
are $u$-equivalent to each other, by the nonstandard characterization of partition
regular families, we reach the thesis.
\end{proof}

A couple of simple particular cases are the following.

\begin{corollary}\label{cor-3sumproducts}
In every finite coloring of $\N$ there exist monochromatic patterns of the 
following forms: 
\begin{enumerate}
\item
$x,\ y,\ z,\ x+y,\ x\cdot z,\ y\cdot z,\ x\cdot z+y\cdot z$, where $x<y<z$.
\item
$a,\ b,\ x,\ y,\ a+b=xy$, where $x<y<a<b$.
\end{enumerate}
\end{corollary}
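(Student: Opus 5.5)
Both patterns will be read off from Corollary \ref{cor-sumproduct}, by taking a suitable sub-pattern of the monochromatic configuration found there and renaming the elements. No new nonstandard argument should be needed. Fix a finite coloring of $\N$ and take $a_1<a_2<a_1+a_2<b_1<b_2<b_1+b_2<c_1<c_2$ as given by Corollary \ref{cor-sumproduct}, with every listed element in the same color $C_j$.

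For (1), the plan is to set $x:=a_1$, $y:=a_2$, $z:=b_1$. The elements we need are
\begin{itemize}
\item $x,y,z$ (listed);
\item $x+y=a_1+a_2$ (listed);
\item $xz=a_1b_1$ and $yz=a_2b_1$ (listed);
\item $xz+yz=(a_1+a_2)b_1$ (listed).
\end{itemize}
So all seven numbers lie in $C_j$. The order condition $x<y<z$ holds because $a_1<a_2<a_1+a_2<b_1$.

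For (2), we need $x<y<a<b$ with $a+b=xy$. The natural choice is $x:=a_1$, $y:=b_1$, which puts the product $a_1b_1$ on the right-hand side. Writing $b_1$ itself as a sum would need $b_1$ to be of the form $b'+b''$ for some other block, so I will use the $b$-block instead. Take $x:=a_1$, $y:=b_1+b_2$, $a:=a_1b_1$, $b:=a_1b_2$. Then
$$a+b=a_1(b_1+b_2)=xy.$$
The numbers $a_1$, $b_1+b_2$, $a_1b_1$, $a_1b_2$ and $a_1(b_1+b_2)$ all appear in the list of Corollary \ref{cor-sumproduct}, so they are monochromatic.

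Checking the order condition for (2) is the main obstacle. We have $a=a_1b_1<a_1b_2=b$, and $x=a_1<b_1+b_2=y$. The remaining inequality $y<a$ reads $b_1+b_2<a_1b_1$, and this can fail: for instance, if $a_1=1$ then $a_1b_1=b_1<b_1+b_2$.

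The first fix I would try is a different choice of roles that avoids the issue: $x:=a_1+a_2$, $y:=b_1$, $a:=a_1b_1$, $b:=a_2b_1$. Then
$$a+b=(a_1+a_2)b_1=xy,$$
and all five numbers are listed, hence monochromatic. This choice still needs $y=b_1<a_1b_1=a$, that is $a_1\ge 2$. Since $a_1\ge 1$ gives only $a_1b_1\ge b_1$, I must make sure that $a_1\ge 2$, and in fact that the inequality is strict.

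To guarantee this, I will appeal to the proof of Corollary \ref{cor-sumproduct} rather than its statement. There $\alpha_1=\mu\cdot{}^{***}\nu$ is infinite. So, by backward transfer, the standard witnesses can be chosen with all elements larger than any prescribed bound, in particular with $a_1\ge 2$. Equivalently, one can restrict to the coloring induced on $\N\setminus\{1\}$, putting $1$ in a new color. With $a_1\ge 2$ we get $a_1b_1>b_1$, so $x<y<a<b$. Note that $x=a_1+a_2<b_1=y$ and $a=a_1b_1<a_2b_1=b$ hold automatically.
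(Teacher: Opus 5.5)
Your proposal is correct and uses exactly the paper's substitutions into the configuration of Corollary~\ref{cor-sumproduct}: $x:=a_1$, $y:=a_2$, $z:=b_1$ for (1), and $x:=a_1+a_2$, $y:=b_1$, $a:=a_1b_1$, $b:=a_2b_1$ for (2). You also observe that $y<a$ requires $a_1\ge 2$, which the paper leaves implicit, and your fix of giving $1$ its own color settles it cleanly, since $a_1<a_2$ then forces $a_1\neq 1$.
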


\begin{proof}
Consider the monochromatic configuration seen in the previous corollary.
Pattern $(1)$ is obtained by taking $x:=a_1$, $y:=a_2$, and $z:=b_1$;
and pattern $(2)$ is obtained by taking $x:=a_1+a_2$, $y:=b_1$, $a:=a_1b_1$, and $b:=a_2b_1$.
\end{proof}

\begin{remark}
The pattern (2) above was studied in \cite{bo},
where a proof of its monochromaticity is provided that only relies on Ramsey's Theorem. 
This paper also gives an explicit (triple exponential) bound on the minimum length $N=N(r)$
of an initial interval with the property that every $r$-coloring 
contains such a monochromatic pattern.
\end{remark}

Let us finally turn to the proof of the general result.

\begin{proof}[Proof of Theorem \ref{productsums}]
The argument is similar to the one used for Hindman's Theorem.
Let $\alpha$ be an infinite hypernatural number such that:
\begin{enumerate}
\item
$\alpha$ is an ``Folkman number", \emph{i.e.},
for every $\ell$ there exist numbers $\alpha_1<\ldots<\alpha_\ell$ in $\hN$
such that the sum $\sum_{j\in F}\alpha_j\,\ueq\,\alpha$ for every nonempty $F\subseteq\{1,\ldots,\ell\}$.
\item
$\alpha$ is multiplicatively idempotent, \emph{i.e.}, $\alpha\,\ueq\,\alpha\cdot{}^*\alpha$.
\end{enumerate}

(About the existence of such numbers $\alpha$, see Remark \ref{existence} below.)
Given a finite coloring $\N=C_1\cup\ldots\cup C_r$, pick the color $C=C_j$ such that $\alpha\in{}^*C$.
We recursively define nonempty finite sets $A_n\subset\N$ such that:
\begin{itemize}
\item
$X_n:=\text{FP}(\text{FS}(A_1),\ldots,\text{FS}(A_n))\subseteq C$.
\item
$X_n\cdot\alpha\subseteq {}^*C$.
\item
$|A_n|=f(n)$.
\item
$\max X_n<\min A_{n+1}$.
\end{itemize}

Pick hypernatural numbers $\alpha_1<\ldots<\alpha_{f(1)}$ such that
$\sum_{j\in F}\alpha_j\,\ueq\,\alpha$, and hence $\sum_{j\in F}\alpha_j\in{}^*C$, 
for every nonempty $F\subseteq\{1,\ldots,f(1)\}$.
By multiplicative idempotency, 
$(\sum_{j\in F}\alpha_j)\cdot{}^*\alpha\,\ueq\,\alpha\cdot{}^*\alpha\,\ueq\,\alpha$,
and so $(\sum_{j\in F}\alpha_j)\cdot{}^*\alpha\in{}^{**}C$.
By \emph{backward transfer} we obtain the existence of natural numbers
$a_1<\ldots<a_{f(1)}$ such that $\sum_{j\in F}a_j\in C$ and $(\sum_{j\in F}a_j)\cdot\alpha\in{}^*C$
for every nonempty $F\subseteq\{1,\ldots,f(1)\}$.
The set $A_1:=\{a_1<\ldots<a_{f(1)}\}$ satisfies the induction base step, since trivially 
$\text{FP}(\text{FS}(A_1))=\text{FS}(A_1)=\{\sum_{i\in F}a_i\mid\emptyset\ne F\subseteq\{1,\ldots,f(1)\}\}$.

At the inductive step $n+1$, by the hypothesis we know that
$x\cdot\alpha\in{}^*C$ for every $x\in X_n$.
Then $\alpha\in {}^*C(n)$ where $C(n):=C\cap\bigcap_{x\in X_n}C/x$.
Pick infinite hypernatural numbers $\alpha_1<\ldots<\alpha_{f(n+1)}$ such that
$\sum_{j\in F}\alpha_j\,\ueq\,\alpha$,
and hence $\sum_{j\in F}\alpha_j\in{}^*C(n)$, for every nonempty $F\subseteq\{1,\ldots,f(n+1)\}$.
By multiplicative idempotency, it follows that also
$(\sum_{j\in F}\alpha_j)\cdot{}^*\alpha\in{}^{**}C(n)$.
Now apply \emph{backward transfer} to obtain the existence of 
natural numbers $a_1<\ldots<a_{f(n+1)}$,
all larger than $\max X_n$, and such that 
for every nonempty $F\subseteq\{1,\ldots,f(n+1)\}$,
it is $\sum_{j\in F}a_j\in C(n)$, and $(\sum_{j\in F}a_j)\cdot \alpha\in{}^*C(n)$.
If $A_{n+1}:=\{a_1<\ldots<a_{f(n+1)}\}$, this means that 
$\text{FS}(A_{n+1})\subseteq C$, $\text{FS}(A_{n+1})\cdot X_n\subseteq C$,
$\text{FS}(A_{n+1})\cdot\alpha\subseteq{}^*C$, and 
$\text{FS}(A_{n+1})\cdot X_n\cdot\alpha\subseteq{}^{**}C$.
We conclude that $A_{n+1}$ satisfies all desired properties, since
$X_{n+1}=X_n\cup(\text{FS}(A_{n+1})\cdot X_n)$.
\end{proof}

\begin{remark}\label{existence}
An ultrafilter $\U$ on $\N$ is called \emph{Folkman} if it is a ``witness" of
Folkman's Theorem, \emph{i.e.}, if for every $A\in\U$ and for every $\ell\in\N$ there exist
$x_1<\ldots<x_\ell$ such that $\sum_{j\in F}x_j\in A$ for all nonempty $F\subseteq\{1,\ldots,\ell\}$. 
The space $\mathbb{F}\subseteq\beta\N$ of Folkman ultrafilters
is closed and is a bilateral ideal in $(\beta\N,\odot)$, and therefore is a sub-semigroup of $(\mathbb{F},\odot)$. 
So, one can apply Ellis' Lemma and obtain
the existence of a non-principal ultrafilter $\U\in\mathbb{F}$ that is multiplicatively idempotent. 
In nonstandard terms, this means that there
exists an infinite hypernatural number $\alpha\in\hN$ such that:
\begin{enumerate}
\item
For every $\ell$ there exist $\alpha_1<\ldots<\alpha_\ell$
such that the sum $\sum_{j\in F}\alpha_j\,\ueq\,\alpha$ for every nonempty $F\subseteq\{1,\ldots,\ell\}$.
\item
$\alpha\,\ueq\,\alpha\cdot{}^*\alpha$.
\end{enumerate}
\end{remark}

\begin{remark}\label{equivalentBowen}
The following property was recently proved by M. Bowen (see Theorem 3.1 in \cite{bo}):
\begin{itemize}
\item[$(\dagger)$]
\ \emph{For every finite coloring of $\N$ and for every $f:\N\to\N$,
there exists a sequence of finite sets $(S_n)$ with $|S_n|=f(n)$ such that,
for every finite $F\subset\bigcup_{n\in\N}S_n$, the following numbers are monochromatic:}
$$\prod_{n: S_n\cap F\ne\emptyset}\sum_{s\in S_n\cap F} s.$$
\end{itemize}

We observe that this result is equivalent to the property of Theorem \ref{productsums}.
Indeed, given a finite $F\subseteq\bigcup_{n\in\N}A_n$, 
let $\{n_1<\ldots<n_k\}=\{n\in\N\mid A_n\cap F\ne\emptyset\}$,
and let $b_{n_j}:=\sum_{s\in A_{n_j}\cap F}\in\text{FS}(A_{n_j})$.
Then $\prod_{n: A_n\cap F\ne\emptyset}\sum_{s\in A_n\cap F} s=
b_{n_1}\cdot\ldots\cdot b_{n_k}\in\text{FP}(\text{FS}(A_n))_{n=1}^\infty$.
Conversely, assume $(\dagger)$, and let $a\in\text{FP}(\text{FS}(S_n))_{n=1}^\infty$. Then
$a=b_{n_1}\cdot\ldots\cdot b_{n_k}$ where $b_{n_j}:=\sum_{s\in G_j}s$ 
for appropriate nonempty subsets $G_j\subseteq S_{n_j}$ with $n_1<\ldots<n_k$.
If we let $F:=\bigcup_{j=1}^k G_j$, and we assume that the sets $S_{n_j}$ are pairwise disjoint
(this is always possible following Bowen's proof), it is easily verified that 
$a=\prod_{n: S_n\cap F\ne\emptyset}\sum_{s\in S_n\cap F} s$.

The proof of Theorem \ref{productsums} uses a multiplicatively idempotent ultrafilter that
is a witness of Folkman's Theorem. On the other hand,
Bowen's proof uses multiplicatively central sets, \emph{i.e.},
members of multiplicatively idempotent ultrafilter that are minimal,
and the observation that any multiplicatively minimal ultrafilter is a witness of Folkman's Theorem.\footnote
{Indeed, every set in a multiplicatively minimal ultrafilter is multiplicatively
piecewise syndetic, and every multiplicatively piecewise syndetic set contains 
arbitrarily large sets of finite sums.}
\end{remark}

\section{Infinite monochromatic patterns and the Ramsey property}

A series of recent results obtained by B. Kra, J. Moreira, F. Richter, and D. Robertson 
(see \cite{kmrr} and the references therein)
has sparked researchers' interest in the existence of infinite arithmetic monochromatic patterns 
originating from sequences $(a_n)_{n\in\N}$ of natural numbers. 
Historically, the first example of such monochromatic patterns is provided by the 
famous Hindman's Theorem. It is worth noting that, while finite arithmetic patterns
have been extensively studied and a good understanding of their monochromaticity 
properties has been achieved, little is known about infinite monochromatic patterns.

\smallskip
Let $f:\N^2\to\N$ be any function. By a straight application of Ramsey's
Theorem for pairs, it is proved that for every finite coloring
of $\N$ there exists an increasing sequence $(a_n)_{n\in\N}$
such that all elements $f(a_n,a_m)$ for $n<m$ are monochromatic.
If we also require the elements of the sequence to be of the same
color, we obtain the following interesting notion.

\begin{definition}
A function $f:\N^2\to\N$ has the \emph{Ramsey property} if for every finite coloring of $\N$
there exists an increasing sequence $(a_n)_{n\in\N}$ such that
$a_n, f(a_n,a_m)$ are monochromatic for all $n<m$.
\end{definition}

As a particular case of Hindman's Theorem, the sum operation
$\text{Sum}:(n,m)\mapsto n+m$ has the Ramsey property.
This is also true for the product $\text{Prod}:(n,m)\mapsto n\cdot m$
and, more generally, for every associative operation on $\N$.\footnote
{For instance, all functions $(n,m)\mapsto \ell nm+k(n+m)+\frac{k(k-1)}{\ell}$ 
where $\ell$ divides $k(k-1)$ have the Ramsey property.
(These associative operations have been introduced and studied in \cite{d2}.)}

\begin{definition}
Let $f,g:\N^2\to\N$.
The pair $(f,g)$ satisfies the \emph{Ramsey property} if 
for every finite coloring of $\N$ there exists an increasing 
sequence $(a_n)_{n\in\N}$ such that
$a_n, f(a_n,a_m), g(a_n,a_m)$ are monochromatic for all $n<m$.

We say that $(f,g)$ satisfies the \emph{weak Ramsey property} if 
we do not require the elements of the sequence to be of the same color, 
\emph{i.e.}, if for every finite coloring of $\N$ there exists an increasing 
sequence $(a_n)_{n\in\N}$ such that
$f(a_n,a_m), g(a_n,a_m)$ are monochromatic for all $n<m$.
\end{definition}

We observe that the above definition could be naturally generalized to $t$-uples of functions.
However, for simplicity, here we will only focus on pairs.

Below, we will present examples of pairs of functions with the Ramsey property,
as well as negative examples.
In the proofs, we will use special pairs of hypernatural numbers, named ``Ramsey pairs".

As a starting observation, we note that the Ramsey property 
is closely related to Ramsey's Theorem for pairs, that states the partition regularity on $\N^2$
of the following family of infinite patterns:
$$\mathfrak{R}=\big\{\{(a_n,a_m)\mid n<m\}\mid (a_n)_{n\in\N}\ \text{increasing sequence in}\ \N\big\}.$$

The following notion was introduced in \cite{dlmrv}, and revealed useful in proving
several Ramsey results.

\begin{definition}
A pair $(\alpha,\beta)\in\hN^2$ is a \emph{Ramsey pair}
if whenever $(\alpha,\beta)\in\hX$ there exists an increasing sequence
$(a_n)_{n\in\N}$ such that $(a_n,a_m)\in X$ for all $n<m$.
\end{definition}

Note that, by the characterizations of Theorem \ref{PRcharacterizations},
that also holds for families of subsets of $\N^2$, Ramsey pairs exist.

\begin{proposition}\label{Ramseypairs}
If $(\alpha,\beta)$ is a Ramsey pair, then:
\begin{enumerate}
\item
$\alpha<\beta$.\footnote
{In fact a much stronger property holds showing that $\beta$ is
``way larger" than $\alpha$ (see Proposition \ref{Ramseypairproperty} below).}

\item
$\alpha,\beta\in\hN\setminus\N$ are infinite.
\item
$\alpha\ueq\beta$.
\end{enumerate}
\end{proposition}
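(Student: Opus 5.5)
Each of the three properties will be proved by contradiction: we exhibit a set $X\subseteq\N^2$ with $(\alpha,\beta)\in\hX$ that contains no pattern $\{(a_n,a_m)\mid n<m\}$ for an increasing sequence $(a_n)$. This contradicts the definition of Ramsey pair. The sets $X$ are chosen so that the relevant elementary property transfers directly.

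For (1), take $X:=\{(x,y)\in\N^2\mid x\ge y\}$. By transfer, $\hX=\{(\xi,\eta)\in\hN^2\mid \xi\ge\eta\}$. If $\alpha\ge\beta$, then $(\alpha,\beta)\in\hX$. But any increasing sequence has $a_1<a_2$, so $(a_1,a_2)\notin X$, a contradiction; hence $\alpha<\beta$.

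For (2), first suppose $\alpha=k\in\N$. Let $X:=\{k\}\times\N$, so that $\hX=\{k\}\times\hN$ contains $(\alpha,\beta)$. An increasing sequence with $(a_n,a_m)\in X$ for all $n<m$ would need $a_n=k$ for every $n$ (from the pair $(a_n,a_{n+1})$), which is impossible. So $\alpha$ is infinite, and then $\beta>\alpha$ by (1), so $\beta$ is infinite as well. (The case $\beta=k$ finite could also be handled directly with $X:=\N\times\{k\}$, but (1) makes it unnecessary.)

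For (3), suppose toward a contradiction that $\alpha\not\ueq\beta$. Then there is $A\subseteq\N$ with $\alpha\in\hA$ and $\beta\notin\hA$, or the reverse. In the first case set $X:=A\times A^c$; since ${}^*(A\times A^c)=\hA\times(\hN\setminus\hA)$ by the transfer properties listed earlier, we get $(\alpha,\beta)\in\hX$. Now take an increasing sequence $(a_n)$ with $(a_n,a_m)\in X$ for all $n<m$. From $(a_1,a_2)\in X$ we get $a_2\in A^c$, and from $(a_2,a_3)\in X$ we get $a_2\in A$, a contradiction. The reverse case is symmetric with $X:=A^c\times A$.

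The main subtlety is only that each case needs at least three indices in the sequence, which an infinite increasing sequence trivially supplies. So no step is a real obstacle; the care needed is just in checking that each $\hX$ is computed correctly via transfer.
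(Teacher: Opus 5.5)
Your proof is correct and follows the paper's argument essentially verbatim: you use the same witnessing sets, namely $\{(x,y)\mid x\ge y\}$ for (1), $\{k\}\times\N$ for (2), and $A\times A^c$ for (3). The only cosmetic difference is in (3), where you treat the reverse case separately via $A^c\times A$, whereas the paper simply chooses $A$ with $\alpha\in\hA$ (replacing $A$ by its complement if needed).
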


\begin{proof}
$(1)$. If $\alpha\ge\beta$ then $(\alpha,\beta)\in{}^*(\Delta^-)$
where $\Delta^-=\{(k,h)\in\N^2\mid k\ge h\}$,
and clearly for every increasing sequence $(a_n)_{n\in\N}$
one has $(a_n,a_m)\notin\Delta^-$ for all $n<m$.

$(2)$. By $(1)$, it is enough to show that $\alpha$ is infinite.
If $\alpha=k\in\N$ was finite, then we would have
$(k,\beta)\in\{k\}\times\hN={}^*(\{k\}\times\N)$, 
and clearly there is no increasing sequence $(a_n)_{n\in\N}$
such that $(a_n,a_m)\in\{k\}\times\N$ for all $n<m$.

$(3)$. If it was $\alpha\not\ueq\beta$ then we could pick $A\subseteq\N$ such
that $\alpha\in\hA$ and $\beta\in{}^*(A^c)$.
Then $(\alpha,\beta)\in{}^*(A\times A^c)$; however, for every
increasing sequence $(a_n)_{n\in\N}$ and for every $n\in\N$,
one has that $(a_n,a_{n+1})\in A\times A^c\Rightarrow (a_{n+1},a_{n+2})\notin A\times A^c$.
\end{proof}

By the ultrafilter characterizations seen in Remark \ref{PR-ultrafilters},
that also hold for families of subsets of $\N^2$, 
we see that the partition regularity of the family $\mathfrak{R}$
is equivalent to the existence of PR-witness ultrafilters for $\mathfrak{P}$,
that we name ``Ramsey's witnesses."

\begin{definition}
An ultrafilter $\W$ on $\N^2$ is a \emph{Ramsey's witness}
if for every $X\in\W$ there exists an increasing sequence $(a_n)_{n\in\N}$
such that $(a_n,a_m)\in X$ for all $n<m$.
\end{definition}

It is readily seen that
$(\alpha,\beta)$ is a Ramsey pair if and only if
the generated ultrafilter $\UU_{(\alpha,\beta)}:=\{X\subseteq\N^2\mid (\alpha,\beta)\in\hX\}$
on $\N^2$ is a Ramsey's witness.

\begin{remark}
Note that the properties seen in Proposition \ref{Ramseypairs} correspond to the following: 
\begin{itemize}
\item
\emph{Let $\W$ be a Ramsey's witness. Then:}
\begin{enumerate}
\item
\emph{The upper diagonal
$\Delta^+:=\{(n,m)\in\N^2\mid n<m\}\in\W$.}
\item
\emph{The image ultrafilters $\pi_1(\W)$ and $\pi_2(\W)$ on $\N$ where
$\pi_1:(n,m)\mapsto n$ and $\pi_2:(n,m)\mapsto m$ are the canonical projections, 
are non-principal.}
\item
\emph{$\pi_1(\W)=\pi_2(\W)$.}
\end{enumerate}
\end{itemize}
\end{remark}

\smallskip
The Ramsey properties of functions
can be equivalently reformulated in neat and simple terms.

\begin{theorem}\label{characterizationsRamseyproperty}
Let $f,g:\N^2\to\N$.
\begin{itemize}
\item
The following conditions are equivalent:
\begin{enumerate}
\item
The function $f$ has the Ramsey property.
\item
There exists a Ramsey pair $(\alpha,\beta)\in\hN^2$ such that
$\hf(\alpha,\beta)\ueq\alpha$.
\item
There exists a Ramsey's witness $\W\in\beta\N^2$ such that $f(\W)=\pi_1(\W)$.
\end{enumerate}
\item
The following conditions are equivalent:
\begin{enumerate}
\item
The pair $(f,g)$ has the Ramsey property.
\item
There exists a Ramsey pair $(\alpha,\beta)\in\hN^2$ such that
$\hf(\alpha,\beta)\ueq \hg(\alpha,\beta)\ueq\alpha$.
\item
There exists a Ramsey's witness $\W\in\beta\N^2$ such that
$f(\W)=g(\W)=\pi_1(\W)$.
\end{enumerate}
\item
The following conditions are equivalent:
\begin{enumerate}
\item
The pair $(f,g)$ has the weak Ramsey property.
\item
There exists a Ramsey pair $(\alpha,\beta)\in\hN^2$ such that
$\hf(\alpha,\beta)\ueq \hg(\alpha,\beta)$.
\item
There exists a Ramsey's witness $\W\in\beta\N^2$ such that
$f(\W)=g(\W)$.
\end{enumerate}
\end{itemize}
\end{theorem}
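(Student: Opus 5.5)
I will prove the three blocks in parallel, since they share the same scheme. The equivalences $(2)\Leftrightarrow(3)$ are just translation. Given a Ramsey pair $(\alpha,\beta)$, the ultrafilter $\W:=\UU_{(\alpha,\beta)}$ on $\N^2$ is a Ramsey's witness, as observed right after the definition. For any $h:\N^2\to\N$ one has $h(\W)=\UU_{{}^*h(\alpha,\beta)}$, because $A\in h(\W)\Leftrightarrow(\alpha,\beta)\in{}^*(h^{-1}(A))\Leftrightarrow{}^*h(\alpha,\beta)\in\hA$. Applying this with $h=\pi_1$ gives $\pi_1(\W)=\UU_\alpha$. Since $u$-equivalence is equality of generated ultrafilters, conditions like $\hf(\alpha,\beta)\ueq\alpha$ become $f(\W)=\pi_1(\W)$. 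Conversely, given a Ramsey's witness $\W$, I use $\mathfrak c$-enlargement on $\W$ (a family of at most $\mathfrak c$ subsets of $\N^2$ with the finite intersection property) to get $(\alpha,\beta)\in\bigcap_{X\in\W}\hX$. Then $\UU_{(\alpha,\beta)}=\W$, so $(\alpha,\beta)$ is a Ramsey pair with the required $u$-equivalences.

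For $(2)\Rightarrow(1)$, fix a coloring $\N=C_1\cup\ldots\cup C_r$ and let $C=C_i$ be the color with $\alpha\in{}^*C$. In the second block, $\hf(\alpha,\beta)\ueq\hg(\alpha,\beta)\ueq\alpha$ gives $\hf(\alpha,\beta),\hg(\alpha,\beta)\in{}^*C$. Let
$$X:=\{(n,m)\mid n\in C,\ f(n,m)\in C,\ g(n,m)\in C\}.$$
By the coherence proposition, ${}^*X=\{(\xi,\zeta)\mid \xi\in{}^*C,\ \hf(\xi,\zeta)\in{}^*C,\ \hg(\xi,\zeta)\in{}^*C\}$, which contains $(\alpha,\beta)$. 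Since $(\alpha,\beta)$ is a Ramsey pair, there is an increasing sequence with $(a_n,a_m)\in X$ for all $n<m$. This already gives $a_n,f(a_n,a_m),g(a_n,a_m)\in C$ for all $n<m$: every index $n$ has some larger $m$, so the clause $a_n\in C$ covers all terms of the sequence. The first block is the special case $g=f$. In the weak block I let $C$ be the color of $\hf(\alpha,\beta)$ and drop the clause $n\in C$.

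For $(1)\Rightarrow(2)$, I mimic the proof of $(1)\Rightarrow(2)$ in Theorem \ref{PRcharacterizations}. For each $A\subseteq\N$ let $\Lambda(A)\subseteq\N^2$ be the set of pairs $(n,m)$ with $n<m$ such that $n,f(n,m),g(n,m)$ all lie in $A$ or all lie in $A^c$. I also want to intersect with Ramsey-type requirements. The cleanest route is to consider the family $\mathfrak{R}_{A}$ of sets $Y\subseteq\N^2$ that contain no ``Ramsey configuration'' $\{(a_n,a_m)\mid n<m\}$ lying in $\Lambda(A)$, and to look for $(\alpha,\beta)$ in every ${}^*\Lambda(A)$ and in no ${}^*Y$ for $Y$ outside the Ramsey-witness requirement. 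More concretely, I take the family
$$\G=\{\Lambda(A)\mid A\subseteq\N\}\cup\{Z\subseteq\N^2\mid Z^c \text{ contains no set } \{(a_n,a_m)\mid n<m\},\ (a_n) \text{ increasing}\}.$$
The second kind of set is a ``Ramsey-large'' co-set.

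To check finite intersections, take $A_1,\ldots,A_k$ and $Z_1,\ldots,Z_l$. Color $\N$ by the atoms generated by $A_1,\ldots,A_k$. The hypothesis (1) gives an increasing sequence whose configuration lies in $\bigcap_i\Lambda(A_i)$. Then partition that configuration according to membership in $Z_1,\ldots,Z_l$, a finite coloring of pairs. Ramsey's Theorem for pairs yields a subsequence whose pairs are monochromatic for this coloring. That subsequence's pairs cannot all miss some $Z_j$, since its configuration would then sit in $Z_j^c$, which contains none. So they lie in every $Z_j$, and also in every $\Lambda(A_i)$, so the intersection is nonempty.

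Enlargement (the family has size at most $\mathfrak c$) gives $(\alpha,\beta)$ in all hyper-extensions. It is a Ramsey pair: if $(\alpha,\beta)\in\hX$ but $X$ contained no configuration, then $X^c\in\G$, which is a contradiction. The membership in every ${}^*\Lambda(A)$ then yields $\alpha\ueq\hf(\alpha,\beta)\ueq\hg(\alpha,\beta)$. I expect this step, choosing $\G$ and verifying its finite intersection property via Ramsey's Theorem, to be the main obstacle. The other directions are routine transfer.
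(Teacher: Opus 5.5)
Your proof is correct and follows essentially the paper's route. You use the same family, namely the pattern-encoding sets together with the Ramsey-large sets, and you get the finite intersection property from Ramsey's Theorem for pairs. The realizing point you take (via $\mathfrak{c}$-enlargement rather than extending to an ultrafilter $\W$) is then automatically a Ramsey pair. The remaining differences are cosmetic: you use the sets $\Lambda(A)$ for all $A\subseteq\N$, as in the proof of Theorem \ref{PRcharacterizations}, instead of first fixing a PR-witness ultrafilter $\U$, and you apply Ramsey's Theorem directly to the sequence instead of first showing that Ramsey-large sets are closed under finite intersections.
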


\begin{proof}
First of all, we observe that items $(2)$ and $(3)$ in each of the three groups are trivially equivalent,
since they are the nonstandard and the ultrafilter formulations respectively of the same properties.
For example, in the first group, $\hf(\alpha,\beta)\ueq\alpha$ means that
$\UU_{\hf(\alpha,\beta)}=\UU_\alpha$; then recall that $\UU_{\hf(\alpha,\beta)}=f(\UU_{(\alpha,\beta)})$ and
that $\UU_\alpha=\pi_1(\UU_{(\alpha,\beta)})$.

We observe that a function $f$ satisfies the Ramsey property if and only if the pair $(f,f)$
satisfy the Ramsey property, so the first set of equivalences follows from the second.

We now prove the equivalences in the second group
(the equivalences for the weak Ramsey property are proved in the same way).
Assume first that $\W$ is a Ramsey's witness such that 
$\U:=f(\W)=g(\W)=\pi_1(\W)$. Given a coloring $\N=C_1\cup\ldots\cup C_r$,
let $C=C_i$ be the color such that $C\in\U$.
Then $X:=f^{-1}(C)\cap g^{-1}(C)\cap\pi_1^{-1}(C)\in\W$, and so there exists
an increasing sequence $(a_n)_{n\in\N}$ such that
$(a_n,a_m)\in X$ for all $n<m$; this means that $f(a_n,a_m)\in C$, $g(a_n,a_m)\in C$,
and $\pi_1(a_n,a_m)=a_n\in C$, as desired.

Conversely, assume that the following family is partition regular on $\N$:
$$\mathfrak{P}:=\left\{\bigcup_{n<m}\{a_n, f(a_n,a_m), g(a_n,a_m)\}\ \middle|\ 
(a_n)_{n\in\N}\ \text{increasing sequence in}\ \N\right\}.$$
For $A\subseteq\N$, we say that a sequence of natural numbers $(a_n)_{n\in\N}$
is ``good for $A$" if it is increasing and such that $a_n, f(a_n,a_m), g(a_n,a_m)\in A$
for all $n<m$. By Remark \ref{PR-ultrafilters}, we can pick an ultrafilter $\U$ on $\N$ that is
PR-witness for $\mathfrak{P}$, \emph{i.e.}, such that for every $A\in\U$ there
exists a sequence $(a_n)_{n\in\N}$ which is good for $A$.

Call a subset $X\subseteq\N^2$ ``Ramsey-large" if for every
increasing sequence $(b_n)_{n\in\N}$ of natural numbers
there exist $n<m$ such that $(b_n,b_m)\in X$. Then consider the family
$$\mathcal{F}:=\{\Lambda_A\mid A\in\U\}\cup\mathcal{R}$$
where $\Lambda_A:=\{(a_n,a_m)\mid (a_n)_{n\in\N}\ \text{is good for}\ A,\ n<m\}$,
and $\mathcal{R}$ is the family of Ramsey-large sets.
We claim that the family $\F$ has the finite intersection property.

Notice first that:
\begin{itemize}
\item
\emph{The family $\mathcal{R}$ of Ramsey-large sets
is closed under finite intersections}. 
\end{itemize}
To see this, pick
$X_1,\ldots,X_k\in\mathcal{R}$ and assume for the sake of contradiction
that their intersection $Y:=X_1\cap\ldots\cap X_k\notin\mathcal{R}$. 
This means that there is an increasing sequence $(b_n)_{n\in\N}$ such that
$(b_n,b_m)\in Y^c$ for all $m<n$. Since $Y^c=(X_1)^c\cup\ldots\cup(X_k)^c$,
by Ramsey's Theorem there exists $i$ and a subsequence $(b_{n_s})_{s\in\N}$
such that $(b_{n_s},b_{n_t})\in(X_i)^c$ for all $s<t$. This contradicts the assumption
that $X_i$ is Ramsey-large.

Now let sets $A_1,\ldots,A_m\in\U$ and sets $X_1,\ldots,X_k\in\mathcal{R}$ be given.
We have to show that $\Lambda_{A_1}\cap\ldots\cap\Lambda_{A_m}\cap X_1\cap\ldots\cap X_k\ne\emptyset$.
Note that $\Lambda_{A_1}\cap\ldots\cap\Lambda_{A_m}=\Lambda_B$ where
$B=A_1\cap\ldots\cap A_k\in\U$, and so we can pick an increasing sequence $(a_n)_{n\in\N}$
that is good for $B$. 
Since $Y:=X_1\cap\ldots\cap X_k$ is Ramsey-large, there exist $n<m$
such that the pair $(b_n,b_m)\in Y$,
and hence $(b_n,b_m)\in\Lambda_B\cap Y\ne\emptyset$.

Pick any ultrafilter $\W$ on $\N^2$ that extends the family $\F$.
We observe that for every $A\in\U$, the set 
$\pi_1^{-1}(A)\cap f^{-1}(A)\cap g^{-1}(A)\supseteq\Lambda_A\in\W$,
and hence $A\in\pi_1(\W)\cap f(\W)\cap g(\W)$. 
This shows that $\U=\pi_1(\W)=f(\W)=g(\W)$.
Finally, we observe that $\W$ is a Ramsey's witness. Indeed,
let $X\in\W$; if by contradiction for every increasing
sequence $(a_n)_{n\in\N}$ there exist $n<m$ such that $(a_n,a_m)\in X^c$,
then $X^c\in\mathcal{R}\subseteq\W$, and we would have 
$\emptyset=X\cap X^c\in\W$.
\end{proof}

Below is a useful topological characterization of Ramsey's witnesses.

\begin{proposition}\label{Ramseyclosuretensors}
The space $RW:=\{\W\in\beta\N^2\mid\W\ \text{is a Ramsey's witness}\}$ is the 
closure of the space $\text{TP}=\{\U\otimes\U\mid\U\in\beta\N\setminus\N\}$
of tensor powers of non-principal ultrafilters.
Equivalently, in nonstandard terms, $(\alpha,\beta)\in\hN^2$ is 
a Ramsey pair if and only if the following condition holds:
\begin{itemize}
\item
If $(\alpha,\beta)\in\hX$ then there exists an infinite $\gamma\in\hN$ such that $(\gamma,{}^*\gamma)\in{}^{**}X$.
\end{itemize}
\end{proposition}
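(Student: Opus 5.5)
The plan is to prove the two inclusions $RW\subseteq\overline{\text{TP}}$ and $\overline{\text{TP}}\subseteq RW$. Then I will translate the result into the nonstandard condition, using that $\UU_\gamma\otimes\UU_\gamma=\UU_{(\gamma,{}^*\gamma)}$ and that every ultrafilter is generated by some $\gamma\in\hN$ ($\mathfrak{c}$-enlargement). Recall that the basic clopen sets of $\beta\N^2$ are $\overline{X}=\{\W\mid X\in\W\}$. So $\W\in\overline{\text{TP}}$ means: for every $X\in\W$ there is a non-principal $\U$ with $X\in\U\otimes\U$. In nonstandard terms, there is an infinite $\gamma$ with $(\gamma,{}^*\gamma)\in{}^{**}X$. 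This is exactly the bulleted condition, so the two formulations coincide once the topological statement is proved.

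\emph{Step 1: $\overline{\text{TP}}\subseteq RW$.} Let $\W\in\overline{\text{TP}}$ and $X\in\W$. Pick an infinite $\gamma$ with $(\gamma,{}^*\gamma)\in{}^{**}X$. Intersect $X$ with the upper diagonal $\Delta^+$ first; this is harmless, since $\gamma<{}^*\gamma$ gives $(\gamma,{}^*\gamma)\in{}^{**}\Delta^+$. Now Theorem \ref{RamseyNS} yields an infinite $H$ with $[H]^2\subseteq X$. Enumerating $H$ increasingly gives the required sequence, so $\W$ is a Ramsey's witness. This direction is immediate from earlier results.

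\emph{Step 2: $RW\subseteq\overline{\text{TP}}$.} Let $\W$ be a Ramsey's witness and $X\in\W$. Take an increasing $(a_n)$ with $(a_n,a_m)\in X$ for all $n<m$, and set $H=\{a_n\mid n\in\N\}$. Choose any non-principal $\U$ with $H\in\U$. Then $\{n\mid X_n\in\U\}\supseteq H\in\U$, because for $h=a_k\in H$ we have $X_h\supseteq\{a_m\mid m>k\}$, which is cofinite in $H$ and so lies in $\U$. Hence $X\in\U\otimes\U$, and every basic neighborhood of $\W$ meets $\text{TP}$. Nonstandardly, I would take $\gamma\in{}^*H$ infinite, write $\gamma={}^*a_\nu$ with $\nu$ infinite, and note that ${}^*\gamma={}^{**}a_{{}^*\nu}$ where ${}^*\nu>\nu$. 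Transfer of ``$\forall n<m\ (a_n,a_m)\in X$'' twice then gives $(\gamma,{}^*\gamma)\in{}^{**}X$.

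\emph{Main obstacle.} Nothing here is deep. The one delicate point is the bookkeeping in the nonstandard version of Step 2, namely checking that $\gamma<{}^*\gamma$ correspond to indices $\nu<{}^*\nu$ in the doubly extended sequence. The ultrafilter computation sidesteps this, so I would present that argument and then translate it.
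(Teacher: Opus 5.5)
Your proposal is correct and follows essentially the paper's route. The inclusion $RW\subseteq\overline{\text{TP}}$ goes through a non-principal ultrafilter containing $H$ (the paper extends the tails $\{a_m\mid m>n\}$, which amounts to the same), and the nonstandard translation goes through $\UU_\gamma\otimes\UU_\gamma=\UU_{(\gamma,{}^*\gamma)}$ and $\mathfrak{c}$-enlargement; the one cosmetic difference is that the paper proves $\text{TP}\subseteq RW$ by the ultrafilter Ramsey induction and separately checks that $RW$ is closed, whereas you merge these into a single pointwise argument via Theorem \ref{RamseyNS}, which the paper itself notes is the nonstandard form of that same construction.
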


\begin{proof}
Let us see first that $\text{TP}\subseteq\text{RW}$. Let $X\in\U\otimes\U$ where $\U\in\beta\N$ is non-principal.
This means that $\widehat{X}:=\{n\in\N\mid X_n\in\U\}\in\U$, where $X_n:=\{m\in\N\mid (n,m)\in X\}$
is the vertical $n$-fiber of $X$. Pick $a_1\in \widehat{X}$. Then $X_{a_1}\in\U$
and we can pick $a_2\in X_{a_1}\cap\widehat{X}\cap(a_1,+\infty)\in\U$. 
Then $(a_1,a_2)\in X$ and $X_{a_2}\in\U$. 
Pick $a_3\in X_{a_1}\cap X_{a_2}\cap\widehat{X}\cap(a_2,+\infty)\in\U$.
Then $(a_1,a_3)\in X$, $(a_2,a_3)\in X$, and $X_{a_3}\in\U$. By inductively iterating the process,
we find an increasing sequence $(a_n)_{n\in\N}$ such that $(a_n,a_m)\in X$ for all $n<m$,
as desired.\footnote
{This precise argument is used for a well-known ultrafilter proof of Ramsey's Theorem for pairs
(see, \emph{e.g.}, Problem 7.5.1 of \cite{je}), and it corresponds to our nonstandard proof 
of Theorem \ref{RamseyNS}.}

The space $\text{RW}\subseteq\beta\N^2$ is closed. If $\W\notin\text{RW}$
then we can pick a set $X\in\W$ that does for every increasing sequence $(a_n)_{n\in\N}$
there exist $n<m$ such that $(a_n,a_m)\notin X$.
Then the open set $\mathcal{O}_X=\{\mathcal{Z}\in\beta\N^2\mid X\in\mathcal{Z}\}$
is disjoint from $\text{RW}$.\footnote
{This is a particular case of the following general fact:
Suppose $\F\subseteq\PP(I)$ is any nonempty family of subsets of $I$;
then the space of all ultrafilters $\U$ on $I$ such that $\U\subseteq\F$ is closed.
(In our case, $\F\subseteq\PP(\N^2)$ is the family of sets $X$ that satisfy
the ``Ramsey property" stating the existence
of an increasing sequence $(a_n)_{n\in\N}$ such that $(a_n,a_m)\in X$ for all $n<m$.) 
Note that such a space of ultrafilters is nonempty if and only if $\F$ is partition regular,
by Theorem \ref{PRcharacterizations}.}

Let $\mathcal{O}_X$ be a basic open neighborhood of an ultrafilter $\W\in\text{RW}$,
\emph{i.e.}, $X\in\W$. Pick an increasing sequence $(a_n)_{n\in\N}$ such
that $(a_n,a_m)\in X$ for all $n\in\N$. The family $\F:=\{A_n\mid n\in\N\}$
where $A_n:=\{a_m\mid m>n\}$ has the finite intersection property, and so 
it can be extended to an ultrafilter $\U$ on $\N$. Clearly $\U$ is non-principal
because $\bigcap_{n\in\N}A_n=\emptyset$. Finally, we observe that
$X\in\U\otimes\U$, and hence $\U\otimes\U\in\mathcal{O}_A\cap\text{TP}\ne\emptyset$.
To see this, note that $X_{a_n}\in\U$ for every $n\in\N$, 
since $A_n\subseteq X_{a_n}$; then $\widehat{X}=\{k\in\N\mid X_k\in\U\}\in\U$,
since $\{a_n\mid n\in\N\}\subseteq \widehat{X}$.

The nonstandard characterization directly follows by recalling that
$(\alpha,\beta)\in\hN^2$ is a Ramsey pair if and only if the generated ultrafilter $\UU_{(\alpha,\beta)}$
is a Ramsey's witness, and by observing that
$X\in\U\otimes\U$ belongs to a tensor power of a non-principal ultrafilter $\U=\UU_\gamma$
if and only if $(\gamma,{}^*\gamma)\in{}^{**}X$ for a suitable infinite $\gamma\in\hN$.
\end{proof}

\subsection{Positive examples}

We already mentioned that, as a corollary of Hindman's Theorem for semigroups,
the sum operation $\text{Sum}:(n,m)\mapsto n+m$, 
the product operation $\text{Prod}:(n,m)\mapsto n\cdot m$, along with all
other associative operations on $\N$, have the Ramsey property.
Below are two examples of non-associative operations that also satisfy that property.

\begin{theorem}\label{differenceRamsey}
The difference function $\text{Diff}:(n,m)\mapsto m-n$ (defined for $n<m$) 
has the Ramsey property, \emph{i.e.},
for every finite coloring there exists an increasing sequence $(a_n)_{n\in\N}$ such
that elements $a_n, a_m-a_n$ are monochromatic for all $n<m$.
\end{theorem}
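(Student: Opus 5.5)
By Theorem \ref{characterizationsRamseyproperty}, it is enough to produce a Ramsey pair $(\alpha,\beta)\in\hN^2$ with $\beta-\alpha\ueq\alpha$. By Proposition \ref{Ramseyclosuretensors}, it is also enough to produce a set of candidates inside the closure of the tensor powers. The natural way to get Ramsey pairs is through a second level of infinity: for any infinite $\gamma$ the pair $(\gamma,{}^*\gamma)$, i.e.\ the ultrafilter $\UU_\gamma\otimes\UU_\gamma$, is Ramsey by Theorem \ref{RamseyNS}. But ${}^*\gamma-\gamma\ueq\gamma$ is not the kind of relation idempotents give directly. So I will instead take pairs of the form
$$\alpha:=\gamma,\qquad \beta:=\gamma+{}^*\gamma$$
with $\gamma$ additively idempotent, lifted to ${}^{**}\N$ as appropriate.

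The first step is the arithmetic. We have $\beta-\alpha={}^*\gamma\ueq\gamma=\alpha$, and also $\beta=\gamma+{}^*\gamma\ueq\gamma$, which is consistent with Proposition \ref{Ramseypairs}(3). Strictly, the pair lives in $\hN\times{}^{**}\N$, so we should read the Ramsey-pair condition as $(\alpha,\beta)\in{}^{**}X$, using the extension of Theorem \ref{characterizationsRamseyproperty} to levels (one can also rephrase it with ultrafilters: $\W=\text{Sum}'(\U\otimes\U)$, the image of $\U\otimes\U$ under $(n,m)\mapsto(n,n+m)$).

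The second step, which I expect to be the main obstacle, is to check that $(\gamma,\gamma+{}^*\gamma)$ is a Ramsey pair. Suppose $(\gamma,\gamma+{}^*\gamma)\in{}^{**}X$. I will build the sequence in Hindman style, as in the proof of Theorem \ref{hindman}. Inductively choose $b_1<b_2<\ldots$ so that, writing $a_n:=b_1+\dots+b_n$, we have $(a_n,a_m)\in X$ for $n<m$. We also carry the hypothesis that $(a_n,a_n+\gamma)\in\hX$ and $(a_n+\gamma,a_n+\gamma+{}^*\gamma)\in{}^{**}X$. These extra conditions are what let the next step be chosen. The point is that the set
$$\{x: (a_n,a_n+x)\in X\}\cap\{x:(a_n+x,a_n+x+\gamma)\in\hX\}$$
must contain $\gamma$ in its hyper-extension. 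Since $\gamma\ueq\gamma+{}^*\gamma$, this is derived from the idempotence of $\gamma$ exactly as in Hindman's proof.

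Concretely, at stage $n$ the finitely many conditions involving the previous $a_i$ together with the "future" condition form an elementary statement satisfied by $x=\gamma$ (after shifting levels via $\gamma\ueq\gamma+{}^*\gamma$). Backward transfer then gives a finite $b_{n+1}$ larger than all earlier terms. The delicate bookkeeping is to verify that the invariant is preserved; this uses the fact that $\gamma+{}^*\gamma\ueq\gamma$ propagates along the tower ${}^{**}\gamma,\ldots$, and it mirrors the ultrafilter proof that $\{(n,n+m)\}$-images of $\U\otimes\U$ are Ramsey witnesses when $\U=\U\oplus\U$. One might hope to avoid this bookkeeping by citing Proposition \ref{Ramseyclosuretensors}, but that seems circular.

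Finally, given a coloring, I take the color $C$ with $\gamma\in{}^*C$. Then $X:=\{(n,m):n<m,\ n\in C,\ m-n\in C\}$ contains the pair, and the resulting sequence $a_n$ satisfies $a_n,\ a_m-a_n\in C$ for all $n<m$. This proves the theorem.
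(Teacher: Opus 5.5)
Your route (an idempotent $\gamma$ and the pair $(\gamma,\gamma+{}^*\gamma)$) can be made to work, but the key second step is not correct as written.

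\textbf{The gap: your invariant is too weak.} The invariant you carry, $(a_n,a_n+\gamma)\in\hX$ and $(a_n+\gamma,a_n+\gamma+{}^*\gamma)\in{}^{**}X$, involves only $a_n$. So nothing forces $(a_i,a_{n+1})\in X$ for $i<n$. Moreover, choosing $b_{n+1}$ in your displayed set does not restore the second condition at stage $n+1$.

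\textbf{The repair.} Put $\Lambda:=\{y\in\N\mid (y,y+\gamma)\in\hX\}$, so that ${}^*\Lambda=\{y\in\hN\mid (y,y+{}^*\gamma)\in{}^{**}X\}$. Carry $(a_i,a_n+\gamma)\in\hX$ for \emph{all} $i\le n$, together with $a_n+\gamma\in{}^*\Lambda$. Then pick $b_{n+1}$ in
$$S_n:=\{x\in\N\mid (a_i,a_n+x)\in X\ \text{and}\ (a_i,a_n+x+\gamma)\in\hX\ \text{for all}\ i\le n,\ a_n+x\in\Lambda,\ a_n+x+\gamma\in{}^*\Lambda\}.$$
One has $\gamma\in{}^*S_n$:
\begin{itemize}
\item At $x=\gamma$, the first and third conditions are exactly the invariant.
\item The second and fourth follow by applying $\gamma\ueq\gamma+{}^*\gamma$ to the sets $\{x\mid (a_i,a_n+x)\in X\}$ and $\Lambda-a_n$.
\end{itemize}
For the base case, take $a_1\in\Lambda\cap\{x\mid x+\gamma\in{}^*\Lambda\}$. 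This set is nonempty because its hyper-extension contains $\gamma$: $\gamma\in{}^*\Lambda$ is exactly the hypothesis $(\gamma,\gamma+{}^*\gamma)\in{}^{**}X$, and $\gamma+{}^*\gamma\in{}^{**}\Lambda$ then follows by idempotence. With this invariant the induction closes.

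Your last paragraph is fine. It only uses $\gamma\in{}^*C$ and ${}^*\gamma\in{}^{**}C$, so Theorem \ref{characterizationsRamseyproperty} and the relation $\beta-\alpha\ueq\alpha$ are not actually needed.

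\textbf{Comparison with the paper.} Once repaired, your argument is a disguised proof of Hindman's Theorem for consecutive block sums, since $(a_i,a_j)\in X$ just says $a_i\in C$ and $b_{i+1}+\cdots+b_j\in C$. It therefore amounts to the paper's first proof, where Theorem \ref{hindman} is simply quoted and one sets $a_n:=x_1+\cdots+x_n$.

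The paper's nonstandard proof is genuinely lighter, because it needs no idempotent. It takes $(\alpha,\beta)=({}^*\nu-\nu,{}^{**}\nu-\nu)$ for an arbitrary infinite $\nu$, so that $\beta-\alpha={}^*\alpha\ueq\alpha$. Ramsey-ness then follows in one line from Proposition \ref{Ramseyclosuretensors}: $({}^*\nu-\nu,{}^{**}\nu-\nu)\in{}^{***}X$ yields a finite $n_0$ with $(\nu-n_0,{}^*\nu-n_0)\in{}^{**}X$, which is a pair of the form $(\delta,{}^*\delta)$.

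This is also why your worry that Proposition \ref{Ramseyclosuretensors} would be circular is misplaced. It is proved independently; it simply gives nothing cheap for $(\gamma,\gamma+{}^*\gamma)$. The point of the paper's choice of pair is precisely that it is a finite shift of a tensor pair $(\delta,{}^*\delta)$.
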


\begin{proof}
This is a direct consequence of Hindman's Theorem.
Indeed, pick an increasing sequence $(x_n)_{n\in\N}$ such that
the set of finite sums $\text{FS}(x_n)_{n=1}^\infty$ is monochromatic, and let
$a_n:=\sum_{i=1}^n x_i$. Then clearly the increasing
sequence $(a_n)_{n\in\N}$ has the desired property.
\end{proof}

Below, we present an alternative proof that requires neither idempotent ultrafilters 
nor Hindman's theorem, but relies solely on any non-principal ultrafilter 
(or any infinite hypernatural number).
We present both the nonstandard proof and the one based on ultrafilters.

\begin{proof}[Nonstandard proof]
Pick any infinite $\nu\in\hN$, and let $(\alpha,\beta):=({}^{*}\nu-\nu, {}^{**}\nu-\nu)$.
Note that $\text{Diff}(\alpha,\beta)={}^{**}\nu-{}^*\nu={}^*({}^*\nu-\nu)\ueq{}^*\nu-\nu=\alpha$.
To see that that $(\alpha,\beta)$ is a Ramsey pair, 
we use the characterization of Proposition \ref{Ramseyclosuretensors}.
If $({}^{*}\nu-\nu, {}^{**}\nu-\nu)\in{}^{***}X$,
then $\nu\in{}^*\{n\in\N\mid (\nu-n,{}^*\nu-n)\in{}^{**}X\}$, and so we can pick $n_0\in\N$ such
that $(\nu-n_0,{}^*\nu-n_0)\in{}^{**}X$. 
Finally, note that $(\nu-n_0,{}^*\nu-n_0)=(\gamma,{}^*\gamma)$ where
$\gamma:=\nu-n_0$ is infinite.
\end{proof}

\begin{proof}[Ultrafilter proof]
We will use the characterization of Theorem \ref{characterizationsRamseyproperty},
and show that there exists a Ramsey's witness $\W$ such that $\text{Diff}(\W)=\pi_1(\W)$.

Let $f:\N^3\to\N^2$ be the function where $(x,y,z)\mapsto (y-x, z-x)$ if $x<y$ and $x<z$, 
and $f(x,y,z)=1$ otherwise.
Pick any non-principal ultrafilter $\U$ on $\N$,
and let $\W:=f(\U\otimes\U\otimes\U)$ be the image ultrafilter of the triple tensor power of $\U$
under the function $f$. 

We show that $\W$ belongs to the closure of the 
space of non-principal tensor powers $\text{TP}$, and hence it is a Ramsey's witness.
To this end, take any element $X\in\W$. Then $f^{-1}(X)=\{(n,m,k)\in\N^3\mid (m-n,k-n)\in X\}\in\U\otimes\U\otimes\U$,
\emph{i.e.}, $\Lambda:=\{n\in\N\mid\{(m,k)\mid (m-n,k-n)\in X\}\in\U\otimes\U\}\in\U$.
Pick $n_0\in\Lambda$; then $X-(n_0,n_0)=\{(m,k)\mid(m-n_0,k-n_0)\in X\}\in\U\otimes\U$,
and hence $X\in\V\otimes\V$ where $\V:=n_0\oplus\U$ is non-principal.

We are left to show that the image ultrafilters $\text{Diff}(\W)=\pi_1(\W)$.
This is proven by the following properties, 
which are one equivalent to the next for any given $A\subseteq\N$:
\begin{itemize}
\item
$A\in\text{Diff}(\W)=\text{Diff}(f(\U\otimes\U\otimes\U))$.
\item
$f^{-1}(\text{Diff}^{-1}(A))=
\{(n,m,k)\in\N^3\mid k-m\in A\}=
\N\times\text{Diff}^{-1}(A)\in\U\otimes\U\otimes\U$.
\item
$\text{Diff}^{-1}(A)\in\U\otimes\U$.
\item
$\text{Diff}^{-1}(A)\times\N=\{(n,m,k)\in\N^3\mid m-n\in A\}=f^{-1}(\pi_1^{-1}(A))\in\U\otimes\U\otimes\U$.
\item
$A\in\pi_1(f(\U\otimes\U\otimes\U))=\pi_1(\W)$.
\end{itemize}
\end{proof}

\begin{theorem}\label{ourexponentiation}
The exponential function $\text{Exp}:(n,m)\mapsto m^n$ has the Ramsey property,
\emph{i.e.}, for every finite coloring there exists an increasing sequence $(a_n)_{n\in\N}$ such
that elements $a_n, (a_m)^{a_n}$ are monochromatic for all $n<m$.
\end{theorem}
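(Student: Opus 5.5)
The plan is to use the characterization in Theorem \ref{characterizationsRamseyproperty}: it is enough to find a Ramsey pair $(\xi,\eta)$ with $\text{Exp}(\xi,\eta)=\eta^\xi\ueq\xi$, or, equivalently, a Ramsey's witness $\W$ with $\text{Exp}(\W)=\pi_1(\W)$. The easiest Ramsey pairs to recognize are pairs of the form $(\gamma,{}^*\gamma)$ with $\gamma$ infinite. Such a pair generates the tensor power $\UU_\gamma\otimes\UU_\gamma$, and Proposition \ref{Ramseyclosuretensors} (already its first step, $\text{TP}\subseteq\text{RW}$) shows it is a Ramsey's witness. So I will look for an infinite $\xi$ with $({}^*\xi)^\xi\ueq\xi$.

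Such a $\xi$ is already built in the proof of Theorem \ref{exponential}, and I will reuse it. By the multiplicative Brauer Theorem, take $\alpha,\delta$ with $\alpha\ueq\delta\ueq\alpha\cdot\delta^\ell$ for all $\ell\in\N$. By overspill/transfer we get ${}^*\alpha\ueq{}^*\alpha\cdot({}^*\delta)^\alpha$. Set $\xi:=({}^*\delta)^\alpha$ and $\eta:={}^*\xi=({}^{**}\delta)^{{}^*\alpha}$. That proof then gives
$$\eta^\xi=({}^{**}\delta)^{{}^*\alpha\cdot({}^*\delta)^\alpha}\ueq({}^{**}\delta)^{{}^*\alpha}=\eta\ueq\xi.$$
Here the middle $u$-equivalence is obtained by applying the function $x\mapsto ({}^{**}\delta)^x$ at the next level of infinity, in the style of the theorem on sums at different levels. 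Moreover $\xi$ is infinite, since $\delta$ is infinite (it is $u$-equivalent to $\alpha\cdot\delta$, hence not finite by Theorem \ref{u-equivalenceproperties}(1),(5)).

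It remains to certify that $(\xi,{}^*\xi)$ is a Ramsey pair. The only subtlety is that $\xi\in{}^{**}\N$ rather than $\hN$, so the results are formally stated one level too low. I will argue in ultrafilter terms to avoid this. Let $\U:=\UU_\xi$, which is non-principal. The computation in the proposition identifying $\UU_\nu\otimes\UU_\mu$ with $\UU_{(\nu,{}^*\mu)}$ goes through verbatim for points of iterated extensions, so $\W:=\UU_{(\xi,{}^*\xi)}=\U\otimes\U\in\text{TP}\subseteq\text{RW}$. Then $\text{Exp}(\W)=\UU_{\eta^\xi}=\UU_\xi=\pi_1(\W)$. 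Theorem \ref{characterizationsRamseyproperty} (first group, $(3)\Rightarrow(1)$) then yields the Ramsey property of $\text{Exp}$.

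As an alternative to the ultrafilter step, one could transfer Theorem \ref{RamseyNS} one level up to handle $(\xi,{}^*\xi)\in{}^{***}X$ directly.

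The main obstacle is the key $u$-equivalence $({}^*\xi)^\xi\ueq\xi$. Its justification requires care about which level each number lives at: the exponent ${}^*\alpha\cdot({}^*\delta)^\alpha$ lives in ${}^{**}\N$, while the base ${}^{**}\delta$ lives one level higher, and this is exactly what allows the $u$-equivalence to be pushed through the map $x\mapsto ({}^{**}\delta)^x$. Everything else is bookkeeping.
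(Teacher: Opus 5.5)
\textbf{There is a genuine gap: the step that fails is the certification that $(\xi,{}^*\xi)$ is a Ramsey pair.} Since $\xi=({}^*\delta)^\alpha$ lives in ${}^{**}\N$, its generated ultrafilter is $\UU_\xi=\{A\subseteq\N\mid \xi\in{}^{**}A\}$. Redoing the tensor-product computation with the correct number of stars gives
\[
X\in\UU_\xi\otimes\UU_\xi\iff\{n\in\N\mid (n,\xi)\in{}^{**}X\}\in\UU_\xi\iff(\xi,{}^{**}\xi)\in{}^{****}X .
\]
So the tensor square is generated by $(\xi,{}^{**}\xi)$. The second coordinate must be pushed up by as many stars as the level of $\xi$, not by one.

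The pair $(\xi,{}^*\xi)$ is genuinely different. Here $\xi$ and ${}^*\xi$ overlap in a level: ${}^*\delta$ occurs in $\xi$ and ${}^*\alpha$ occurs in ${}^*\xi$. Such pairs need not be Ramsey pairs at all. For instance, take $\mu\in\hN$ infinite and $\xi=2^\mu+2^{{}^*\mu}$, and let $X=\{(2^a+2^b,2^b+2^c)\mid a<b<c\}$. Then $(\xi,{}^*\xi)\in{}^{***}X$, because $\mu<{}^*\mu<{}^{**}\mu$. Yet no $x_1,x_2,x_3$ satisfy $(x_k,x_l)\in X$ for all $k<l$. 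Writing each $x_k$ as a sum of two distinct powers of $2$, the smaller exponent of $x_3$ would have to equal both the larger exponent of $x_1$ (which is the smaller exponent of $x_2$) and the larger exponent of $x_2$.

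Your fallback of transferring Theorem \ref{RamseyNS} one level up does not help. That theorem is not an elementary statement, so it cannot be transferred. Re-running its proof for $\xi\in{}^{**}\N$ needs the hypothesis $(\xi,{}^{**}\xi)\in{}^{****}X$ again.

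With the correct pair you would need $({}^{**}\xi)^\xi\ueq\xi$, that is, $({}^{***}\delta)^{{}^{**}\alpha\cdot({}^*\delta)^\alpha}\ueq({}^{***}\delta)^{{}^{**}\alpha}$. The Brauer relation ${}^*\alpha\ueq{}^*\alpha\cdot({}^*\delta)^\nu$ is of no use here. It only lets powers of ${}^*\delta$ be absorbed by ${}^*\alpha$ at the same level, whereas now a lower-level factor multiplies ${}^{**}\alpha$.

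The computation in the proof of Theorem \ref{exponential} only shows that $\xi$, ${}^*\xi$ and $({}^*\xi)^\xi$ are $u$-equivalent. That is exactly enough for the finite triple $x,y,y^x$, but it says nothing about Ramsey pairs. To carry out your strategy you would need, for example, a non-principal $\U$ with $\text{Exp}(\U\otimes\U)=\U$. Since $\text{Exp}$ is not associative, Ellis' Lemma does not provide one, and a single Brauer pair does not evidently yield one.

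The paper does not go this way. It obtains the theorem as an immediate corollary of the Hindman-type result for exponential towers quoted in its proof: the towers of height $1$ and $2$ with $m>n$ are exactly $a_n$ and $(a_m)^{a_n}$.

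There is also a minor slip. Items (1) and (5) of Theorem \ref{u-equivalenceproperties} do not force $\delta$ to be infinite, since $\alpha=\delta=1$ satisfies the Brauer relations. You need the nontrivial configurations with $d\ge 2$.
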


\begin{proof}
This is a corollary of the following Hindman-like result for exponentiation 
that was proved in \cite{dra}:
\begin{itemize}
\item
\emph{For every finite coloring of $\N$ there exists an increasing sequence
$(a_n)_{n\in\N}$ such that the following towers of exponentiations 
are monochromatic for all $n_1>\ldots>n_k$.\footnote
{In fact, all possible finite exponentiations where parentheses are put in any meaningful way
belong to the same color (see \cite{dra} for details.)}}
$${a_{n_1}}^ {  {a_{n_2}}^ {  {a_{n_3}}^ {\iddots^  {a_{n_k}} } } }$$
\end{itemize}
\end{proof}

Let us now turn to pairs of functions.
We will see below that the pair given by the sum and product functions does not satisfy the 
weak Ramsey property where the elements of the sequence are not assumed 
to be of the same color (see Theorem \ref{Hindmansumproducts}) . However, quite surpringly, 
the pair given by sum and division has the Ramsey property.

\begin{theorem}
Let $\text{Div}:\N^2\to\N$ be the function where $(n,m)\mapsto\frac{m}{n}$ if $m$ is a multiple of $n$,
and $\text{Div}(n,m)=1$ otherwise. Then the pair $(\text{Sum},\text{Div})$ 
satisfies the Ramsey property, \emph{i.e.}, for every finite coloring of $\N$ there
exists an increasing sequence $(a_n)_{n\in\N}$ such that
element $a_n, a_n+a_m, \frac{a_m}{a_n}$ are monochromatic for all $n<m$.\footnote
{Clearly, in particular $a_m$ is a multiple of $a_n$ for all $n<m$.}
\end{theorem}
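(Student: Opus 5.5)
The plan is to use the characterization in Theorem \ref{characterizationsRamseyproperty}, second group, item (2). I need a Ramsey pair $(\xi,\eta)$ with ${}^*\text{Sum}(\xi,\eta)\ueq{}^*\text{Div}(\xi,\eta)\ueq\xi$. The natural candidate is the pair already used in the proof of Theorem \ref{sumsquotients}. Fix an idempotent $\gamma\in\hN$ and set
$$\xi:=\frac{{}^*\gamma}{\gamma},\qquad \eta:=\frac{{}^{**}\gamma}{\gamma},$$
regarded as a pair in ${}^{***}\N^2$. Iterated hyper-extensions are harmless here, exactly as in the proof of Theorem \ref{differenceRamsey}: the notions of Ramsey pair and $u$-equivalence extend verbatim to higher levels.

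The first step is the $u$-equivalences, which were essentially verified in the proof of Theorem \ref{sumsquotients}. Since $\gamma$ is a multiple of every $n\in\N$, transfer shows that ${}^*\gamma$ is a multiple of every element of $\hN$ and ${}^{**}\gamma$ is a multiple of every element of ${}^{**}\N$. Hence $\xi,\eta$ are genuine hypernatural numbers. Moreover $\eta$ is a multiple of $\xi$, because $\eta/\xi={}^{**}\gamma/{}^*\gamma={}^*\xi$.

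It follows that ${}^{***}\text{Div}(\xi,\eta)=\eta/\xi={}^*\xi\ueq\xi$. Idempotency of $\gamma$ gives ${}^*\text{Sum}(\xi,\eta)=({}^*\gamma+{}^{**}\gamma)/\gamma={}^*(\gamma+{}^*\gamma)/\gamma\ueq{}^*\gamma/\gamma=\xi$. To justify this last step, I will apply the general fact that $u$-equivalence is preserved under $f(\alpha_0,{}^*\alpha_1,\ldots)$, using the function $(x,y)\mapsto y/x$ with $\alpha_0=\gamma$, and comparing ${}^*\gamma$ with ${}^*(\gamma+{}^*\gamma)$.

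The second step, which I expect to be the only real point, is showing that $(\xi,\eta)$ is a Ramsey pair. I will use the nonstandard criterion of Proposition \ref{Ramseyclosuretensors}, mimicking the nonstandard proof of Theorem \ref{differenceRamsey}.

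Suppose $(\xi,\eta)\in{}^{***}X$. Consider the set
$$\Lambda:=\{n\in\N\mid (\text{Div}(n,\gamma),\text{Div}(n,{}^*\gamma))\in{}^{**}X\}.$$
By the coherence proposition, its hyper-extension is
$${}^*\Lambda=\{x\in\hN\mid({}^*\text{Div}(x,{}^*\gamma),{}^*\text{Div}(x,{}^{**}\gamma))\in{}^{***}X\}.$$
Since $\gamma$ divides ${}^*\gamma$ and ${}^{**}\gamma$, the hypothesis says exactly that $\gamma\in{}^*\Lambda$, so $\Lambda\ne\emptyset$.

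Now pick $n_0\in\Lambda$. Since $\gamma$ is divisible by $n_0$, the number $\delta:=\gamma/n_0\in\hN$ is infinite. By transfer ${}^*\delta={}^*\gamma/n_0$, so the membership $n_0\in\Lambda$ reads $(\delta,{}^*\delta)\in{}^{**}X$, as required. The care needed here is in handling the piecewise definition of $\text{Div}$: all the relevant divisibilities ($n_0\mid\gamma$, $\gamma\mid{}^*\gamma$, $\gamma\mid{}^{**}\gamma$) hold, so $\text{Div}$ agrees with true division throughout.

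Combining the two steps with Theorem \ref{characterizationsRamseyproperty} yields the Ramsey property for $(\text{Sum},\text{Div})$. Note also that $\text{Div}(a_n,a_m)$ can only equal $1$ finitely often in color-consistent ways. In any case, the sequence produced sits inside the hyper-extension of $\{(n,m)\mid n\mid m\}$, which contains $(\xi,\eta)$, so $a_n\mid a_m$ for all $n<m$.
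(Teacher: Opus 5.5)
Your proof is correct, and it takes a different route from the paper. The paper does not construct a Ramsey pair here. It obtains the theorem as an immediate corollary of the infinitary sums-and-quotients extension of Hindman's Theorem from \cite{dlmrv}, which makes all $a_{n_1}+\cdots+a_{n_\ell}$ and $\frac{a_{n_{\ell+1}}+\cdots+a_{n_k}}{a_{n_1}+\cdots+a_{n_\ell}}$ monochromatic, and then specializes to single terms and pairs. You instead verify condition (2) of Theorem \ref{characterizationsRamseyproperty} directly. The pair $(\xi,\eta)=({}^*\gamma/\gamma,{}^{**}\gamma/\gamma)$ from Theorem \ref{sumsquotients} already satisfies $\xi+\eta\ueq\xi$ and $\text{Div}(\xi,\eta)={}^*\xi\ueq\xi$. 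Your Ramsey-pair check is the multiplicative analogue of the alternative nonstandard proof of Theorem \ref{differenceRamsey}: you divide out a standard $n_0\mid\gamma$ where that proof subtracts $n_0$, and you finish with Theorem \ref{RamseyNS}. What your route buys is a self-contained argument inside the paper's own toolkit, showing that a single additive idempotent suffices. What the paper's route buys is a much stronger Hindman-type configuration, which a Ramsey-pair argument alone does not give. One small caveat: your step $\xi+\eta\ueq\xi$ applies the preservation-of-$u$-equivalence fact with $\beta_1=\gamma+{}^*\gamma\in{}^{**}\N$ and with $y/x$ in place of a total function. This goes slightly beyond that fact's stated form, exactly as in Theorem \ref{sumsquotients}, and the same fiber argument proves it.

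Your last paragraph needs cleaning up. The claim that $\text{Div}(a_n,a_m)$ ``can only equal $1$ finitely often'' is unjustified, and it is false in general. For the one-color coloring, any increasing sequence of pairwise non-dividing numbers satisfies $a_n,\ a_n+a_m,\ \text{Div}(a_n,a_m)\in C$. To get divisibility in the ``i.e.'' formulation, apply the definition of Ramsey pair directly to $X\cap D$, where $X=\{(n,m)\mid n,\ n+m,\ \text{Div}(n,m)\in C\}$ and $D=\{(n,m)\mid n\mid m\}$. This is legitimate because $\xi\mid\eta$ gives $(\xi,\eta)\in{}^{***}D$. The resulting pairs $(a_n,a_m)$ then lie in $D$ itself; the sequence does not ``sit inside'' ${}^{***}D$, as you wrote.
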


\begin{proof}
This is a straight corollary of the following recent extension of Hindman's Theorem, proved in \cite{dlmrv}:
\begin{itemize}
\item
\emph{For every finite coloring of $\N$ there exists an increasing sequence
$(a_n)_{n\in\N}$ such that the following elements
are monochromatic for all $n_1<\ldots<n_\ell<n_{\ell+1}<\ldots<n_k$:}
$$a_{n_1}+\ldots+a_{n_\ell},\ \frac{a_{n_{\ell+1}}+\ldots+a_{n_k}}{a_{n_1}+\ldots+a_{n_\ell}}.$$
\end{itemize}
\end{proof}

\subsection{Negative examples}

The following negative result proved by N. Hindman in \cite{hi}
shows the impossibility of infinite monochromatic patterns that combine sums and products
originated by a same sequence.

\begin{theorem}\label{Hindmansumproducts}
The pair of functions $(\text{Sum}, \text{Prod})$ does not satisfy the weak Ramsey property.
That is, there exists a finite coloring of $\N$ such that no increasing sequence $(a_n)_{n\in\N}$
has the property that all elements $a_n+a_m, a_n\cdot a_m$ for $n<m$ are monochromatic.
\end{theorem}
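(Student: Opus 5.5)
The plan is to argue by contradiction through the nonstandard characterization of Theorem \ref{characterizationsRamseyproperty}. If $(\text{Sum},\text{Prod})$ had the weak Ramsey property, there would be a Ramsey pair $(\alpha,\beta)\in\hN^2$ with $\alpha+\beta\ueq\alpha\cdot\beta$. By Proposition \ref{Ramseypairs} we also know that $\alpha<\beta$, that both are infinite, and that $\alpha\ueq\beta$. The first thing I would establish is the announced strengthening (Proposition \ref{Ramseypairproperty}): $\beta$ is ``way larger'' than $\alpha$, meaning $\beta>\hf(\alpha)$ for every $f:\N\to\N$. I would prove it by applying the Ramsey pair property to $X:=\{(n,m)\mid m\le f(n)\}$: an increasing sequence with $a_m\le f(a_n)$ for all $n<m$ is impossible, so $(\alpha,\beta)\notin\hX$. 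In particular $\beta>2^{2^{\alpha}}$, so in $\alpha+\beta$ the summand $\alpha$ is negligible in a very strong sense.

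Next I would extract invariants from $\alpha+\beta\ueq\alpha\beta$ via item (3) of Theorem \ref{u-equivalenceproperties}. For every $g:\N\to\N$ this gives ${}^*g(\alpha+\beta)\ueq{}^*g(\alpha\beta)$, and by item (4) the two values are equal whenever $g$ has finite range. I would use the 2-adic valuation $t(n)$ and the binary length $L(n)=\lfloor\log_2 n\rfloor$. Because $\beta$ is way larger than $\alpha$, we have $L(\alpha+\beta)\in\{L(\beta),L(\beta)+1\}$, while $L(\alpha\beta)\in\{L(\alpha)+L(\beta),L(\alpha)+L(\beta)+1\}$. For the valuation, $t(\alpha\beta)=t(\alpha)+t(\beta)$, whereas $t(\alpha+\beta)=\min(t(\alpha),t(\beta))$ when $t(\alpha)\ne t(\beta)$. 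I would split into cases according to whether $t(\alpha)=t(\beta)$ or $t(\alpha)<t(\beta)$ or $t(\alpha)>t(\beta)$. By $\alpha\ueq\beta$ and item (2), in the unequal cases the two valuations are at infinite distance, and, using $\beta\gg\alpha$ together with a ``way larger'' argument applied to $(t(\alpha),t(\beta))$, even at ``way larger'' distance.

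The core of the proof is to find a finite-range coloring $c$ with $c(\alpha+\beta)\ne c(\alpha\beta)$ in every case, contradicting item (4). The first colorings I would try compare the relative sizes of $t(n)$, $L(n)-t(n)$ and $L(n)$: for example, the parity of $\lfloor\log_2 L(n)\rfloor$, or the parity of $\lfloor\log_2(L(n)-t(n))\rfloor$ (the length of the block between the lowest and highest set bits), combined with the parity of $t(n)$ and of $\lfloor \log_2 t(n)\rfloor$.

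The mechanism I expect is as follows. In $\alpha+\beta$ the structural data are essentially those of $\beta$, with the low-order data of $\alpha$ spliced in. In $\alpha\beta$, by contrast, the length and valuation are \emph{sums} of the corresponding data of $\alpha$ and $\beta$. Since $\alpha\ueq\beta$, the quantities $L(\alpha),L(\beta)$ have the same coloring values, and so do $t(\alpha),t(\beta)$. Additivity then shifts the ``doubly logarithmic'' coloring predictably: for instance, if $t(\alpha)=t(\beta)=s$, then $t(\alpha\beta)=2s$, and $\lfloor\log_2\rfloor$ of it increases by exactly $1$, which flips parity. In the sum, no such shift happens.

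The main obstacle is making this case analysis airtight. The case $t(\alpha)=t(\beta)$ is where carries occur in $\alpha+\beta$, so $t(\alpha+\beta)>t(\alpha)$ is possible and uncontrolled. There I would first try a coloring reading the bit pattern just above position $t(n)$ (e.g.\ the two bits at positions $t(n)+1,t(n)+2$), exploiting that $\alpha/2^s$ and $\beta/2^s$ are odd and $u$-equivalent, so their residues mod $8$ agree. This pins down $t(\alpha+\beta)=s+1$ or forces a controlled further carry, and then compares with $t(\alpha\beta)=2s$ via parity of $\lfloor\log_2 t\rfloor$. If a single coloring does not handle all cases at once, I would take the product of the finitely many colorings used in the separate cases, which still has finite range.
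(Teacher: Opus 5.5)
Your reduction is correct: a Ramsey pair $(\alpha,\beta)$ with $\alpha+\beta\ueq\alpha\beta$ would exist, and your proof that $\beta>{}^*f(\alpha)$ for every $f$ is correct. But the proposal stops where the proof has to begin. What is needed is, in effect, one finite coloring $c$ with $c(\alpha+\beta)\neq c(\alpha\beta)$ for every Ramsey pair. Equivalently, by Proposition~\ref{Ramseyclosuretensors} and Theorem~\ref{RamseyNS}, one needs $c(\gamma+{}^*\gamma)\neq c(\gamma\cdot{}^*\gamma)$ for every infinite $\gamma\in\hN$. You only list colorings you ``would try''. That coloring is the whole content of the theorem. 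The paper gives no proof of its own here; it cites Hindman's original paper, whose argument is exactly such an explicit and fairly delicate coloring via binary expansions.

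Your case analysis also aims at the wrong case. The case $t(\alpha)>t(\beta)$ is empty by Proposition~\ref{Ramseypairproperty}. The case $t(\alpha)=t(\beta)=s$ is the easy one and has no uncontrolled carries. The odd parts of $\alpha,\beta$ are $u$-equivalent, hence congruent mod $4$, so $t(\alpha+\beta)=s+1$ exactly. Then $s+1\ueq 2s$ forces $s=1$ by Theorem~\ref{u-equivalenceproperties}(1),(6); your parity of $\lfloor\log_2 t\rfloor$ would fail whenever $s+1$ is a power of $2$. At $s=1$ no coloring of $t$ helps, and fixed windows of bits above $t(n)$ only force the odd part of $\alpha$ to be $\equiv1$ modulo every $2^k$, after which they agree. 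What does work is $D(n):=t\bigl(n/2^{t(n)}-1\bigr)$: one computes $D(\alpha\beta)=D(\alpha+\beta)+1$, contradicting item (5).

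The real difficulty is $t(\alpha)<t(\beta)$, and your proposal does not handle it. There $t(\beta)>{}^*f(\alpha)$ for all $f$, so $\alpha+\beta$ is just $\alpha$ and $\beta$ written side by side in binary with an enormous run of zeros between them. The data $t$, $L$ then give only $t(\alpha)\ueq t(\alpha)+t(\beta)$ and $L(\beta)\ueq L(\alpha)+L(\beta)+\epsilon$, with $\epsilon\in\{0,1\}$.

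Your parity-flip mechanism needs the two added quantities to be comparable, as in $s+s=2s$. Here $t(\beta)$ and $L(\beta)$ dwarf $t(\alpha)$ and $L(\alpha)$. So $\lfloor\log_2\rfloor$ of $t(\alpha\beta)$ and of $L(\alpha\beta)$ generically equals that of $t(\beta)$ and of $L(\beta)$, and nothing flips.

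In fact no coloring that factors through $t(n)$ alone or through $L(n)$ alone, nor any product of such, can work. Since $\text{Sum}$ has the Ramsey property, there is a Ramsey pair $(x,y)$ with $x\ueq y\ueq x+y$. Then $(2^x,2^y)$ is a Ramsey pair in exactly this case, with $t(2^x+2^y)=x\ueq x+y=t(2^x2^y)$ and $L(2^x+2^y)=y\ueq x+y=L(2^x2^y)$.

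Fixed windows of low- or high-order bits also end up agreeing on $\alpha+\beta$ and $\alpha\beta$. Once $u$-equivalence has forced the odd part of $\alpha$ to be $\equiv1$ modulo every $2^k$ and its leading bits to be $100\cdots$ or $111\cdots$, these windows coincide. So you need a coloring sensitive to finer features of the binary expansion, for instance how blocks of bits sit relative to long runs of zeros. Constructing and verifying it is the missing idea. ``Taking the product of the colorings from the separate cases'' presupposes that this case has already been closed.
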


About exponentiations and products, recall the following result that 
was proved by J. Sahasrabudhe in \cite{sa}:
\begin{itemize}
\item
\emph{For every $\ell\in\N$ and for every finite coloring of $\N$
there exist elements $a_1<\ldots<a_\ell$ such that
the following finite products and finite towers of exponentiation 
for $1\le n_1<\ldots<n_k\le\ell$ are all monochromatic:}
$$a_{n_1}\cdot\ldots\cdot a_{n_k}, \quad {a_{n_1}}^ {  {a_{n_2}}^ {  {a_{n_3}}^ {\iddots^  {a_{n_k}} } } }$$
\end{itemize}

We stress that the above exponentiations are considered in reverse
order with respect to the exponential function $\text{Exp}:(n,m)\mapsto m^n$
as considered in Theorem \ref{ourexponentiation}.
Although there exist arbitrarily large finite monochromatic patterns,
this reverse order prevents the infinitary Ramsey property to be satisfied.

\begin{theorem}
The pair of functions $(\text{Prod},\text{Exp}')$ 
where $(n,m)\mapsto n^m$ does not satisfy the weak Ramsey property.
That is, there exists a finite coloring of $\N$
such that no increasing sequence $(a_n)_{n\in\N}$
has the property that all elements $a_n\cdot a_m, (a_n)^{a_m}$ for $n<m$ are monochromatic.\footnote
{This is Example 5.23 of \cite{dlmrv}.}
\end{theorem}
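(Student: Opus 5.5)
The plan is to produce an explicit coloring by ``iterated logarithmic level'' and then use Ramsey's Theorem for pairs to reduce to a situation where the colors can be compared directly. Nonstandardly, by Theorem \ref{characterizationsRamseyproperty} the claim amounts to showing that no Ramsey pair $(\alpha,\beta)$ satisfies $\alpha\cdot\beta\ueq\alpha^\beta$. The heuristic is that $\alpha^\beta$ is exponentially larger than $\alpha\beta$ whenever $\beta$ is much larger than $\alpha$. I will run the argument in standard terms, since the coloring is concrete.

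\textbf{Coloring.} Let $T(0)=1$ and $T(j+1)=2^{T(j)}$, and let $\ell(x)$ be the unique $j$ with $T(j)\le x<T(j+1)$. This level alone is too coarse near the endpoints $T(j)$, so I would refine it. Set $L_j(x)=\log_2^{(j)}x\in[1,2)$, the $j$-fold iterated logarithm where $j=\ell(x)$. Color $x$ by the pair
\[
\chi(x)=\bigl(\ell(x)\bmod 3,\ \text{the sub-interval of }[1,2)\text{ containing }L_{\ell(x)}(x)\bigr),
\]
where $[1,2)$ is cut into finitely many pieces, say $[1,\tfrac54),[\tfrac54,\tfrac32),\ldots$. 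The exact cut points are to be tuned in the main step.

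\textbf{Reduction.} Suppose $(a_n)$ is increasing with all $a_na_m$ and $a_n^{a_m}$ ($n<m$) of one color. Fix $k=a_1\ge 2$ (discard $a_1=1$ if needed) and also use $k'=a_2$. For every $m>2$ the four numbers $ka_m$, $k^{a_m}$, $k'a_m$, $k'^{a_m}$ then share a color. As $m\to\infty$, multiplying by a fixed $k$ changes $\log_2^{(j)}$ of a huge number by an amount tending to $0$ once $j\ge2$. So $ka_m$ and $a_m$ have the same level and essentially the same refined coordinate, except for $a_m$ lying in a vanishing neighbourhood of a cut point. On the other hand $\log_2(k^{a_m})=a_m\log_2k$. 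Hence $k^{a_m}$ has level $\ell(a_m)+1$, or $\ell(a_m)+2$ when $\log_2 k$ pushes it across a boundary, and its refined coordinate is governed by $L_{\ell(a_m)}(a_m)$ shifted by a quantity depending only on $\log_2^{(i)}k$.

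\textbf{Main obstacle.} The decisive step is to rule out $\ell(k^{a_m})\equiv\ell(ka_m)\pmod 3$ together with equality of refined coordinates for all large $m$. A pure level difference of $1$ or $2$ is never $0\bmod 3$. The danger is that, for $m$ large, $a_m$ sits just below a tower value while $ka_m$ sits just above it (or the analogous thing for $k^{a_m}$), which shifts the two levels unevenly. I would handle this as follows. First pass to a subsequence by Ramsey's Theorem (or pigeonhole) so that all $a_m$ have the same refined color and the same position relative to the cut points. Then use the second base $k'\neq k$: the pairs $(ka_m,k^{a_m})$ and $(k'a_m,k'^{a_m})$ must behave compatibly, and the cut points of $[1,2)$ are chosen so that the shift in the refined coordinate produced by exponentiation lands in a different piece than the one produced by multiplication. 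If a single refinement does not suffice, I would take the product with the coloring $x\mapsto\ell(x)\bmod 3$ applied to $\lfloor\log_2x\rfloor$ as well. This is the step where I expect the bulk of the bookkeeping.
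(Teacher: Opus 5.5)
\textbf{There is a genuine gap, though your route is viable.} Your explicit coloring by tower level and position in $[1,2)$ differs from the paper's argument, and it can be completed. However, the proposal stops exactly at the decisive step, and the mechanism you sketch for that step is not the one that works.

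The refined coordinate of $k^{a_m}$ is not $L_{\ell(a_m)}(a_m)$ shifted by a fixed quantity depending on $\log_2^{(i)}k$. Indeed $\log_2^{(j+1)}(k^{a_m})=\log_2^{(j)}(a_m\log_2 k)$, and multiplying by a constant changes $\log_2^{(j)}$ only by $o(1)$ when $j\ge 2$. So, away from tower boundaries, $ka_m$ and $k^{a_m}$ both have refined coordinate $L_{\ell(a_m)}(a_m)+o(1)$. There the refined coordinate cannot separate them at all; nor does it need to, since their levels differ by one. Tuning the cut points, using a second base $k'$, and passing to a subsequence therefore never touch the real issue, which is the boundary case. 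You leave that case as unspecified bookkeeping with hedged fallbacks, so the key comparison is never actually made.

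\textbf{The missing observation} is the sandwich $a_m\le a_m\log_2k\le ka_m$ for $k\ge 2$. Extend $\ell$ to reals $\ge 1$, and use $\ell(y)=1+\ell(\log_2y)$ for $y\ge2$ together with $\ell(kx)\le\ell(x^2)\le\ell(x)+1$ for $x\ge k$. This gives
\[
\ell(a_m)\ \le\ \ell(k^{a_m})-1=\ell(a_m\log_2k)\ \le\ \ell(ka_m)\ \le\ \ell(a_m)+1,
\]
so $\ell(k^{a_m})-\ell(ka_m)\in\{0,1\}$; a difference of $2$ never occurs.

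The value $0$ arises only if, with $j=\ell(a_m)$, one has $\log_2^{(j)}(ka_m)\ge 2>\log_2^{(j)}(a_m\log_2k)$. For $2\le j\le\ell(a_m)$ we have $0\le\log_2^{(j)}(ka_m)-\log_2^{(j)}(a_m)\le\log_2k/(\ln 2\cdot\log_2a_m)$, because $\log_2$ is a contraction on $[2,\infty)$. Hence in that case $L_{j+1}(ka_m)=\log_2\log_2^{(j)}(ka_m)$ is close to $1$, while $L_{j+1}(k^{a_m})=\log_2^{(j)}(a_m\log_2k)$ is close to $2$. Any partition of $[1,2)$ into at least two intervals then separates them. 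So a single pair $(a_1,a_m)$ with $m$ large already gives the contradiction, and $\ell\bmod 2$ would suffice for the level coordinate.

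\textbf{Comparison with the paper.} Completed this way, your argument yields an explicit coloring and needs no Ramsey pairs. The paper instead takes a Ramsey pair $(\alpha,\beta)$ with $\alpha\beta\ueq\alpha^\beta$. With $\varphi(n)=\lfloor\log_2n\rfloor$ and $\psi(n)=\lfloor\log_2\log_2n\rfloor$, it shows $\psi(\alpha\beta)=\varphi(\psi(\alpha^\beta))+c$ with $c\in\{-1,0,1\}$. It then concludes from Theorem~\ref{u-equivalenceproperties}(5). Its coloring is only implicit, but it avoids all boundary bookkeeping.
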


\begin{proof}
Assume for the sake of contradiction that there exists a Ramsey pair
$(\alpha,\beta)$ such that $\alpha\beta\ueq\alpha^\beta$.
Let $\varphi,\psi:\N\to\N$ be the functions where $\varphi:n\mapsto\lfloor\log_2(n)\rfloor$,
and $\psi:n\mapsto\lfloor\log_2(\log_2(n))\rfloor$.\footnote
{For simplicity, in the following we will omit stars when denoting hyper-extensions of functions.}
Note that 
$\psi(\alpha\beta)=\lfloor\log_2(\log_2(\beta))+\varepsilon\rfloor$ where
$\varepsilon:=\log_2(1+\frac{\log_2(\alpha)}{\log_2(\beta)})<1$,
and so 
\begin{itemize}
\item
$\psi(\alpha\beta)=\lfloor\log_2(\log_2(\beta))\rfloor+k$ where $k\in\{0,1\}$.
\end{itemize}
Note also that $\psi(\alpha^\beta)=
\log_2(\beta)+\log_2(\log_2(\alpha))+r$
where $0\le r<1$; then 
$\varphi(\psi(\alpha^\beta))=\lfloor\log_2(\log_2(\beta))+\delta\rfloor$
where $\delta:=\log_2(1+\frac{\log_2(\log_2(\alpha))+r}{\log_2(\beta)})<1$,
and so 
\begin{itemize}
\item
$\varphi(\psi(\alpha^\beta))=\lfloor\log_2(\log_2(\beta))\rfloor+h$ where $h\in\{0,1\}$.
\end{itemize}
Now let $\nu:=\psi(\alpha\beta)$ and $\mu:=\psi(\alpha^\beta)$.
Since $\alpha\beta\ueq\alpha^\beta$ we have that $\nu\ueq\mu$.
Besides, if $f$ is the function $f:n\mapsto n+k-h$, then
$f(\varphi(\mu))=\nu\ueq\mu$ and hence, by Theorem \ref{u-equivalenceproperties} (5),
it must be $f(\varphi(\mu))=\mu$, and hence $\mu$ and $\varphi(\mu)$ are at finite distance.
This is not possible because $\mu$ is infinite, and so $\frac{\mu}{\varphi(\mu)}\ge\frac{\mu}{\log_2(\mu)}$ is infinite.
\end{proof}

\begin{theorem}
The function $\text{Exp}'$ does not
satisfy the Ramsey property. That is, there exists a finite coloring of $\N$
such that no increasing sequence $(a_n)_{n\in\N}$
has the property that all elements $a_n, (a_n)^{a_m}$ for $n<m$ are monochromatic.\footnote
{This follows from Lemma 23 of \cite{sa}.}
\end{theorem}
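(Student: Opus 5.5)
By Theorem~\ref{characterizationsRamseyproperty} (first group of equivalences), it suffices to show that there is no Ramsey pair $(\alpha,\beta)\in\hN^2$ with $\alpha^\beta\ueq\alpha$. So I will assume, for the sake of contradiction, that such a Ramsey pair exists. The plan is to run the same kind of iterated-logarithm argument as in the previous theorem for $(\text{Prod},\text{Exp}')$, and to close it with Theorem~\ref{u-equivalenceproperties}~(5). As there, I will omit stars on hyper-extensions of standard functions.

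First, by Proposition~\ref{Ramseypairs}, $\alpha$ and $\beta$ are infinite, $\alpha<\beta$, and $\alpha\ueq\beta$. Combining with the assumption gives $\alpha^\beta\ueq\alpha\ueq\beta$. I will use $\varphi:n\mapsto\lfloor\log_2 n\rfloor$ and $\psi:n\mapsto\lfloor\log_2(\log_2 n)\rfloor$, and set $\mu:=\psi(\alpha^\beta)$. By Theorem~\ref{u-equivalenceproperties}~(3), applied to $\alpha^\beta\ueq\beta$, we get $\mu\ueq\psi(\beta)=\lfloor\log_2\log_2\beta\rfloor$. Also $\mu$ is infinite, since $\mu\ge\psi(\beta)$ up to a finite error and $\beta$ is infinite.

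Next I compare $\varphi(\mu)$ with $\psi(\beta)$. By transfer of elementary inequalities,
$$\log_2\log_2(\alpha^\beta)=\log_2\beta+\log_2\log_2\alpha .$$
Since $\log_2\alpha<\alpha<\beta$, we have $0\le\log_2\log_2\alpha<\log_2\beta$. Hence $\mu=\log_2\beta+s$ with $0\le s<\log_2\beta$, so that $\log_2\beta\le\mu<2\log_2\beta$. Taking $\lfloor\log_2(\cdot)\rfloor$ gives $\varphi(\mu)=\psi(\beta)+h$ for some $h\in\{0,1\}$. I expect the only real care to be needed here: the floors must be handled so that the shift $h$ is genuinely a fixed element of $\{0,1\}$, rather than some non-standard quantity.

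Finally, let $g:\N\to\N$ be the standard function $g(n):=\max\{\varphi(n)-h,1\}$, where $h$ is the fixed value just found. Then $g(\mu)=\psi(\beta)\ueq\mu$, so Theorem~\ref{u-equivalenceproperties}~(5) forces $g(\mu)=\mu$, that is, $\lfloor\log_2\mu\rfloor-h=\mu$. This is impossible for an infinite $\mu$, because $\mu-\log_2\mu$ is infinite. Hence no such Ramsey pair exists, and $\text{Exp}'$ fails the Ramsey property.
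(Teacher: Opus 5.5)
Your proof is correct and is the argument the paper intends. The paper's proof just says to rerun the $(\text{Prod},\text{Exp}')$ argument for a Ramsey pair with $\beta\ueq\alpha^\beta$, and you do exactly that, using $\psi(\beta)$ in place of $\psi(\alpha\beta)$ (so $k=0$) and the shift $n\mapsto n-h$ before invoking Theorem~\ref{u-equivalenceproperties}~(5). One small fix: $s\ge 0$ after taking the floor needs $\log_2\log_2\alpha\ge 1$, which holds because $\alpha$ is infinite; the bound $\log_2\log_2\alpha\ge 0$ alone is not enough. Alternatively, $\mu\ge\lfloor\log_2\beta\rfloor\ge 2^{\psi(\beta)}$ already gives $\varphi(\mu)\ge\psi(\beta)$.
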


\begin{proof}
By using the same arguments as in the proof of the previous theorem,
a nonstandard proof is obtained by showing that the existence of a Ramsey pair $(\alpha,\beta)$ 
such that $\beta\ueq \alpha^\beta$ leads to a contradiction.
\end{proof}

In order to demonstrate further negative results, the following property
of Ramsey's witnesses proved crucial.

\begin{proposition}\label{Ramseypairproperty}
Let $f,g:\N\to\N$. If $\W$ is a Ramsey's witness, then
$$X:=\{(n,m)\in\N^2\mid g(n)\ne g(m)\}\in\W\ \Rightarrow\ 
Y:=\{(n,m)\in\N^2\mid f(n)<g(m)\}\in\W.$$
Equivalently, if $(\alpha,\beta)\in\hN^2$ is a Ramsey pair, then 
$\hg(\alpha)\ne \hg(\beta)\Rightarrow \hf(\alpha)<\hg(\beta)$.\footnote
{See \cite[Prop. 2.13]{dlmmr}.}
\end{proposition}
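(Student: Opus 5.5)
Both formulations say the same thing, because $(\alpha,\beta)$ is a Ramsey pair exactly when $\UU_{(\alpha,\beta)}$ is a Ramsey's witness, and $X\in\UU_{(\alpha,\beta)}\Leftrightarrow(\alpha,\beta)\in\hX$. So I will prove the ultrafilter version and argue by contradiction. Suppose $X\in\W$ and $Y\notin\W$. Then $Z:=X\cap Y^c=\{(n,m)\mid g(n)\ne g(m),\ f(n)\ge g(m)\}\in\W$. Since $\W$ is a Ramsey's witness, there is an increasing sequence $(a_n)_{n\in\N}$ with $(a_n,a_m)\in Z$ for all $n<m$. I will show that no such sequence can exist.

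Along the sequence we have $g(a_n)\ne g(a_m)$ for all $n\ne m$, so the values $g(a_m)$ are pairwise distinct. In particular they are infinitely many distinct natural numbers, hence unbounded. On the other hand, taking $n=1$ gives $g(a_m)\le f(a_1)$ for every $m>1$, so these values are bounded by the fixed number $f(a_1)$. An infinite set of distinct natural numbers cannot lie below $f(a_1)$. This contradiction shows $Y\in\W$.

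For the nonstandard reformulation I can also argue directly. Assume $\hg(\alpha)\ne\hg(\beta)$ and $\hf(\alpha)\ge\hg(\beta)$. Then $(\alpha,\beta)\in{}^*Z$, with $Z$ as above, by the coherence proposition. The Ramsey pair property then yields the same impossible sequence. There is no real obstacle here. The only point needing care is that injectivity of $g$ along the sequence follows from the condition for all pairs $n<m$ (which covers every pair of distinct indices). Together with the fact that the bound $f(a_1)$ is a finite standard number, this forces the contradiction. I also do not need Proposition \ref{Ramseyclosuretensors}, since the definition of a Ramsey's witness suffices.
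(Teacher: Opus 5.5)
Your proof is correct, and it takes a more elementary route than the paper. The paper never uses the defining property of a Ramsey's witness directly. From $X\cap Y^c\in\W$ it invokes Proposition~\ref{Ramseyclosuretensors} ($\W$ lies in the closure of the non-principal tensor powers) to obtain a non-principal $\U$ with $X\cap Y^c\in\U\otimes\U$. It then argues in two steps:
\begin{itemize}
\item $g(\U)$ must be non-principal, since otherwise the diagonal would belong to $g(\U)\otimes g(\U)=(g,g)(\U\otimes\U)$, contradicting $X\in\U\otimes\U$;
\item hence $\Delta^+\in f(\U)\otimes g(\U)=(f,g)(\U\otimes\U)$, i.e.\ $Y\in\U\otimes\U$, a contradiction.
\end{itemize}
You instead extract the increasing sequence for $X\cap Y^c$ and finish by pigeonhole. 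Your two ingredients are the sequence-level shadows of the paper's two steps. Injectivity of $g$ along the sequence corresponds to ``$g(\U)$ is non-principal.'' The uniform bound $g(a_m)\le f(a_1)$ is exactly what ``$\Delta^+\in f(\U)\otimes g(\U)$'' forbids.

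Your version needs neither the closure characterization of Ramsey's witnesses nor the calculus of tensor products and image ultrafilters. It also isolates the whole content of the statement: $X\cap Y^c$ never contains all pairs $(a_n,a_m)$, $n<m$, of an increasing sequence. The ultrafilter enters only to place $X\cap Y^c$ in $\W$, and your direct nonstandard version via ${}^*Z$ is likewise fine. The paper's route is chosen to display the algebra of $\beta\N^2$ and the structure of the space of Ramsey's witnesses, which is one of the article's themes. The price is a detour through a nontrivial topological characterization that this particular statement does not actually require.
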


\begin{proof}
Recall the following properties of tensor products and ultrafilter images:
\begin{itemize}
\item
For every principal $\V\in\beta\N$, the diagonal $\Delta:=\{(n,n)\mid n\in\N\}\in\V\otimes\V$.
\item
For all $\V,\V'\in\beta\N$, if $\V'$ is non-principal then the upper-diagonal
$\Delta^+:=\{(n,m)\mid n<m\}\in\V\otimes\V'$ 
\item
For all $\V,\V'\in\beta\N$, the ultrafilter image $(f,g)(\V,\V')=f(\V)\otimes g(\V')$.\footnote
{$(f,g):\N^2\to\N^2$ denotes the function $(n,m)\mapsto(f(n),g(m))$.}
\end{itemize}
Assume for the sake of contradiction that $X\in\W$ but $Y\notin\W$;
then $X\cap Y^c\in\W$. By Proposition \ref{Ramseyclosuretensors},
there exists a non-principal $\U\in\beta\N$ such that $X\cap Y^c\in\U\otimes\U$,
and hence both $X\in\U\otimes\U$ and $Y^c\in\U\otimes\U$.
We observe that $g(\U)$ is non-principal, otherwise
the diagonal $\Delta\in g(\U)\otimes g(\U)=(g,g)(\U\otimes\U)$,
and hence we would have $(g,g)^{-1}(\Delta)=\{(n,m)\in\N^2\mid g(n)=g(m)\}\in\U\otimes\U$,
against the assumption $X\in\U\otimes\U$.
Finally, observe that, since $g(\U)$ is non-principal,
the upper diagonal $\Delta^+\in f(\U)\otimes g(\U)=(f,g)(\U\otimes\U)$, and hence
$Y=(f,g)^{-1}(\Delta^+)\in\U\otimes\U$, against the assumption $Y^c\in\U\otimes\U$.

To derive the nonstandard characterization, recall that
$(\alpha,\beta)$ is a Ramsey pair if and only if 
the generated ultrafilter $\W:=\UU_{(\alpha,\beta)}$ is a Ramsey's witness.
Then observe that $X\in\W\Leftrightarrow (\alpha,\beta)\in\hX\Leftrightarrow \hg(\alpha)\ne \hg(\beta)$,
and that $Y\in\W\Leftrightarrow (\alpha,\beta)\in{}^*Y\Leftrightarrow \hf(\alpha)<\hg(\beta)$.
\end{proof}

I do not know whether the above property characterizes
Ramsey pairs.

\smallskip
Before proceeding, we need to introduce a few notions.
Let $p$ be a prime number. We will consider the following functions:
\begin{itemize}
\item
The $p$-adic valuation $v_p:\N\to\N_0$ where $v_p(n)$ is the exponent 
of $p$ in the prime factorization of $n$. 
\item
$\text{smod}_p:\N\to\{1,2,\ldots,p-1\}$
where $\text{smod}_p(n)$ is congruent to $\frac{n}{p^{v_p(n)}}$ modulo $p$.
\end{itemize}

We observe that if we write $n$ in base $p$, and we count digit positions from right to left
starting with position $0$ as the last digit on the right, then $v_p(n)$ is the position of 
the first nonzero digit, and $\text{smod}_p(n)$ is that digit.
Clearly, every number $n$ can be written in the form
$n=p^{v_p(n)}n'$ where $n'\equiv \text{smod}_p(n)\not\equiv 0\mod p$.

We will also consider the following:
\begin{itemize}
\item
For $k\in\N_0$, let $d_{p,k}:\N\to\{0,1,\ldots,p-1\}$ be the function where
$d_{p,k}(n)$ is the coefficient of $p^k$ in the base $p$ expansion of $n$.
\end{itemize}
In other words, $d_{p,k}(n)$ is the digit that appears in the $k$-th position from the 
right of $n$ written in base $p$ (recall that we start with position $0$ as the last digit on the right).\footnote
{\emph{E.g.}, $d_{3,3}(139)=2$ because
$139$ written in base $3$ is the string $12011$; \emph{i.e.},
$139=1\cdot 3^4+2\cdot 3^3+0\cdot 3^2+1\cdot 3+1$ .}

For simplicity in what follow we will write $v_p$, $\text{smod}_p$, and $d_{p,k}$
also to denote the hyper-extensions ${}^*v_p$, ${}^*\text{smod}_p$, and ${}^*d_{p,k}$,
respectively. 

Note that, since the functions $\text{smod}_p$ and $d_{p,k}$ have a finite range,
$\xi\ueq\eta$ implies that $\text{smod}_p(\xi)=\text{smod}_p(\eta)$
and $d_{p,k}(\xi)=d_{k,p}(\eta)$. We will use the following property, that is easily verified.
\begin{itemize}
\item[$(\dagger)$]
\ \ For all $x,y\in\N$, it is $d_{p,k}(p^k\cdot x+y)\equiv x+d_k(y)\mod p$.
\end{itemize}
Note that, by \emph{transfer}, the same property holds for all $\xi,\eta\in\hN$.

\smallskip
The following general result was recently proved by nonstandard methods.

\begin{theorem}
Let $f(x,y)=a x^n+P(y)$ where $a\in\Z$, $n\in\N$, and $P(y)\in\Z[y]$ is
a polynomial with $P(0)=0$. Then $f$ satisfies the Ramsey property
if and only if $f$ is a multiple of $\text{Sum}$ or of $\text{Diff}$.
\end{theorem}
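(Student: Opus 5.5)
The plan is to prove the two directions separately, using the characterization of Theorem \ref{characterizationsRamseyproperty}. By that theorem, $f$ has the Ramsey property if and only if there is a Ramsey pair $(\alpha,\beta)$ with
$$a\alpha^n+P(\beta)\ \ueq\ \alpha.$$

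\emph{The ``if'' direction.} For $f=\text{Sum}$ this follows from Hindman's Theorem, and for $f=\text{Diff}$ from Theorem \ref{differenceRamsey}. For a multiple $f=c\cdot\text{Sum}$ or $c\cdot\text{Diff}$ with $c\neq 1$, I would imitate the nonstandard proof of Theorem \ref{differenceRamsey}. The aim is a Ramsey pair of the form $(\alpha,\beta)=(F(\nu,{}^*\nu),F(\nu,{}^{**}\nu))$, with $F$ an affine combination whose coefficients involve powers of $c$, chosen so that $c(\beta\pm\alpha)$ is the ${}^*$-image of $\alpha$. Proposition \ref{Ramseyclosuretensors} would then certify the Ramsey pair property, by backward transfer on the lowest level as in that proof. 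If no such pair comes out, I would use an additive idempotent $\gamma$ that also satisfies $c\gamma\ueq\gamma$-type compatibility; such a $\gamma$ should be obtained by Ellis' Lemma in a suitable closed subsemigroup of $\beta\N$. Concretely, I would take the closure of the dilation orbit and check that it is closed under $\oplus$. I expect this existence step to be the main difficulty of the positive direction.

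\emph{The ``only if'' direction.} Assume $(\alpha,\beta)$ is a Ramsey pair with $\xi:=a\alpha^n+P(\beta)\ueq\alpha$; in particular $\xi\in\hN$ is positive. Write $P(y)=b_dy^d+\ldots+b_1y$ with $b_d\neq0$.

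\emph{First step: growth.} Applying Proposition \ref{Ramseypairproperty} with $g=\mathrm{id}$ and $f=x\mapsto x^{N}$ gives $\alpha^{N}<\beta$ for every $N$. Hence $\beta$ is ``way larger'' than $\alpha$. Both $\xi$ and $\alpha$ are infinite, and $\xi\ueq\alpha$, while $\xi$ has the size of $b_d\beta^d$, which is much larger than $\alpha$.

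\emph{Second step: valuations and digits.} For $\xi\ueq\alpha$ we get $v_p(\xi)\ueq v_p(\alpha)$ and $\text{smod}_p(\xi)=\text{smod}_p(\alpha)$ for every prime $p$, by Theorem \ref{u-equivalenceproperties}. Also $v_p(\alpha)\ueq v_p(\beta)$, since $(\alpha,\beta)$ is a Ramsey pair and so $\alpha\ueq\beta$. The argument then splits into cases.

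\emph{Case $v_p(\alpha)\neq v_p(\beta)$.} Proposition \ref{Ramseypairproperty}, applied to $g=v_p$, forces $v_p(\alpha)$ to be far below $v_p(\beta)$. Then $v_p(\xi)=v_p(a\alpha^n)$ whenever $a\neq0$ and $n\ge2$ or $p\nmid a$. We get $n\,v_p(\alpha)+v_p(a)\ueq v_p(\alpha)$, so Theorem \ref{u-equivalenceproperties}(5) gives a contradiction unless $n=1$. Moreover $\text{smod}_p(a\alpha)=\text{smod}_p(\alpha)$ forces $a\equiv1\bmod p$ for infinitely many suitable $p$, hence $a=1$. The degenerate case $a=0$ is handled by the same valuation comparison applied to $P(\beta)$ versus $\alpha$, using that $v_p(\beta)$ is far above $v_p(\alpha)$.

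\emph{Case $v_p(\alpha)=v_p(\beta)=:k$.} I would compare the digits $d_{p,k}$ via the identity $(\dagger)$. This should give a congruence between $\text{smod}_p(\alpha)$ and $a\,\text{smod}_p(\alpha)^n+P'(\text{smod}_p(\beta))$, where $P'$ retains only the terms of $P$ whose valuation can equal $k$. Running this over infinitely many primes $p$ should force $d=1$ and $b_1=\pm a$, giving $f=a(x\pm y)$.

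I expect the main obstacle to be the bookkeeping in the equal-valuation case. Cross terms in $\alpha^n$ and $\beta^j$ can land at the same digit position, and one must choose the primes $p$ (not dividing $a$, $b_d$, or any relevant coefficients) so that the resulting polynomial congruences in $\text{smod}_p(\alpha)$ genuinely pin down the coefficients.
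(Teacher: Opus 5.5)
The proposal has genuine gaps. The first concerns the statement itself: read literally, with ``multiple'', it is false.

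\textbf{The ``if'' direction for $c\neq1$ cannot work.} No multiple $c\cdot\text{Sum}$ or $c\cdot\text{Diff}$ with $c\ge2$ has the Ramsey property.
\begin{itemize}
\item For $2(x+y)$ this is elementary. Color $n$ by $\text{smod}_5(n)$. Any increasing $(a_n)$ has a subsequence along which $v_5(a_n)$ is constant or strictly increasing. Then $\text{smod}_5(2(a_n+a_m))$ is $4i$ or $2i$, never $i=\text{smod}_5(a_n)$.
\item In general, your growth step gives $\alpha^N<\beta$, so $c(\beta\pm\alpha)=c\beta(1+\varepsilon)$ with $\varepsilon\approx0$. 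Every finite-range function $x\mapsto\lfloor M\{\log_B x\}\rfloor$ takes equal values on $u$-equivalent numbers (Theorem~\ref{u-equivalenceproperties}(4)). Hence $c(\beta\pm\alpha)\ueq\beta$ would make $\log_B c$ infinitely close to an integer, hence an integer, which fails for $B=c+1$.
\item Your fallback is excluded outright: $c\gamma\ueq\gamma$ with $c\ge2$ contradicts Theorem~\ref{u-equivalenceproperties}(5).
\end{itemize}

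\textbf{The degenerate cases go the other way.} $f(x,y)=y$ (so $a=0$) and $f(x,y)=x$ (so $P=0$) trivially have the Ramsey property. So your claim that $a=0$ is ruled out by comparing valuations is wrong. What is true, and what you should aim at, is this: for $a\neq0\neq P$, $f$ has the Ramsey property if and only if $f\in\{\text{Sum},\text{Diff}\}$.

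\textbf{Even for that statement, the ``only if'' direction is missing an idea, not just bookkeeping.} Valuations, $\text{smod}_p$, and digits at positions $g(v_p(x))$ only see the bottom of $\xi$. They cannot detect $P$.
\begin{itemize}
\item When $v_p(\alpha)<v_p(\beta)$ (forced for Sum at every odd $p$), Proposition~\ref{Ramseypairproperty} puts $v_p(\beta)$ above every such position. These tests then see only $a\alpha^n$, so $x+2y$ and $x+y^2$ pass all of them.
\item When $v_p(\alpha)=v_p(\beta)=k$, $\alpha\ueq\beta$ forces $p^{k+M}\mid\beta-\alpha$ for every finite $M$. So $\beta+c(\beta-\alpha)$ (e.g.\ $2y-x$) and $c(y-x)$ pass as well.
\item Moreover, for $n=1$ and $k\ge1$ the congruence at position $k$ reads $(a+b_1)i\equiv i$ unless $p$ divides $a+b_1$. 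This gives $a+b_1\in\{0,1\}$, not $b_1=\pm a$.
\end{itemize}

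You note that $\xi$ has the size of $b_d\beta^d$, but you compare it with $\alpha$, where size yields nothing. The missing step is an archimedean comparison with $\beta$, as in the paper's $\lfloor\log_2\rfloor$ argument for $(\text{Prod},\text{Exp}')$:
\begin{itemize}
\item Since $\alpha^{n+1}<\beta$, one has $\xi=b_d\beta^d(1+\varepsilon)$ with $\varepsilon\approx0$. So $\lfloor\log_2\xi\rfloor=d\lfloor\log_2\beta\rfloor+h$ with $h$ a finite integer.
\item As $\lfloor\log_2\xi\rfloor\ueq\lfloor\log_2\beta\rfloor$, Theorem~\ref{u-equivalenceproperties}(5) applied to $x\mapsto dx+h$ forces $d=1$.
\item The $\{\log_B x\}$ invariant then forces $b_1=1$.
\end{itemize}
Only after this reduction to $a\alpha^n+\beta\ueq\alpha$ does your $p$-adic case analysis give $n=1$ and $a=\pm1$, using the digit function $x\mapsto d_{p,n v_p(x)}(x)$ as in Theorem~\ref{n^2+m}. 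This is also why the paper's worked examples, Theorems~\ref{2n+m} and~\ref{n^2+m}, get by with $p$-adic arguments alone: there $P(y)=y$ from the start.
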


\begin{proof}
This is the combination of Corollary 5.10 and Proposition 5.11 of \cite{dlmrv}.
\end{proof}

For simplicity, we will not provide a complete proof here, 
but will simply consider two relevant special cases in detail; 
this should already be sufficient to illustrate how the nonstandard technique is used.

The first special case reported below provides a negative answer to the shiftless 
version of an open question posed in \cite[Remark 3.15]{kmrr}.\footnote
{This problem was also posed as Question \# 2 in the problem session
of the conference ``Perspectives on Ergodic Theory and Its Interactions"
celebrating the work and impact of Vitaly Bergelson, 
held in Warsaw from June 23-27, 2025.}

\begin{theorem}\label{2n+m}
The function $f:\N^2\to\N$ where $f(n,m)=2n+m$ does not satisfy the
Ramsey property, \emph{i.e.}, 
there exists a finite coloring of $\N$ 
such that for no increasing sequence $(a_n)_{n\in\N}$ the 
elements $a_n, 2a_n+a_m$ for $n<m$ are all monochromatic.
\end{theorem}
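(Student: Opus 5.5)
By Theorem \ref{characterizationsRamseyproperty}, it suffices to show that there is no Ramsey pair $(\alpha,\beta)\in\hN^2$ with $2\alpha+\beta\ueq\alpha$. So I will assume such a pair exists and derive a contradiction by working in base $p=3$. Write $a:=v_3(\alpha)$ and $b:=v_3(\beta)$. By Proposition \ref{Ramseypairs} we have $\alpha\ueq\beta$. Hence, by Theorem \ref{u-equivalenceproperties} (3),(4), $a\ueq b$ and $\text{smod}_3(\alpha)=\text{smod}_3(\beta)=:s\in\{1,2\}$. Likewise every finite-range digit function agrees on $\alpha$, $\beta$, and on $2\alpha+\beta$. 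The argument then splits according to whether $a\ne b$ or $a=b$.

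\emph{Case $a\ne b$.} I apply Proposition \ref{Ramseypairproperty} with $f=g=v_3$. Since $v_3(\alpha)\ne v_3(\beta)$, it gives $v_3(\alpha)<v_3(\beta)$, i.e.\ $a<b$. Then $2\alpha+\beta=3^a(2\alpha'+3^{b-a}\beta')$ with $\alpha'\equiv s \bmod 3$. So $v_3(2\alpha+\beta)=a$ and $\text{smod}_3(2\alpha+\beta)\equiv 2s\equiv -s\not\equiv s \pmod 3$. But $2\alpha+\beta\ueq\alpha$ forces $\text{smod}_3(2\alpha+\beta)=\text{smod}_3(\alpha)=s$, a contradiction.

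\emph{Case $a=b$.} Write $\alpha=3^a(s+3x)$ and $\beta=3^a(s+3y)$ with $x,y\in\hN_0$. Then
$$2\alpha+\beta=3^{a+1}(s+2x+y).$$
The digit in position $a+1$ gives $d_{3,a+1}(\alpha)\equiv x$ and $d_{3,a+1}(\beta)\equiv y \pmod 3$. Careful: $d_{3,a+1}$ is not a fixed standard function, since $a$ is infinite. So instead I use the standard function $n\mapsto d_{3,v_3(n)+1}(n)$, which has finite range and therefore takes the same value $t$ on $\alpha$ and $\beta$. Hence $x\equiv y\equiv t \pmod 3$, and so $s+2x+y\equiv s+3t\equiv s\not\equiv 0 \pmod 3$. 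Therefore $v_3(2\alpha+\beta)=a+1$. On the other hand, $2\alpha+\beta\ueq\alpha$ gives $v_3(2\alpha+\beta)\ueq v_3(\alpha)$, i.e.\ $a+1\ueq a$. By Theorem \ref{u-equivalenceproperties} (5), applied to the successor function, this yields $a+1=a$, which is absurd.

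The main obstacle I expect is the case $a=b$. There the valuation jumps, and one has to guarantee that it jumps by exactly one. The device is to encode the ``next digit after the valuation'' as a single standard finite-range function, so that $u$-equivalence transfers it from $\alpha$ to $\beta$. Equivalently, in standard terms, the coloring that works is by the pair (first nonzero ternary digit, following digit, parity of $v_3$). That the $v_3$-part reduces to parity should be recoverable from the fact that (5) only rules out $a+1\ueq a$, and $a\mapsto a+1$ changes parity.
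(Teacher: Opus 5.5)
Your proof is correct, but it takes a different route from the paper in the choice of prime, and this changes how the equal-valuation case goes. Your case $v_3(\alpha)\ne v_3(\beta)$ is the paper's argument: Proposition \ref{Ramseypairproperty} forces $v(\alpha)<v(\beta)$, and then $\text{smod}(2\alpha+\beta)=2\,\text{smod}(\alpha)\ne\text{smod}(\alpha)$.

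The paper, however, works with $p=5$. Write $\alpha=5^{\gamma}\alpha'$ and $\beta=5^{\gamma}\beta'$ with $\alpha'\equiv\beta'\equiv i\pmod 5$. If $v_5(\alpha)=v_5(\beta)=\gamma$, then $2\alpha'+\beta'\equiv 3i\not\equiv 0 \pmod 5$. So there is no carry, $v_5(2\alpha+\beta)=\gamma$, and $\text{smod}_5(2\alpha+\beta)=3i\ne i$ gives the contradiction at once. The whole proof therefore needs only the single finite-range function $\text{smod}_5$; in standard terms, coloring by the lowest nonzero base-$5$ digit already suffices.

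With $p=3$ you have $2s+s\equiv 0\pmod 3$, so you are forced into a carry analysis. You need the extra finite-range function $n\mapsto d_{3,v_3(n)+1}(n)$ to show that the valuation jumps by exactly one. You then need item (5) of Theorem \ref{u-equivalenceproperties} for the successor map, which amounts to adding a parity-of-$v_3$ color and ends with a $12$-coloring. You handle this correctly, including the point that $d_{3,a+1}$ must arise as the hyper-extension of one standard function of $n$. It is the same device the paper uses for Theorem \ref{n^2+m}: the digit at position $2v(n)$, plus item (5) for $v\mapsto 2v$.

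So your route shows how to cope with carries when the prime is ``bad'' for the coefficients. The paper sidesteps carries altogether by choosing a prime for which $2+1$ is invertible.
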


\begin{proof}
Assume for the sake of contradiction that the above pattern is partition regular.
Then there exists a Ramsey pair $(\alpha,\beta)$ such that $\alpha\ueq 2\alpha+\beta$.
Let $v=v_5:\N\to\N_0$ be the $5$-adic valuation. Write $\alpha=5^{v(\alpha)}\alpha'$ and
$\beta=5^{v(\beta)}\beta'$ where $\alpha',\beta'$ are not divisible by $5$.
Note that $\alpha'\ueq\beta'$, and hence $\alpha'\equiv\beta'\equiv i\mod 5$ 
where $i=\text{smod}(\alpha)=\text{smod}(\beta)$.

Now observe that $v(\alpha)=v(\beta)=\gamma$ is not possible, as otherwise
we would have $5^\gamma\alpha'\ueq 2\cdot 5^\gamma\alpha'+5^\gamma\beta'=5^\gamma(2\alpha'+\beta')$.
But then $\alpha'\equiv 2\alpha'+\beta' \mod 5$, and hence $i\equiv 3 i\mod 5$,
contradicting $i\ne 0$.

Since $v(\alpha)\ne v(\beta)$, by Proposition \ref{Ramseypairproperty} it must be 
$v(\alpha)<v(\beta)$. This also leads to a contradiction because then we would have
$2\alpha+\beta=5^{v(\alpha)}\gamma'$ where 
$\gamma'=2\alpha'+5^{v(\beta)-v(\alpha)}\beta'\equiv 2i\mod 5$.
Since $\alpha=5^{v(\alpha)}\alpha'$ where $\alpha'\equiv i\mod 5$,
it would follow $2i\equiv i\mod 5$, contradicting $i\ne 0$.
\end{proof}

The second example answers in the negative a
problem posed in \cite[Remark 3.17]{kmrr}, and that was also
proposed in a first version of \cite{ac}.

\begin{theorem}\label{n^2+m}
The function $f:\N^2\to\N$ where $f(n,m)=n^2+m$ does not satisfy the
Ramsey property, \emph{i.e.}, there exists a finite coloring of $\N$ 
such that for no increasing sequence $(a_n)_{n\in\N}$ the 
elements $a_n, a_n^2+a_m$ for $n<m$ are all monochromatic.
\end{theorem}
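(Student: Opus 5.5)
The plan is to argue by contradiction, as in the proof of Theorem \ref{2n+m}. If the pattern were partition regular, Theorem \ref{characterizationsRamseyproperty} would give a Ramsey pair $(\alpha,\beta)$ with $\alpha\ueq\alpha^2+\beta$. By Proposition \ref{Ramseypairs} we also have $\alpha\ueq\beta$. Fix a prime $p$, write $v=v_p$, and set $a:=v(\alpha)$, $b:=v(\beta)$, $\alpha=p^a\alpha'$, $\beta=p^b\beta'$. Since $\text{smod}_p$ has finite range, $\text{smod}_p(\alpha)=\text{smod}_p(\beta)=i\ne0$.

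\emph{Step 1: $p$ divides $\alpha$.} For every $m\in\N$ the map $n\mapsto n \bmod m$ has finite range. So $\alpha\ueq\beta\ueq\alpha^2+\beta$ forces all three to have the same residue $r$ modulo $m$, by Theorem \ref{u-equivalenceproperties} (4). Then $r^2+r\equiv r$, i.e.\ $r^2\equiv0\mod m$. Taking $m=p$ gives $p\mid\alpha$, hence $a\ge1$.

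\emph{Step 2: the case $a\ne b$.} Apply Proposition \ref{Ramseypairproperty} twice with $g=v$: once with $f=v$ and once with $f=2v$. Since $v(\alpha)\ne v(\beta)$, this yields $2a<b$. Then $\alpha^2+\beta=p^{2a}(\alpha'^2+p^{b-2a}\beta')$ has $p$-adic valuation exactly $2a$. By Theorem \ref{u-equivalenceproperties} (3), $v(\alpha)\ueq v(\alpha^2+\beta)$, i.e.\ $a\ueq 2a$. Then (5) gives $2a=a$, so $a=0$, which contradicts Step 1.

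\emph{Step 3: the case $a=b$.} This is the step I expect to be the real obstacle, because valuations and least significant digits give no contradiction here: $\alpha^2+\beta=p^a(\beta'+p^a\alpha'^2)$ has valuation $a$ and $\text{smod}$ equal to $i$, exactly like $\alpha$. The idea is to look further up the base-$p$ expansion, at the digit in position $2v(n)$, using the finite-range function
$$D:\N\to\{0,\ldots,p-1\},\qquad D(n):=d_{p,2v_p(n)}(n).$$
Since $\alpha\ueq\beta\ueq\alpha^2+\beta$, these three numbers share the same value $D(\alpha)=D(\beta)=D(\alpha^2+\beta)$. But $v(\beta)=v(\alpha^2+\beta)=a$, so by $(\dagger)$ (transferred to $\hN$), applied with $k=2a$, $x=\alpha'^2$ and $y=\beta$,
$$D(\alpha^2+\beta)=d_{p,2a}(p^{2a}\alpha'^2+\beta)\equiv\alpha'^2+d_{p,2a}(\beta)=\alpha'^2+D(\beta)\equiv i^2+D(\beta)\mod p .$$
Hence $i^2\equiv0\mod p$, contradicting $i\ne0$. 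Together with Step 2 this excludes all cases, so no such Ramsey pair exists and $f(n,m)=n^2+m$ fails the Ramsey property. In the write-up I would only double-check the bookkeeping: that $D$ is well defined (it is just a composition of $v_p$ with the digit functions, hence has a hyper-extension behaving as expected) and that $(\dagger)$ holds with the index $k=2a$ depending on the infinite number $a$, which is fine by \emph{transfer} since $(\dagger)$ is quantified over all $k$.
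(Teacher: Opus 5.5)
Your proof is correct and follows the paper's argument essentially verbatim: the residue argument gives $v(\alpha)\ge 1$; in the case $v(\alpha)\ne v(\beta)$, Proposition \ref{Ramseypairproperty} forces $2v(\alpha)<v(\beta)$ and hence $v(\alpha)\ueq 2v(\alpha)$; and in the case $v(\alpha)=v(\beta)$, the digit function $n\mapsto d_{p,2v_p(n)}(n)$ combined with $(\dagger)$ finishes the argument. The differences are cosmetic: the paper fixes $p=3$ while your argument works for any prime $p$, and in Step 2 a single application of the proposition with $f=2v$ already suffices.
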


\begin{proof}
As above, we proceed by contradiction, and suppose 
that there exists a Ramsey pair $(\alpha,\beta)$ such that $\alpha\ueq\alpha^2+\beta$.
Let $v=v_3$ be the $3$-adic valuation, let $\text{smod}=\text{smod}_3$,
and for $k\in\N_0$ let $d_k=d_{3,k}$.

Write $\alpha=3^{v(\alpha)}\alpha'$ and
$\beta=3^{v(\beta)}\beta'$ where $\alpha',\beta'$ are not divisible by $3$.
Note first that $\alpha\equiv \alpha^2+\beta\equiv \alpha^2+\alpha\mod 3$
implies that $\alpha\equiv 0$, and hence also $\beta\equiv 0\mod 3$.
This shows that $v(\alpha),v(\beta)\ge 1$.

Now suppose $v_3(\alpha)=v_3(\beta)=\gamma\ge 1$.
Let $\varphi:\N\to\{0,1,2\}$ be the function where $\varphi(n)=d_{2 v(n)}(n)$,
\emph{i.e.}, $\varphi(n)$ is the coefficient of $3^{2 v(n)}$ in the base $3$ expansion of $n$.
We observe that $\alpha^2+\beta=3^{2\gamma}\cdot\alpha'^2+3^\gamma\cdot\beta'=
3^\gamma(3^\gamma\cdot\alpha'^2+\beta')$,
and so $v(\alpha^2+\beta)=v(\alpha)=\gamma$.
By using $(\dagger)$, we have that 
$$\varphi(\alpha^2+\beta)=d_{2\gamma}(\alpha^2+\beta)=d_{2\gamma}(3^{2\gamma}\cdot\alpha'^2+\beta)\equiv
\alpha'^2+d_{2\gamma}(\beta)\mod 3.$$
On the other hand, $\alpha^2+\beta\ueq\alpha\ueq\beta$ implies 
$\varphi(\alpha^2+\beta)=\varphi(\beta)=d_{2\gamma}(\beta)$, and so 
$\alpha'^2\equiv 0\mod 3$, contradicting $\alpha'\not\equiv 0\mod 3$.

Since $v(\alpha)\ne v(\beta)$, Proposition \ref{Ramseypairproperty} guarantees that
$2 v(\alpha)<v(\beta)$, and also in this case we reach a contradiction.
Indeed, $\alpha^2+\beta=3^{2 v(\alpha)}(\alpha'^2+3^{v(\beta)-2 v(\alpha)}\beta')$,
and so $v(\alpha^2+\beta)=2 v(\alpha)$. On the other hand,
$\alpha^2+\beta\ueq\alpha$ implies that $v(\alpha^2+\beta)\ueq v(\alpha)$.
Finally, by Theorem \ref{u-equivalenceproperties}, we know that
$v(\alpha)\ueq 2 v(\alpha)$ implies $v(\alpha)=2 v(\alpha)$,
contradicting $v(\alpha)\ne 0$.
\end{proof}

\subsection{Open problems}

We close this section by posing three of the main open problems
about the Ramsey property for pairs of functions.

\begin{itemize}
\item
\emph{Does the pair of functions $(\text{Sum}, \text{Diff})$ satisfy the Ramsey property?
That is, is it true that for every finite partition of $\N$
there exists an increasing sequence $(a_n)_{n\in\N}$ such that
elements $a_n+a_m, a_m-a_n$ for $n<m$ are all monochromatic?}

\smallskip
\item
\emph{Does the pair of functions $(\text{Diff}, \text{Prod})$ satisfy the Ramsey property?}

\smallskip
\item
\emph{Does the pair of functions $(\text{Prod}, \text{Exp})$ satisfy the Ramsey property?}\footnote
{Recall that $\text{Exp}:(n,m)\mapsto m^n$.}
\end{itemize}

\section{Ultrapower construction of $\hN$}\label{sec-ultrapower}

In this final section we will present a model of the hypernatural numbers $\hN$
constructed as an appropriate ultrapower of $\N$.

\begin{definition}
Let $\U$ be an ultrafilter on a set $I$, and let $X$ be a nonempty set. 
The \emph{ultrapower} of $X$ modulo $\U$ is the quotient set
$$X^I\!/\U:=\text{Fun}(I,X)/\equiv_\U$$
where two functions $f,g:I\to X$ are ``$\U$-equivalent" if
they agree on a set of indexes that belongs to $\U$, \emph{i.e.},
$$f\equiv_\U g\Longleftrightarrow\{i\in I\mid f(i)=g(i)\}\in\U.$$
\end{definition}

Note that the relation $\equiv_\U$ is reflexive because $I\in\U$;
besides, $\equiv_\U$ is trivially symmetric.
As for transitivity, observe that if $\Lambda:=\{i\in I\mid f(i)=g(i)\}\in\U$
and $\Gamma:=\{i\in I\mid g(i)=h(i)\}\in\U$, then also the
set $\Theta:=\{i\in I\mid f(i)=h(i)\}\in\U$, because $\Theta$ is
a superset of $\Lambda\cap\Gamma\in\U$.

\begin{remark}
While the properties of a filter are enough to prove that $\equiv_\U$ is
an equivalence relation, the use of an ultrafilter is necessary to
guarantee that the resulting ultrapower satisfies the \emph{transfer principle}.

Ultrapowers have strong properties that needs the formalism
of mathematical logic to be precisely formulated.
The fundamental
\emph{\L os' Theorem} states that every ultrapower $X^I\!/\U$ satisfies
the same ``elementary" properties as $X$, where a
property is ``elementary" if it can be formalized as a formula
in an appropriate first-order language. More precisely,
in the case of ultrapowers one can take the richest possible
first-order language that contains one symbol for each element, 
each function, and each relation on the starting set $X$.
\end{remark}

Below we will focus on the natural numbers and
construct a set of hypernatural numbers $\hN$ as an appropriate
ultrapower of $\N$. However, we remark that the given definitions 
and properties also apply to ultrapowers of any set $X$.

\smallskip
Let us fix an ultrafilter $\U$ on a set of indexes $I$,
and denote by $\hN$ the corresponding ultrapower of the natural numbers, \emph{i.e.},
$$\hN=\N^I\!/\U:=\{[f]_\U\mid f:I\to\N\}$$ 

\begin{definition}\label{def-extension}
For every $A\subseteq\N^d$, the \emph{hyper-extension} $\hA\subseteq\hN^d$ 
is defined by setting for all $f_1,\ldots,f_d:I\to\N$:
$$\hA:=\left\{([f_1]_\U,\ldots,[f_d]_\U)\in\hN^d\ \middle|\  
\{i\in I\mid f_s(i)\in A\}\in\U\ \text{for every}\ s=1,\ldots,d\right\}.$$
For every function $F:A\to\N$ where $A\subseteq\N^d$, the \emph{hyperextension}
${}^*F:\hA\to\hN$ is the function defined by setting for every 
$([f_1]_\U,\ldots,[f_d]_\U)\in\hA$:
$${}^*F([f_1]_\U,\ldots,[f_d]_\U)=[F\circ(f_1,\ldots,f_d)]_\U,$$
where $F\circ(f_1,\ldots,f_d):I\to\N$ is the function
$i\mapsto F(f_1(i),\ldots,f_d(i))$.
\end{definition}

We observe that the above definitions are well-posed.
Indeed, if $[f_s]_\U=[f'_s]_\U$ for $s=1,\ldots,d$, then
the sets $\Lambda_s:=\{i\in I\mid f_s(i)=f'_s(i)\}\in\U$, and so
$\Lambda:=\bigcap_{s=1}^d\Lambda_s\in\U$.
In consequence, the following properties are equivalent to each other:
\begin{itemize}
\item
$([f_1]_\U,\ldots,[f_d]_\U)\in\hA$.
\item
$\{i\in I\mid f_s(i)\in A\}\in\U$ for every $s=1,\ldots,d$.
\item
$\{i\in I\mid f_s(i)\in A\}\cap\Lambda\in\U$ for every $s=1,\ldots,d$.
\item
$\{i\in I\mid f'_s(i)\in A\}\in\U$ for every $s=1,\ldots,d$.
\item
$([f'_1]_\U,\ldots,[f'_d]_\U)\in\hA$.
\end{itemize}
Similarly, it also proved that if $[f_s]_\U=[f'_s]_\U$ for every $s=1,\ldots,d$,
then $[F\circ(f_1,\ldots,f_d)]_\U=[F\circ(f'_s,\ldots,f'_d)]_\U$.

\begin{theorem}
Let $I$ be the set of all finite nonempty subsets of $\PP(\N)$,
and let $\U$ be any ultrafilter on $I$ that extends
the family $\F:=\{\check{A}\mid A\subseteq\N\}$, where
$\check{A}=\{i\in I\mid A\in i\}$.
Then the ultrapower $\hN:=\N^I\!/\U$ satisfies the three principles
of extension, transfer, and $\mathfrak{c}$-enlargement.
\end{theorem}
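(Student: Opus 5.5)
We plan to verify the three principles one at a time, with \L{}os' Theorem doing the work for transfer. The first step is to check that $\F$ has the finite intersection property, so that $\U$ exists. Given $A_1,\ldots,A_n\subseteq\N$, the index $i=\{A_1,\ldots,A_n\}\in I$ lies in $\check{A}_1\cap\ldots\cap\check{A}_n$, and we extend $\F$ to an ultrafilter by Zorn's Lemma. For \emph{extension}, we use Definition \ref{def-extension} to define $\hA$ and ${}^*F$. Each $n\in\N$ is identified with the class $[c_n]_\U$ of the constant function, and the identification is injective because $\emptyset\notin\U$. With this identification $A\subseteq\hA$, and ${}^*F$ extends $F$, since $F\circ(c_{n_1},\ldots,c_{n_d})=c_{F(n_1,\ldots,n_d)}$.

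The remaining part of extension is properness, $\hN\ne\N$. We exhibit $\nu=[g]_\U$ with $g(i):=\min\bigcap i$ when $\bigcap i\ne\emptyset$, and $g(i):=1$ otherwise. For every $n\in\N$ the set $\check{(n,+\infty)}=\{i\mid (n,+\infty)\in i\}\in\U$, and on that set $g(i)>n$. Hence $\{i\mid g(i)=n\}\notin\U$, so $\nu\ne n$ for every $n$. In fact $\nu$ is infinite.

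Next comes \emph{transfer}, which we expect to be the main obstacle because it needs the precise notion of elementary property. We fix the first-order language with a symbol for every $n\in\N$, every $A\subseteq\N^d$ and every $F:\N^d\to\N$. We interpret it in $\N$ in the obvious way and in $\hN$ via the starred objects. We then prove \L{}os' Theorem: for every formula $\varphi(x_1,\ldots,x_k)$ and all $f_1,\ldots,f_k$,
$$\hN\models\varphi([f_1]_\U,\ldots,[f_k]_\U)\iff\{i\in I\mid \N\models\varphi(f_1(i),\ldots,f_k(i))\}\in\U.$$
We argue by induction on formulas.
\begin{itemize}
\item For atomic formulas and terms, the claim follows from Definition \ref{def-extension}, after first showing by induction on terms that the value of a term is the class of the pointwise value.
\item For conjunctions, we use closure of $\U$ under intersections and supersets.
\item For negation, we use the ultra property: exactly one of a set and its complement is in $\U$.
\item For $\exists y$, the left-to-right direction is immediate. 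For the converse, we choose witnesses pointwise (axiom of choice) to build $g$ with $\varphi(f_1(i),\ldots,g(i))$ on the given $\U$-large set.
\end{itemize}
Applying the theorem to sentences whose only parameters are standard objects gives $P(X_1,\ldots,X_k)\Leftrightarrow P(\hX_1,\ldots,\hX_k)$, because a sentence with constant parameters holds on all of $I$ or on none of it. Higher-type objects used in the paper, such as $\PP(A)$ and $\text{Fin}(\N)$, would be treated the same way in a superstructure ultrapower. We intend only to indicate this, not carry it out.

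Finally, for $\mathfrak{c}$\emph{-enlargement}, let $\{A_j\mid j\in J\}$ have the finite intersection property. Since $|\PP(\N)|=\mathfrak{c}$, we may assume the sets are subsets of $\N$. Define $h(i)\in\bigcap\{A_j\mid A_j\in i\}$ whenever this intersection is nonempty, and $h(i):=1$ otherwise. For each $j$, on the set $\check{A_j}\in\U$, the intersection defining $h(i)$ is a finite intersection of members of the family. It is therefore nonempty, and it is contained in $A_j$, so $h(i)\in A_j$. Hence $[h]_\U\in\hA_j$ for every $j$. This uses exactly the choice of index set $I$ and of $\F$, and it also explains why the properness witness above works.
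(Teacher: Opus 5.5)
\textbf{The properness step fails.} Your witness $g(i):=\min\bigcap i$ is the constant $1$ modulo $\U$. Since $\emptyset\subseteq\N$, the set $\check{\emptyset}=\{i\in I\mid \emptyset\in i\}$ belongs to $\F\subseteq\U$. The same happens with $\check A\cap\check B$ for any disjoint $A,B\subseteq\N$. For every $i\in\check{\emptyset}$ one has $\bigcap i=\emptyset$, so $g(i)=1$. Hence $\{i\in I\mid g(i)=1\}\in\U$ and $\nu=[g]_\U=1$. Your claim that $g(i)>n$ on $\check{(n,+\infty)}$ fails exactly at the indices with $\bigcap i=\emptyset$, and these form a $\U$-large set. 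Your closing remark, that the enlargement argument ``explains why the properness witness works,'' misses the one difference that matters. In the enlargement step you intersect only those members of $i$ that belong to the given family. Here you intersect all members of $i$, and some of them are pairwise disjoint.

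The repair is immediate. You can take $g(i):=1+\max\{n\mid (n,+\infty)\in i\}$, with $g(i):=1$ if no such $n$ exists. Alternatively take $g(i):=|i|$: for each $n$, distinct $A_1,\ldots,A_n\subseteq\N$ give $\check A_1\cap\cdots\cap\check A_n\subseteq\{i\mid |i|\ge n\}$. Or simply apply your own (correct) enlargement argument to $\{(n,+\infty)\mid n\in\N\}$, which is how the paper gets infinite numbers.

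\textbf{The rest of the plan is sound.} For transfer it actually does more than the paper. The paper only checks, as an example, that $\N^I\!/\U$ is the positive part of a discretely ordered ring, and refers to the literature. Your induction on formulas is the actual proof of \L{}os' Theorem. In the atomic case for $A\subseteq\N^d$, use the joint condition $\{i\mid (f_1(i),\ldots,f_d(i))\in A\}\in\U$. The coordinatewise condition printed in Definition \ref{def-extension} does not make sense for $d\ge 2$.

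\textbf{On enlargement.} The sentence ``since $|\PP(\N)|=\mathfrak{c}$ we may assume the sets are subsets of $\N$'' does not follow. The paper uses the cardinality bound differently: it takes a surjection $\psi:\PP(\N)\to\G$ and picks $a_i\in\bigcap_{A\in i}\psi(A)$. This handles any family of size at most $\mathfrak{c}$ and automatically avoids empty intersections, since every member of $i$ is sent into $\G$. Your variant is correct, but it requires the $A_j$ to be subsets of $\N$ so that $\check{A_j}$ is defined.
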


\begin{proof}
We first observe that the family $\F$ has the finite intersection property,
and so there actually are ultrafilters $\U$ extending $\F$.
This is easily seen by noticing that for all $A_1,\ldots,A_k\subseteq\N$,
the index $i:=\{A_1,\ldots,A_k\}\in\bigcap_{s=1}\check{A}_s$.

\smallskip
The \emph{extension} principle is directly the above Definition \ref{def-extension}.
Let us verify that $\mathfrak{c}$-\emph{enlargement} holds.

Let $\G$ be a family of sets of cardinality $|\G|\le\mathfrak{c}$
with the finite intersection property. Since $|\PP(\N)|=\mathfrak{c}$,
there exists an onto function $\psi:\PP(\N)\twoheadrightarrow\G$.
By the finite intersection property of $\G$,
for every $i\in I$ we can pick an element
$a_i\in\bigcap_{A\in i}\psi(A)$. Then let
$$\nu:=[(a_i\mid i\in I)]_\U\in\hN$$
be the element of the ultrapower given by the $\U$-equivalence class 
of the sequence $(a_i\mid i\in I)$. 
We claim that $\nu\in\bigcap_{G\in\G}{}^*G$.
To see this, notice first that since $\psi$ is onto, 
we have $\bigcap_{G\in\G}{}^*G=\bigcap_{A\subseteq\N}{}^*(\psi(A))$.
Recall that ${}^*(\psi(A))=[(\psi(A)\mid i\in I)]_\U$ is the $\U$-equivalence
class of the constant sequence with value $\psi(A)$.
By definition, for every $A\subseteq\N$ and for every $i\in\check{A}$, we have $A\in i$,
and so $a_i\in \psi(A)$. This shows that 
$\check{A}\subseteq\{i\in I\mid a_i\in \psi(A)\}\in\U$, and hence
$\nu\in{}^*(\psi(A))$.

\smallskip
To give a complete proof of the \emph{transfer principle}
one would need to correctly formalize its statement, and this requires
the formal notion of first-order formula. 
Since going into specific logical aspects is outside the scope of this paper,
we content ourselves to give a flavor of its content and so,
as an example, we will only give here a detailed proof that the
ultrapower $\hN=\N^I\!/\U$ is the positive part of a discretely ordered ring.\footnote
{The interested reader in a precise formulation of the \emph{transfer} principle
and its formal proof is refereed to \cite[Chapter 2]{dgl}, and references therein.}

For simplicity, in the following we write $[f]$ instead of $[f]_\U$ to
denote the $\U$-equivalence of a function $f:I\to\N$.
Besides, we identify each $n\in\N$ with the $\U$-equivalence class of the constant
sequence $c_n:i\mapsto n$ with value $n$, so that $\N\subseteq\hN$.

According to Definition \ref{def-extension}, the sum $\boldsymbol{+}$, product $\boldsymbol{\cdot}$, 
and order $\boldsymbol{<}$ on the ultrapower $\hN=\N^I\!/\U$ are defined as follows, respectively:\footnote
{For simplicity, we used boldface symbols for the sum, product, and order, instead of putting stars.}
\begin{itemize}
\item
$[f]\boldsymbol{+}[g]:=[f+g]$ where $f+g:i\mapsto f(i)+g(i)$ is 
the sum of the functions $f$ and $g$ defined point-wise.
\item
$[f]\boldsymbol{\cdot}[g]:=[f\cdot g]$ where $f\cdot g:i\mapsto f(i)\cdot g(i)$ is the 
product of the functions $f$ and $g$ defined point-wise.
\item
$[f]\boldsymbol{<}[g]\Leftrightarrow\{i\in I\mid f(i)<g(i)\}\in\U$.
\end{itemize}

Commutativity, associativity and distributivity of the operations directly follow
from the above definitions and the fact that they hold ``point-wise" in $\N$. 
As an example, let us see
associativity of the sum in detail. 
Let $[f],[g],[h]\in\hN$ be the elements in the ultrapower that are the
$\U$-equivalence classes of the functions $f,g,h:I\to\N$, respectively. Then
$([f]\boldsymbol{+}[g])\boldsymbol{+}[h]=[f+g]\boldsymbol{+}[h]=[(f+g)+h]=[f+(g+h)]=
[f]\boldsymbol{+}[g+h]=[f]\boldsymbol{+}([g]\boldsymbol{+}[h])$.

Let us now see that the relation $\boldsymbol{<}$ is a linear order on $\hN$.
Note that $\boldsymbol{<}$ is irreflexive; in fact for every $[f]\in\hN$, the relation
$[f]\boldsymbol{<}[f]$ does not hold since $\{i\in I\mid f(i)<f(i)\}=\emptyset\notin\U$.
As for transitivity, assume that $[f]\boldsymbol{<}[g]$ and $[g]\boldsymbol{<}[h]$,
\emph{i.e.}, $\Lambda:=\{i\in I\mid f(i)<g(i)\}\in\U$ and $\Gamma:=\{i\in I\mid g(i)<h(i)\}\in\U$.
Then $[f]<[h]$ because $\{i\in I\mid f(i)<h(i)\}\supseteq\Lambda\cap\Gamma\in\U$.
Finally, given $[f],[g]\in\hN$, let $A_1:=\{i\in I\mid f(i)<g(i)\}$,
$A_2:=\{i\in I\mid f(i)=g(i)\}$, and $A_3=\{i\in I\mid g(i)<f(i)\}$.
Then we have the partition $I=A_1\cup A_2\cup A_3\in\U$, and hence one
and only one of the sets $A_i\in\U$, \emph{i.e.},
one and only one of the relations $[f]\boldsymbol{<}[g]$,
$[f]=[g]$, or $[g]\boldsymbol{<}[f]$ holds.

The compatibility of the order relation with the operations
directly follows from the corresponding ``point-wise" property.

The fact that $\hN$ is additively cancellative, \emph{i.e.},
if $[f]\boldsymbol{+}[g]=[f]\boldsymbol{+}[g']$ then $[g]=[g']$
is straightforward by noticing that
$f(i)+g(i)=f(i)+g'(i)\Leftrightarrow g(i)=g'(i)$ for every $i\in I$.
To complete the proof that $\hN$ is the positive part of an ordered ring,
we are left to show that if $[f]\boldsymbol{<}[g]$ then there exists $[h]$
such that $[f]\boldsymbol{+}[h]=[g]$. By the hypothesis,
$\Lambda:=\{i\in I\mid f(i)<g(i)\}\in\U$. Define $h:I\to\N$ by setting
$h(i)=g(i)-f(i)$ if $i\in\Lambda$, and $h(i)=1$ otherwise.
Then $[f]\boldsymbol{+}[h]=[f+h]=[g]$ since the set
$\{i\in I\mid f(i)+h(i)=g(i)\}=\Lambda\in\U$.
\end{proof}

%
%
%

\end{document}